\newtheorem{thm}{Theorem}[section]
\newtheorem{defn}[thm]{Definition}
\newtheorem{corollary}[thm]{Corollary}
\newtheorem{lemma}[thm]{Lemma}
\newtheorem{remark}[thm]{Remark}
\newcommand{\pf}{\noindent{\bf Proof.} }
\def\qed{{\hfill $\Box$ \bigskip}}
\def\Xint#1{\mathchoice
{\XXint\displaystyle\textstyle{#1}}%
{\XXint\textstyle\scriptstyle{#1}}%
{\XXint\scriptstyle\scriptscriptstyle{#1}}%
{\XXint\scriptscriptstyle\scriptscriptstyle{#1}}%
\!\int}
\def\XXint#1#2#3{{\setbox0=\hbox{$#1{#2#3}{\int}$}
\vcenter{\hbox{$#2#3$}}\kern-.5\wd0}}
\def\dashint{\Xint-}
\newcommand\aint{-\hspace{-0.38cm}\int}
\newcommand\cbrk{\text{$]$\kern-.15em$]$}}
\newcommand\opar{\text{\,\raise.2ex\hbox{${\scriptstyle
|}$}\kern-.34em$($}}
\newcommand\cpar{\text{$)$\kern-.34em\raise.2ex\hbox{${\scriptstyle |}$}}\,}
\def\wh{\widehat}
\def\<{\langle}
\def\>{\rangle}
\def\eps{\varepsilon}
\def\E{{\mathbb E}}
\newcommand\bL{\mathbb{L}}
\newcommand\bR{\mathbb{R}}
\newcommand\bH{\mathbb{H}}
\newcommand\bD{\mathbb{D}}
\newcommand\bS{\mathbb{S}}
\newcommand\bM{\mathbb{M}}
\newcommand\bE{\mathbb{E}}
\newcommand\bN{\mathbb{N}}
\newcommand\bP{\mathbb{P}}
\newcommand\cI{\mathcal{I}}
\newcommand\cA{\mathcal{A}}
\newcommand\cB{\mathcal{B}}
\newcommand\cF{\mathcal{F}}
\newcommand\cG{\mathcal{G}}
\newcommand\cH{\mathcal{H}}
\newcommand\cL{\mathcal{L}}
\newcommand\cP{\mathcal{P}}
\newcommand\cM{\mathcal{M}}
\newcommand\cQ{\mathcal{Q}}
\def\R {{\mathbb R}}
\def\wh{\widehat}
\newcommand{\mysection}[1]{\section{#1}
\setcounter{equation}{0}}
\begin{document}

\title[Parabolic littlewood-paley inequaltiy]
{Parabolic  Littlewood-Paley   inequality for   $\phi(-\Delta)$-type operators and applications to Stochastic integro-differential equations}

\author{Ildoo Kim}
\address{Department of Mathematics, Korea University, 1 Anam-dong, Sungbuk-gu, Seoul,
136-701, Republic of Korea} \email{waldoo@korea.ac.kr}

\author{Kyeong-Hun Kim}
\address{Department of Mathematics, Korea University, 1 Anam-dong,
Sungbuk-gu, Seoul, 136-701, Republic of Korea}
\email{kyeonghun@korea.ac.kr}
\thanks{The research of the second
author was supported by Basic Science Research Program through the
National Research Foundation of Korea(NRF) funded by the Ministry of
Education, Science and Technology (20120005158)}

\author{Panki Kim}
\address{Department of Mathematical Sciences and Research Institute of Mathematics,
Seoul National University,
Building 27, 1 Gwanak-ro, Gwanak-gu
Seoul 151-747, Republic of Korea.}
\email{pkim@snu.ac.kr}
\thanks{The research of the third
author was supported by Basic
Science Research Program through the National Research Foundation of
Korea(NRF) grant funded by the Korea
government(MEST)(2011-000251)}

\subjclass[2010]{42B25, 26D10, 60H15, 60G51, 60J35}

\keywords{Parabolic Littlewood-Paley inequality, Stochastic  partial differential  equations, Integro-differential operators, L\'evy processes, Estimates of transition functions}

\begin{abstract}
In this paper we prove a  parabolic version of the Littlewood-Paley  inequality  (\ref{littlewood-paley bernstein})
for the operators of the type $\phi(-\Delta)$, where $\phi$ is a  Bernstein function. As an application, we construct an $L_p$-theory for  the stochastic integro-differential equations of the type $du=(-\phi(-\Delta)u+f)\,dt +g\,dW_t$.

\end{abstract}

\maketitle

\mysection{Introduction}

The operators  we are  considering in this article are certain functions of the Laplacian. To be more precise,  recall that a function $\phi:(0,\infty)\to (0,\infty)$ such that $\phi(0+)=0$ is called a Bernstein function if it is of the form
\begin{equation*}\label{e:bernstein-function}
\phi(\lambda)=b \lambda +\int_{(0,\infty)}(1-e^{-\lambda t})\, \mu(dt)\, ,\quad \lambda >0\, ,
\end{equation*}
where $b\ge 0$ and $\mu$ is a measure on $(0,\infty)$ satisfying $\int_{(0,\infty)}(1\wedge t)\, \mu(dt)<\infty$, called the L\'evy measure.  By Bochner's functional calculus, one can define the operator
$\phi(\Delta):=-\phi(-\Delta)$ on
$C_b^2(\R^d)$,  which turns out to be an integro-differential operator
\begin{equation}\label{e:phirep}
b \Delta f(x)+\int_{\R^d}\left(f(x+y)-f(x)-\nabla f(x) \cdot y {\mathbf 1}_{\{|y|\le 1\}}\right)\, J(y)\, dy \, ,
\end{equation}
where $J(x)=j(|x|)$ with $j:(0,\infty)\to (0,\infty)$ given by
$$
j(r)=\int_0^{\infty} (4\pi t)^{-d/2} e^{-r^2/(4t)}\, \mu(dt)\, .
$$

It is also known that the operator  $\phi(\Delta)$ is the infinitesimal generator of  the $d$-dimensional subordinate Brownian motion. Let
 $S=(S_t)_{t\ge 0}$ be a subordinator  (i.e. an
increasing L\'evy process satisfying $S_0=0$) with Laplace exponent $\phi$,
and let $W=(W_t)_{t\ge 0}$ be a Brownian motion in $\R^d$, $d\ge 1$,  independent of $S$ with
$\E_x\left[e^{i\xi(W_t-W_0)}\right]=e^{-t{|\xi|^2}}, \xi\in \R^d, t>0$. Then
     $X_t:=W_{S_t}$, called the subordinate Brownian motion,   is a rotationally invariant L\'evy process in $\R^d$
with characteristic exponent $\phi(|\xi|^2)$, and for
 any $f\in C^2_b(\bR^d)$
\begin{equation}\label{e:ig}
\phi(\Delta)f(x)=\lim_{t \to 0} \frac{1}{t}[\bE_x f(X_t)-f(x)].
\end{equation}
For instance, by taking $\phi(\lambda)= \lambda^{\alpha/2}$ with $\alpha \in (0, 2)$, we get the fractional laplacian $\Delta^{\alpha/2}:=-(-\Delta)^{\alpha/2}$  which is
 the infinitesimal generator of a rotationally
symmetric $\alpha$-stable process in $\R^d$.

 In this article we  prove a  parabolic Littlewood-Paley inequality for $\phi(\Delta)$:
\begin{thm}
                                           \label{main theorem}
Let $\phi$ be a Bernstein function, $T_t$ be the semigroup corresponding to $\phi(\Delta)$ and  $H$ be  a Hilbert space. Suppose that $\phi$ satisfies

\noindent
{\bf (H1)}$:$ $\exists$ constants $0<\delta_1\le \delta_2 <1$ and $a_1, a_2>0$  such that
\begin{equation*}\label{e:H1}
a_1\lambda^{\delta_1} \phi(t) \le \phi(\lambda t) \le a_2 \lambda^{\delta_2} \phi(t), \quad \lambda \ge 1, t \ge 1\, ;
\end{equation*}
\noindent
{\bf (H2)}$:$ $\exists$  constants $0<\delta_3 \le 1$ and $a_3>0$  such that
\begin{equation*}\label{e:H2}
 \phi(\lambda t) \le a_3 \lambda^{\delta_3} \phi(t), \quad \lambda \le 1, t \le 1\, .
\end{equation*}
Then for any $p\in [2,\infty), \, T\in (0,\infty)$ and  $f\in   C_0^\infty (\bR^{d+1},H)$,
\begin{eqnarray}
                     \label{littlewood-paley bernstein}
\int_{\bR^d} \int_0^T [\int_{0}^t |\phi(\Delta)^{1/2}T_{t-s}f(s,\cdot)(x)|_H^2 ds]^{p/2}dt dx \leq N
\int_{\bR^d} \int_0^T |f(t,x)|^p_H~dtdx,
\end{eqnarray}
where the constant $N$ depends only on
$d,p,T, a_i$ and $\delta_i$ $(i=1,2,3)$.
\end{thm}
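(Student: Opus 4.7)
The plan is to recognize the left-hand side as the $L^p$ norm of a vector-valued singular integral operator and apply a Calder\'on-Zygmund type argument. Define the operator
\begin{equation*}
\cT f(t,x) \;:=\; \bigl( s\mapsto \phi(\Delta)^{1/2} T_{t-s} f(s,\cdot)(x)\,\mathbf{1}_{0<s<t}\bigr),
\end{equation*}
which maps $H$-valued functions on $\bR\times\bR^d$ into $\frH:=L_2(\bR;H)$-valued functions. The inequality \eqref{littlewood-paley bernstein} (after extending $f$ by zero outside $[0,T]$) is exactly the statement that $\cT$ is bounded from $L_p(\bR^{d+1};H)$ to $L_p(\bR^{d+1};\frH)$. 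The kernel of $\cT$ is the operator-valued function
\begin{equation*}
\cK(t-s,x-y)(\tau) \;=\; \phi(\Delta)^{1/2} p_{t-s}(x-y)\,\mathbf{1}_{0<\tau<t-s},
\end{equation*}
where $p_r$ is the transition density of the subordinate Brownian motion $X_r$, and the time-slot $\tau$ encodes the inner variable of $\frH$.

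The first step is the $L_2$ case. By Plancherel in the spatial variable, the $L_2$ norm of $\cT f$ equals
\begin{equation*}
\int_{\bR^d}\int_0^T |\wh f(s,\xi)|_H^2\,\phi(|\xi|^2)\int_s^T e^{-2(t-s)\phi(|\xi|^2)}\,dt\,ds\,d\xi \;\le\; \tfrac12 \|f\|_{L_2}^2,
\end{equation*}
since $\phi(|\xi|^2)\int_0^\infty e^{-2u\phi(|\xi|^2)}du=\tfrac12$; this uses only that $\phi$ is a Bernstein function and $p=2$.

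The second and main step is to pass from $L_2$ to $L_p$ for $p>2$ via the vector-valued Calder\'on-Zygmund theorem applied on $\bR^{d+1}$ with the parabolic quasi-metric
\begin{equation*}
\rho\bigl((t,x),(s,y)\bigr) \;=\; \phi(|x-y|^{-2})^{-1/2} \vee |t-s|^{1/2}\cdot \text{(suitable rescaling)},
\end{equation*}
adapted to the anisotropy produced by $\phi(\Delta)$. It suffices to verify H\"ormander's integrability condition
\begin{equation*}
\int_{\rho((t,x),0) \ge c\rho((0,y),0)} \|\cK(t,x-y)-\cK(t,x)\|_{\frH}\,dt\,dx \;\le\; C,
\end{equation*}
and its time-variable analogue. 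This reduces to pointwise estimates on $\phi(\Delta)^{1/2} p_r(x)$ and its spatial gradient. Here the two scaling assumptions \textbf{(H1)} and \textbf{(H2)} enter crucially: they imply matching upper/lower bounds for $\phi$ at large and small arguments and allow one to compare $\phi(|\xi|^2)$ with a power, yielding sharp heat-kernel bounds of the type
\begin{equation*}
|\phi(\Delta)^{1/2}p_r(x)| \;\lesssim\; \phi^{-1}(r^{-1})^{d/2}\phi(\phi^{-1}(r^{-1}))^{1/2}\Bigl(1\wedge \tfrac{r\,J(|x|)}{\phi^{-1}(r^{-1})^{d/2}\phi(\phi^{-1}(r^{-1}))^{1/2}}\Bigr),
\end{equation*}
and a corresponding gradient bound. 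These pointwise bounds, integrated against the square-function weight $r\in(0,\infty)$ in the $\frH$ norm, must be shown to yield the H\"ormander condition over the parabolic exterior regions.

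The principal obstacle will be establishing the H\"ormander-type cancellation estimate uniformly in the subordinator $\phi$ under only the doubling-type assumptions \textbf{(H1)} and \textbf{(H2)}; in the pure-power case $\phi(\lambda)=\lambda^{\alpha/2}$ one recovers Mikulevicius--Pragarauskas type bounds by self-similarity, but in the general Bernstein setting one must replace scaling by the $\phi^{-1}$-parabolic geometry and carry out the splitting into near-diagonal/off-diagonal regimes with respect to $\phi^{-1}(r^{-1})$. Once the spatial and temporal H\"ormander conditions are verified, the vector-valued Banach-space Calder\'on-Zygmund theorem (with values in the Hilbert space $\frH$) promotes the $L_2$ bound to an $L_p$ bound for all $p\in[2,\infty)$, giving \eqref{littlewood-paley bernstein} (the $T$-dependence of $N$ appearing from integrating the kernel over the bounded time interval rather than all of $\bR$).
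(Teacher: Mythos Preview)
Your $L_2$ step via Plancherel is exactly what the paper does (Lemma~\ref{2-1}). For $p>2$, however, the paper takes a different route: rather than a Calder\'on--Zygmund/H\"ormander argument, it estimates the sharp function $(\hat\cG f)^{\#}$ pointwise in terms of iterated maximal functions of $|f|_H^2$ (Lemmas~\ref{co1}--\ref{2-5}), and then invokes the Fefferman--Stein inequality (Theorem~\ref{fst}) on the space of homogeneous type built from the parabolic cubes $Q_c(r,z)=(r-\phi(c^{-2})^{-1},r+\phi(c^{-2})^{-1})\times\hat B_c(z)$. The kernel estimates underlying either approach are essentially the same---the pointwise bounds on $\phi(\Delta)^{n/2}D^\beta p(t,x)$ derived in Section~\ref{s:3}---but the sharp-function route packages them as mean-oscillation bounds on each cube rather than as a H\"ormander condition for an $\frH$-valued kernel.

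As written, your proposal is an outline rather than a proof: you state that the H\"ormander condition ``must be shown'' and flag it as ``the principal obstacle,'' but do not carry it out, and that is precisely where all the work lies. Several concrete gaps: (i)~your quasi-metric $\rho$ is not actually defined---you write ``suitable rescaling'' without specifying it; the correct geometry is the one the paper uses, pairing spatial scale $c$ with time-scale $\phi(c^{-2})^{-1}$, and one must verify the doubling/engulfing properties from {\bf (H1)}--{\bf (H2)} via \eqref{e:sc1}. (ii)~Standard Calder\'on--Zygmund theory with the H\"ormander condition yields weak~$(1,1)$ and hence $L_p$ for $1<p<2$; to reach $p>2$ you need either the adjoint H\"ormander condition or an $L^\infty\to\mathrm{BMO}$ estimate, and you do not address which one or how the inner time variable in $\frH=L_2(\bR;H)$ interacts with it. (iii)~The kernel bound you display is neither proved nor in a directly usable form; the paper devotes Sections~\ref{s:p}--\ref{s:3} to deriving the needed estimates (Lemmas~\ref{le1} and~\ref{le3}), and these do not follow simply from ``comparing $\phi(|\xi|^2)$ with a power.'' Your framework is a legitimate alternative---it is essentially the approach of \cite{CL} for the pure fractional Laplacian---but absent exact scaling the H\"ormander verification is every bit as delicate as the paper's mean-oscillation estimates in Lemmas~\ref{2-4}--\ref{2-5}, and you have not supplied it.
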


{\bf (H1)} is a condition on the asymptotic behavior of $\phi$ at infinity and it governs the behavior of the corresponding subordinate Brownian motion $X$ for small time and small space. {\bf (H2)} is a condition about the asymptotic behavior of $\phi$ at zero and it governs the behavior of the corresponding subordinate Brownian motion $X$ for large time and large space.
Note that it follows from the second inequality in {\bf (H1)} that $\phi$ has no drift, i.e., $b=0$ in \eqref{e:phirep}. It also follows from  {\bf (H2)} that $\phi(0+)=0$.

Using the tables at the end of \cite{SSV}, one can construct a lot of explicit
examples of Bernstein functions satisfying {\bf (H1)}--{\bf (H2)}.
Here are a few of them:
\begin{itemize}
\item[(1)] $\phi(\lambda)=\lambda^\alpha + \lambda^\beta$, $0<\alpha<\beta<1$;
\item[(2)] $\phi(\lambda)=(\lambda+\lambda^\alpha)^\beta$, $\alpha, \beta\in (0, 1)$;
\item[(3)] $\phi(\lambda)=\lambda^\alpha(\log(1+\lambda))^\beta$, $\alpha\in (0, 1)$,
$\beta\in (0, 1-\alpha)$;
\item[(4)] $\phi(\lambda)=\lambda^\alpha(\log(1+\lambda))^{-\beta}$, $\alpha\in (0, 1)$,
$\beta\in (0, \alpha)$;
\item[(5)] $\phi(\lambda)=(\log(\cosh(\sqrt{\lambda})))^\alpha$, $\alpha\in (0, 1)$;
\item[(6)] $\phi(\lambda)=(\log(\sinh(\sqrt{\lambda}))-\log\sqrt{\lambda})^\alpha$, $\alpha\in (0, 1)$.
\end{itemize}
For example, the subordinate Brownian motion corresponding to the example (1) $\phi(\lambda)=\lambda^\alpha + \lambda^\beta$ is the sum of two independent symmetric $\alpha$ and $\beta$ stable processes. In this case the characteristic exponent is
$$
\Phi(\theta)=|\theta|^{\alpha}+|\theta|^{\beta}\, , \ \theta \in \R^d\, ,\quad 0<\beta <\alpha <2\, ,
$$
and  its infinitesimal generator is $-(-\Delta)^{\beta/2}-(-\Delta)^{\alpha/2}$.

We remark here that relativistic stable processes satisfy {\bf (H1)}--{\bf (H2)} with $\delta_3=1$;
Suppose that $\alpha\in (0, 2)$, $m >0$ and define
$$
\phi_m(\lambda)=(\lambda+m^{2/\alpha})^{\alpha/2}-m.
$$
The subordinate Brownian motion corresponding to $\phi_m$ is
a relativistic $\alpha$-stable process
on $\bR^d$ with mass $m$ whose characteristic
function is given by
$$
 \exp(-t ( (|\xi|^2+ m^{2/\alpha}
    )^{\alpha/2}-m) ), \qquad \xi \in \bR^d.
$$
 The infinitesimal generator is
$m-(m^{2/\alpha} -\Delta)^{\alpha /2}$.

Note that when $m=1$, this
infinitesimal generator reduces to $1-(1 -\Delta)^{\alpha /2}$. Thus
the $1$-resolvent kernel of  the relativistic $\alpha$-stable
process with mass $1$ on $\bR^d$  is just the Bessel potential kernel. When $\alpha=1$, the
infinitesimal generator reduces to the so-called free relativistic
Hamiltonian $ m - \sqrt{-\Delta + m^{2}}$. The operator $m -
\sqrt{-\Delta + m^{2}}$ is very important in mathematical physics
due to its application to relativistic quantum mechanics.  We emphasize that the present article covers this  case.

\vspace{1mm}

 The parabolic Littlewood-Paley inequality  (\ref{littlewood-paley bernstein}) was first proved by Krylov (\cite{Kr01, kr94}) for the case $\phi(\Delta)=\Delta$ with $N=N(p)$ depending only on $p$.
  In this case,  if $f$ depends only on $x$ and $H=\bR$ then (\ref{littlewood-paley bernstein}) leads to the
  the classical (elliptic) Littlewood-Paley
inequality (cf. \cite{Ste}):
\begin{equation}
\int_{\bR^d} \left(\int^{\infty}_0 |\nabla T_{t}
f|^2dt\right)^{p/2} dx\leq N(p)\|f\|^p_p, \quad \quad \forall \, \, f\in L_p(\bR^d). \nonumber
\end{equation}

\noindent

Recently, (\ref{littlewood-paley bernstein}) was  proved for the fractional Laplacian $\Delta^{\alpha/2}$, $\alpha\in (0,2)$, in \cite{CL, Ildoo}. Also, in \cite{MP} similar result was proved for the case $J=J(t,y)=m(t,y)|y|^{-d-\alpha}$ in (\ref{e:phirep}), where $\alpha\in (0,2)$ and $m(t,y)$ is a bounded smooth function satisfying
$m(t,y)=m(t,y|y|^{-1})$ (i.e. homogeneous of degree zero) and $m(t,y)>c>0$ on a set $\Gamma \subset S^{d-1}$ of a positive Lebesgue measure.
We note that even the case $\phi(\lambda)=\lambda^{\alpha}+\lambda^{\beta}$ ($\alpha\neq \beta$) is not covered in \cite{MP}.

\vspace{1mm}

Our motivation of studying (\ref{littlewood-paley bernstein}) is that  (\ref{littlewood-paley bernstein}) is the key estimate for the  $L_p$-theory of the corresponding stochastic partial differential equations. For example, Krylov's result
(\cite{Kr01, kr94}) for $\Delta$ is related to the $L_p$-theory of the second-order stochastic partial differential equations. Below we briefly explain the reason for this. See \cite{KK, Kr99} or Section \ref{section application} of this article for more details.
Consider the stochastic integro-differential equation
\begin{equation}
                 \label{eqn 0}
 du=(\phi(\Delta) u +h) \,dt +\sum_{k=1}^{\infty}f^kdw^k_t,
\quad u(0,x)=0.
\end{equation}
 Here
 $f=(f^1,f^2,\cdots)$ is an
$\ell_2$-valued random function of $(t,x)$, and $w^k_t$ are
independent one-dimensional Wiener processes defined on a probability space $(\Omega,P)$. Considering $u-w$, where $w(t):=\int^t_0 T_{t-s}h(s)ds$, we may assume $h=0$ (see Section \ref{section application}).
It turns out that if
$f=(f^1,f^2,\cdots)$ satisfies certain measurability condition, the
solution of this problem is given by
\begin{equation}
u(t,x)= \sum_{k=1}^{\infty}\int^t_0
T_{t-s}f^k(s,\cdot)(x) dw^k_s. \nonumber
\end{equation}
 By Burkholder-Davis-Gundy inequality,
\begin{eqnarray}
&&\bE \int^T_0\|\phi(\Delta)^{1/2}u(t,\cdot)\|^p_{L_p}dt \nonumber \\
&\leq&
N(p)\, \bE
\int^T_0\int_{\bR^d}\left[\int^t_0|\phi(\Delta)^{1/2}T_{t-s}f(s,\cdot)(x)|^2_{\ell_2}  \label{eqn 333}
ds\right]^{p/2}dxdt.
\end{eqnarray}
Actually if $f$ is not random, then $u$ becomes a  Gaussian process and  the reverse inequality of (\ref{eqn 333}) also
holds. Thus to prove $\phi(\Delta)^{1/2}u\in L_p$ and to get a
legitimate start of the $L_p$-theory of equation (\ref{eqn 0}),
one has to estimate the right-hand side of (\ref{eqn 333}) (or the left-hand side of (\ref{littlewood-paley bernstein})). We will also see that (\ref{littlewood-paley bernstein}) yields  the uniqueness and existence  of   equation (\ref{eqn 0}) in certain Banach spaces.

\vspace{1mm}

 The key of our approach  is  estimating the  sharp function $(v)^{\sharp}(t,x)$  of  $v(t,x):=[\int_{0}^t |\phi(\Delta)^{1/2}T_{t-s}f(s,\cdot)(x)|_H^2 ds]^{1/2}$:
\begin{equation}
              \label{eqn sharp}
 (v)^{\sharp}(t,x):=\sup_{(t,x)\in \cQ}\aint_{\cQ}|v-v_{\cQ}|dtdx,
 \end{equation}
 where $v_{\cQ} :=\aint_{\cQ} v\;dxdt$ is the average of $v$ over $Q$ and
the supremum is taken for all cubes $Q$  containing $(t,x)$ of the type $\cQ_c(r,y):=(r-\phi(c^{-2})^{-1},r+\phi(c^{-2})^{-1})\times B_c(y)$.
We control $(v)^{\sharp}(t,x)$ in terms of the  maximal functions of $|f|_{H}$, and then apply
    Fefferman-Stein and Hardy-Littlewood theorems to prove (\ref{littlewood-paley bernstein}).
    The operators considered in \cite{Kr01, Ildoo, MP} have  simple scaling properties, and so to estimate the mean oscillation $\aint_{\cQ}
|v-v_{\cQ}|dtdx$ in (\ref{eqn sharp}), it was enough to consider the only case $\cQ=\cQ_1(0,0)$, that is the case $c=1$ and $(r,y)=(0,0)$.
However, in our case, due to the lack of the scaling property,  it is needed to consider the mean oscillation  $\aint_{\cQ}
|v-v_{\cQ}|dtdx$ on every $Q_c(r,y)$ containing $(t,x)$. This causes serious difficulties as can be seen in the proofs of Lemmas \ref{co1}--\ref{2-5}.
 Our estimation of $\aint_{\cQ}
|v-v_{\cQ}|dtdx$  relies on the upper bounds of $\phi(\Delta)^{n/2}D^{\beta} p(t,x)$, which are  obtained in this article. Here  $\beta$ is an arbitrary multi-index, $n=0,1,2,\cdots$ and $p(t,x)$ is the density of the semigroup $T_t$ corresponding to $\phi(\Delta)$.

\vspace{1mm}

The article is organized as follows. In Section \ref{s:p}
we give upper bounds of the density $p(t,x)$. Section \ref{s:2} contains various properties of
Bernstein functions and subordinate Brownian motions. In Section \ref{s:3} we establish upper bounds of the fractional derivatives of  $p(t,x)$ in terms of $\phi$. Using these estimates we give the proof of of Theorem \ref{main theorem} in Section \ref{s:proof}. In Section  \ref{section application} we apply Theorem \ref{main theorem} and construct an  $L_p$-theory for equation (\ref{eqn 0}).

\vspace{1mm}

We finish the introduction with some notation. As usual $\bR^{d}$ stands for the Euclidean space of points
$x=(x^{1},...,x^{d})$,  $B_r(x) := \{ y\in \bR^d : |x-y| < r\}$  and
$B_r
 :=B_r(0)$.
 For $i=1,...,d$, multi-indices $\beta=(\beta_{1},...,\beta_{d})$,
$\beta_{i}\in\{0,1,2,...\}$, and functions $u(x)$ we set
$$
u_{x^{i}}=\frac{\partial u}{\partial x^{i}}=D_{i}u,\quad
D^{\beta}u=D_{1}^{\beta_{1}}\cdot...\cdot D^{\beta_{d}}_{d}u,
\quad|\beta|=\beta_{1}+...+\beta_{d}.
$$
We write $u\in C^{\infty}_0(X,Y)$ if $u$ is a $Y$-valued infinitely differentiable function defined on $X$ with compact support. By $C^2_b(\bR^d)$ we denote the space of twice continuously differentiable functions on $\bR^d$ with bounded derivatives up to order $2$.
We use  ``$:=$" to denote a definition, which
is  read as ``is defined to be". We denote $a \wedge b := \min \{ a, b\}$, $a \vee b := \max \{ a, b\}$. If we write $N=N(a, \ldots, z )$,
this means that the constant $N$ depends only on $a, \ldots , z$. The constant $N$ may change  from location to location, even within a line. By $\cF$ and $\cF^{-1}$ we denote the Fourier transform and the inverse Fourier transform, respectively. That is, for a  suitable function $f$,
$\cF(f)(\xi) := \int_{\bR^{d}} e^{-i x \cdot \xi} f(x) dx$ and $\cF^{-1}(f)(x) := \frac{1}{(2\pi)^d}\int_{\bR^{d}} e^{ i\xi \cdot x} f(\xi) d\xi$.
Finally, for a Borel
set $A\subset \bR^d$, we use $|A|$ to denote its Lebesgue
measure.

\mysection{Upper bounds of $p(t,x)$}\label{s:p}

In this section we give upper bounds of the density $p(t,x)$
of the semigroup $T_t$ corresponding to $\phi(\Delta)$.
 We give the result under slightly more general setting.
We will assume that $Y$ is a  rotationally
symmetric L\'evy process with L\'evy exponent
$\Psi_Y(\xi)$.  Because of rotational symmetry, the function $\Psi_Y$ is positive and depends on $|\xi|$ only. Accordingly, by a slight abuse of notation we write $\Psi_Y(\xi)=\Psi_Y(|\xi|)$ and get
\begin{equation}\label{e:Psi}
\E_x\left[e^{i\xi\cdot(Y_t-Y_0)}\right]=e^{-t\Psi_Y(|\xi|)},
\quad \quad \mbox{ for every } x\in \R^d \mbox{ and } \xi\in \R^d.
\end{equation}
We assume that the transition probability $\bP(Y_t \in dy)$ is absolutely continuous with respect to Lebesgue measure in $\R^d$.
Thus there is a function $p_Y(t,r)$, $t>0, r \ge 0$ such that
$$
\bP(Y_t \in dy) = p_Y(t,|y|)dy.$$

Note that $r \to \Psi_Y(r)$ and $r \to p_Y(t,r)$ may not be monotone in general.
We first consider the following mild condition on $\Psi_Y$.
\medskip

\noindent
{\bf (A1)}:
There exists a positive function $h$ on $[0, \infty)$  such that  for every $t, \lambda  >0$
$$
{\Psi_Y(\lambda t)}/{\Psi_Y(t)} \le h(\lambda) \quad \text{and} \quad  \int_0^\infty  e^{-r^2/2} r^{d-1}h(r ) dr <\infty.
$$

\medskip

Note that by Lemma \ref{l:phi-property} below, {\bf (A1)}  always holds with
$h(\lambda)=1 \vee \lambda^2$ for every subordinate Brownian motion. Moreover, by
\cite[Lemma 3 and Proposition 11]{G}, {\bf (A1)}  always holds with
$h(\lambda)=24(1 +\lambda^2)$ for rotationally
symmetric unimodal L\'evy process (i.e.,  $r \to p_Y(t,r)$ is decreasing for all $t>0$).

Recall that
$$e^{-|z|^2}=(4\pi)^{-d/2} \int_{\R^d}e^{i\xi \cdot z}e^{-|\xi|^2/4}d \xi.$$
Using this and \eqref{e:Psi} we have for $\lambda>0$
\begin{align}
&\E_0[e^{-\lambda |Y_t|^2}]=(4\pi)^{-d/2} \int_{\R^d}\E_0[ e^{i \sqrt \lambda \xi \cdot Y_t}]e^{-|\xi|^2/4}d \xi \nonumber\\
&=(4\pi)^{-d/2} \int_{\R^d} e^{-t\Psi_Y( \sqrt \lambda|\xi|)} e^{-|\xi|^2/4}d \xi. \label{e:PP1}
\end{align}
Thus
\begin{align}
 \E_0 [e^{-\lambda|Y_t|^2} - e^{-2\lambda|Y_t|^2}]
=(4\pi)^{-d/2} \int_{\R^d} (e^{-t\Psi_Y( \sqrt \lambda|\xi|)}-e^{-t\Psi_Y( \sqrt {2\lambda}|\xi|)}) e^{-|\xi|^2/4}d\xi. \label{e:PP0}
\end{align}
For $t, \lambda>0$, let
$$
g_t(\lambda):=\int_0^\infty (e^{-t\Psi_Y( \sqrt \lambda r)}-e^{-t\Psi_Y( \sqrt {2 \lambda} r)}) e^{-r^2/4} r^{d-1} dr,
$$
which is positive by \eqref{e:PP0}.
\begin{lemma}\label{l:pub1} Suppose that {\bf (A1)} holds. Then there exists a constant $N=N(h,d)$ such that for every $t, v  >0$
$$
g_t( v^{-1}) \le  N t \Psi_Y( v^{-1/2}).
$$
\end{lemma}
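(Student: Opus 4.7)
The plan is to bound the integrand of $g_t(v^{-1})$ pointwise using the elementary Lipschitz estimate $|e^{-a}-e^{-b}|\le |a-b|$ on $[0,\infty)$, and then invoke hypothesis \textbf{(A1)} to extract $\Psi_Y(v^{-1/2})$ as an overall multiplicative factor, leaving behind an integral that is finite in $(h,d)$.

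First, substituting $\lambda=v^{-1}$ into the definition of $g_t$, the Lipschitz estimate applied with $a=t\Psi_Y(v^{-1/2}r)$ and $b=t\Psi_Y(\sqrt{2}\,v^{-1/2}r)$, combined with the triangle inequality and non-negativity of $\Psi_Y$, yields
$$e^{-t\Psi_Y(v^{-1/2}r)}-e^{-t\Psi_Y(\sqrt{2}\,v^{-1/2}r)}\;\le\; t\bigl(\Psi_Y(v^{-1/2}r)+\Psi_Y(\sqrt{2}\,v^{-1/2}r)\bigr).$$
Next, I apply \textbf{(A1)} with base point $v^{-1/2}$ and scaling factors $r$ and $\sqrt{2}r$ (both positive, so the hypothesis applies) to get
$$\Psi_Y(v^{-1/2}r)\le h(r)\Psi_Y(v^{-1/2}),\qquad \Psi_Y(\sqrt{2}\,v^{-1/2}r)\le h(\sqrt{2}r)\Psi_Y(v^{-1/2}).$$
Substituting these bounds and integrating,
$$g_t(v^{-1})\;\le\; t\,\Psi_Y(v^{-1/2})\int_0^\infty\bigl[h(r)+h(\sqrt{2}r)\bigr]e^{-r^2/4}\,r^{d-1}\,dr.$$

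The remaining step is to show the integral on the right is a finite constant $N=N(h,d)$, which follows from the integrability assumption in \textbf{(A1)}, after a change of variable $r=\sqrt{2}u$ to recast the Gaussian weight $e^{-r^2/4}$ into the form $e^{-u^2/2}$ appearing in the hypothesis; this reduces the task to integrals of the form $\int_0^\infty h(cu)\,e^{-u^2/2}\,u^{d-1}\,du$ for $c\in\{\sqrt{2},2\}$. The Lipschitz estimate and the application of \textbf{(A1)} are both routine; the only mild subtlety, and the main technical point of the argument, is the bookkeeping of constants needed to match the two Gaussian weights in this final step.
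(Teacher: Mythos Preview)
Your argument is correct and is essentially the paper's own proof, just organized slightly differently: both use $|e^{-a}-e^{-b}|\le|a-b|\le a+b$ together with {\bf (A1)} and arrive at the same bounding integral $\int_0^\infty (h(r)+h(\sqrt{2}r))e^{-r^2/4}r^{d-1}\,dr$. One small remark on your last step (which the paper also leaves implicit): the finiteness of $\int_0^\infty h(cu)e^{-u^2/2}u^{d-1}\,du$ for $c>1$ does not follow from the integrability clause of {\bf (A1)} alone, but it does once you observe that $h$ may be replaced by the submultiplicative function $\bar h(\lambda):=\sup_{t>0}\Psi_Y(\lambda t)/\Psi_Y(t)\le h(\lambda)$, for which $\bar h(cu)\le \bar h(c)\bar h(u)$.
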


\begin{proof}
By {\bf (A1)} we have
\begin{align*}
&\frac{1}{t \Psi_Y( v^{-1/2})}
\le
\frac{\Psi_Y( \sqrt {2} v^{-1/2} r )+\Psi_Y( v^{-1/2} r)}{\Psi_Y( v^{-1/2})}\frac{1}{t|\Psi_Y( \sqrt {2} v^{-1/2} r )- \Psi_Y( v^{-1/2} r)|}\\
\le&
(h(\sqrt {2} r )+h(r)) \frac{1}{t|\Psi_Y( \sqrt {2} v^{-1/2} r )- \Psi_Y( v^{-1/2} r)|}.
\end{align*}
Thus using the inequality $|e^{-a}-e^{-b}| \le |a-b|$, $a, b >0$
\begin{align*}
&\frac{g_t( v^{-1})}{t \Psi_Y( v^{-1/2})}\\
\le&
\int_0^\infty \frac{|e^{-t\Psi_Y( v^{-1/2} r)}-e^{-t\Psi_Y( \sqrt {2} v^{-1/2} r )}|}{t|\Psi_Y( \sqrt {2} v^{-1/2} r )- \Psi_Y( v^{-1/2} r)|}  e^{-r^2/4} r^{d-1}(h(\sqrt {2} r )+h(r))  dr\\
\le&
\int_0^\infty  e^{-r^2/4} r^{d-1}(h(\sqrt {2} r )+h(r))  dr < \infty.
\end{align*}
Therefore the lemma is proved.
\end{proof}

Recall that $\bP_0( Y_t \in dy)=p_Y(t, |y|)dy$.
We now consider the following mild condition on $p_Y(t, r)$.

\medskip

\noindent
{\bf (A2):}
For every $T \in (0, \infty]$, there exists a constant  $c=c(T) >0$  such that for every $t \in (0, T)$
\begin{align}\label{e:pup}
p_Y(t, r)  \le c p_Y(t, s)  \qquad \forall r \ge s  \ge 0.
\end{align}

\medskip

Obviously \eqref{e:pup} always holds on all $t>0$  for rotationally
symmetric unimodal L\'evy process.

\begin{thm} \label{t:hku}
Suppose that
$Y$ is a  rotationally
symmetric L\'evy process with L\'evy exponent
$\Psi_Y(\xi)$ satisfying
{\bf (A1)}.
Assume that
$
\bP(Y_t \in dy) = p_Y(t,|y|)dy$ and {\bf (A2)} holds.
Then  for every $T>0$, there exists a constant $N=N(T, c, d, h)>0$ such that
 $$
 p_Y(t, r) \,\le\, N\,  t\,  r^{-d} \Psi_Y( r^{-1}), \quad (t, r) \in (0, T] \times [0, \infty).
$$
\end{thm}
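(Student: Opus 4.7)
The plan is to compare two evaluations of the quantity $\E_0[e^{-\lambda|Y_t|^2} - e^{-2\lambda|Y_t|^2}]$ at the choice $\lambda = r^{-2}$: the Fourier-side bound provided by Lemma \ref{l:pub1}, and a density-side lower bound obtained by restricting to a well-chosen annulus and invoking {\bf (A2)}. The inequality $p_Y(t,r) \le N t r^{-d} \Psi_Y(r^{-1})$ then falls out of dividing the two.

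First, fix $r > 0$ and $t \in (0,T]$. Starting from \eqref{e:PP0} with $\lambda = r^{-2}$ and passing to polar coordinates (legal because the integrand is radial in $\xi$), one obtains
$$\E_0[e^{-|Y_t|^2/r^2} - e^{-2|Y_t|^2/r^2}] = C_d \, g_t(r^{-2}),$$
where $C_d = (4\pi)^{-d/2}\,\sigma(S^{d-1})$. Applying Lemma \ref{l:pub1} with $v = r^2$ gives the \emph{upper bound}
$$\E_0[e^{-|Y_t|^2/r^2} - e^{-2|Y_t|^2/r^2}] \le C_d\, N\, t\, \Psi_Y(r^{-1}).$$

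Second, I would expand the same expectation directly against the density:
$$\E_0[e^{-|Y_t|^2/r^2} - e^{-2|Y_t|^2/r^2}] = \int_{\R^d} \bigl(e^{-|y|^2/r^2} - e^{-2|y|^2/r^2}\bigr)\, p_Y(t,|y|)\, dy,$$
and localize to the annulus $A := \{y : r/\sqrt{2} \le |y| \le r\}$. On $A$ the exponent $s := |y|^2/r^2$ lies in $[1/2,1]$, so the probe function $f(s) := e^{-s} - e^{-2s}$ is bounded below by a positive absolute constant $c_0 := \min_{s \in [1/2,1]} f(s) > 0$. Simultaneously, $|y| \le r$ on $A$, so {\bf (A2)} yields $p_Y(t,|y|) \ge c(T)^{-1} p_Y(t,r)$. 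Since $|A| = \omega_d (1 - 2^{-d/2}) r^d$, this gives the \emph{lower bound}
$$\E_0[e^{-|Y_t|^2/r^2} - e^{-2|Y_t|^2/r^2}] \ge c_0\, c(T)^{-1}\, \omega_d (1-2^{-d/2})\, r^d\, p_Y(t,r).$$

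Combining the two bounds and solving for $p_Y(t,r)$ produces the claimed estimate with a constant $N$ depending only on $T, c, d, h$. The case $r = 0$ is trivial because the right-hand side is (at least formally) infinite. The main conceptual step is the simultaneous use of $f(s) = e^{-s}-e^{-2s}$ as a \emph{test function}: it is positive, radially concentrated near scale $r$ so that restricting to $A$ loses nothing, and it is the Fourier/subordination-friendly combination for which Lemma \ref{l:pub1} controls $g_t$. Once this probe is chosen, the rest of the argument is bookkeeping.
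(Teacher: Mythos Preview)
Your proof is correct and uses the same essential ingredients as the paper: the Fourier-side bound on $g_t$ from Lemma~\ref{l:pub1} combined with a density-side annulus estimate via {\bf (A2)}. Your packaging is in fact slightly more direct than the paper's, which routes the comparison through the Laplace transform of $r\mapsto r^{d/2}p_Y(t,r^{1/2})$ and invokes {\bf (A2)} twice; you bypass this intermediary by bounding $\E_0[e^{-|Y_t|^2/r^2}-e^{-2|Y_t|^2/r^2}]$ from below directly on the annulus $\{r/\sqrt{2}\le |y|\le r\}$ with a single use of {\bf (A2)}.
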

\begin{proof}
Fix $t \in (0, T]$. For $r \ge 0$ define $f_t(r)=r^{d/2}p_Y(t, r^{1/2}).$
By {\bf (A2)}, for $r \ge 0$,
\begin{align}
&\bP_0(  \sqrt{r/2} <|Y_t| <\sqrt{r}) =\int_{\sqrt{r/2} <|y| <\sqrt{r}} p_Y(t, |y|)dy \nonumber\\
&\ge |B(0,1)| (1-2^{-d/2}) c^{-1} r^{d/2} p_Y(t, r^{1/2})=|B(0,1)| (1-2^{-d/2}) c^{-1}f_t(r). \label{e:PP5}
\end{align}
Denoting $\cL f_t(\lambda)$ the Laplace transform of $f_t$, we have
\begin{align}
&\cL f_t(\lambda) \le N\int_0^\infty \bP_0(  \sqrt{r/2} <|Y_t| <\sqrt{r}) e^{-\lambda r}dr=
N\E_0 \int_{|Y_t|^2}^{2|Y_t|^2}e^{-\lambda r}dr \nonumber\\
&=N\lambda^{-1} \E_0 [e^{-\lambda|Y_t|^2} - e^{-2\lambda|Y_t|^2}]=N\lambda^{-1} g_t(\lambda), \quad  \lambda > 0
\label{e:PP6}
\end{align}
from \eqref{e:PP0}.

Using {\bf (A2)} again,
we get that, for any $v>0$
\begin{align*}
\cL f_t(v^{-1})=&\int^\infty_0e^{-av^{-1} } f_t(a)\, da= v \int^\infty_0e^{-s}f_t\left(sv\right)ds\\
\ge&v \int^{1}_{1/2} e^{-s}f_t\left(sv\right)ds= v \int^{1}_{1/2} e^{-s} s^{d/2}v^{d/2} p_Y\left(t,s^{1/2}v^{1/2}\right)ds \\
\ge&  c^{-1} v 2^{-d/2}  v^{d/2}  p_Y\left(t,v^{1/2}\right) \int^{1}_{1/2} e^{-s}ds = c^{-1}2^{-d/2} v f_t\left(v \right)\left(\int^{1}_{1/2} e^{-s}ds\right).
\end{align*}
Thus
\begin{align}
f_t\left(v\right)\le c 2^{d/2}\frac{v^{-1}\cL f_t(v^{-1})}{e^{-1/2}-e^{-1}}.
\label{e:PP7}
\end{align}

 Now combining  \eqref{e:PP6} and \eqref{e:PP7} with Lemma \ref{l:pub1} we
 conclude
 $$
 p_Y(t, r) = r^{-d} f_t(r^2) \le N r^{-d-2}\cL f_t(r^{-2}) \le N r^{-d}g_t(r^{-2}) \le N  t  r^{-d} \Psi_Y( r^{-1}).
$$
\end{proof}

\mysection{Bernstein functions and subordinate Brownian motion}\label{s:2}


Let $S=(S_t: t\ge 0)$ be a subordinator, that is, an increasing
L\'evy process taking values in $[0,\infty)$ with $S_0=0$. A
subordinator $S$ is completely characterized by its Laplace exponent
$\phi$ via
$$
\E[\exp(-\lambda S_t)]=\exp(-t \phi(\lambda))\, ,\quad  \lambda > 0.
$$
The Laplace exponent $\phi$ can be written in the form (cf. \cite[p.
72]{Ber})
\begin{eqnarray}
\label{01311}
\phi(\lambda)=b\lambda +\int_0^{\infty}(1-e^{-\lambda t})\,
\mu(dt)\, .
\end{eqnarray}
Here $b \ge 0$, and $\mu$ is a $\sigma$-finite measure on
$(0,\infty)$ satisfying
$$
\int_0^{\infty} (t\wedge 1)\, \mu(dt)< \infty\, .
$$
We call the constant $b$  the drift and $\mu$  the L\'evy measure
of the subordinator $S$.

 A smooth function $g : (0, \infty) \to [0, \infty)$ is called a Bernstein function if
 $$
 (-1)^n D^ng \le 0, \quad \forall n\in \bN.
 $$
 It is well known that a nonnegative function $\phi$ on $(0,
\infty)$ is the Laplace exponent of a subordinator if and only if it
is a Bernstein function with $\phi(0+)=0$ (see, for instance, Chapter 3 of \cite{SSV}). By the concavity, for any  Bernstein function $\phi$,
\begin{equation}\label{e:Berall}
\phi(\lambda t)\le \lambda\phi(t) \qquad \text{ for all } \lambda \ge 1, t >0\, ,
\end{equation}
implying
\begin{equation}\label{e:uv}
\frac{\phi(v)}{v}\le \frac{\phi(u)}{u}\, ,\quad 0<u\le v\, .
\end{equation}

Clearly \eqref{e:Berall} implies the following.

\begin{lemma}\label{l:phi-property}
Let $\phi$ be a Bernstein function. Then for all  $\lambda, t>0$,
$1 \wedge  \lambda\le {\phi(\lambda t)}/{\phi(t)} \le 1 \vee
 \lambda$.
\end{lemma}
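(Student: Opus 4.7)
The plan is to deduce both inequalities directly from the already established scaling relation \eqref{e:Berall}, namely $\phi(\lambda t)\le\lambda\phi(t)$ for $\lambda\ge 1$ and $t>0$, together with the obvious monotonicity of $\phi$ (any Bernstein function is nondecreasing, since its derivative is completely monotone, hence nonnegative). I will split the argument into the two cases $\lambda\ge 1$ and $0<\lambda\le 1$.

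First I would handle $\lambda\ge 1$. Here \eqref{e:Berall} gives immediately $\phi(\lambda t)/\phi(t)\le\lambda=1\vee\lambda$. On the other hand, monotonicity yields $\phi(\lambda t)\ge\phi(t)$, i.e.\ $\phi(\lambda t)/\phi(t)\ge 1=1\wedge\lambda$. Both bounds are therefore obtained for $\lambda\ge 1$.

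Next I would treat $0<\lambda\le 1$ by reducing to the previous case. Since $\lambda^{-1}\ge 1$, applying \eqref{e:Berall} with the pair $(\lambda^{-1},\lambda t)$ in place of $(\lambda,t)$ gives $\phi(t)=\phi(\lambda^{-1}\cdot\lambda t)\le\lambda^{-1}\phi(\lambda t)$, so $\phi(\lambda t)/\phi(t)\ge\lambda=1\wedge\lambda$. The upper bound $\phi(\lambda t)/\phi(t)\le 1=1\vee\lambda$ again follows from monotonicity of $\phi$. Combining the two cases yields the two-sided estimate claimed in the lemma.

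There is essentially no obstacle here; the only thing to notice is that \eqref{e:Berall} is actually equivalent to a statement about all $\lambda>0$ once one allows the change of variables $\lambda\mapsto\lambda^{-1}$, and that Bernstein functions are nondecreasing (a fact used implicitly in the paper, since $\phi'$ is completely monotone and hence nonnegative). Everything else is a one-line manipulation.
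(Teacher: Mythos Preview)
Your proof is correct and is exactly what the paper has in mind: the paper does not give a detailed argument but simply says ``Clearly \eqref{e:Berall} implies the following,'' and your case split using \eqref{e:Berall} together with the monotonicity of Bernstein functions is precisely the intended verification.
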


The following will be used in section \ref{section application} to control the deterministic part of equation (\ref{eqn 0}).

\begin{lemma}
                                              \label{13131}
For each nonnegative integer $n$, there is a constant $N(n)$ such that for every Bernstein function with the drift $b=0$,
\begin{eqnarray}
\label{013122} \frac{\lambda^n |D^n \phi(\lambda)|}{\phi(\lambda)}  \leq N(n), \quad \forall \lambda.
\end{eqnarray}
\end{lemma}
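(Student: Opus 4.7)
The approach is entirely pointwise: use the Lévy-Khintchine representation with $b=0$, differentiate under the integral, then compare with $\phi(\lambda)$ via a scalar inequality on the integrand.

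First, the case $n=0$ is trivial since the left side equals $1$. For $n\geq 1$, since $\phi(\lambda)=\int_{0}^{\infty}(1-e^{-\lambda t})\mu(dt)$, repeated differentiation under the integral (justified by dominated convergence together with $\int_0^\infty (t\wedge 1)\,\mu(dt)<\infty$ and the fast decay of $t^n e^{-\lambda t}$ at infinity for each fixed $\lambda>0$) gives
\begin{equation*}
D^n\phi(\lambda) \;=\; (-1)^{n+1}\int_0^\infty t^n e^{-\lambda t}\,\mu(dt),
\end{equation*}
so that
\begin{equation*}
\lambda^n|D^n\phi(\lambda)| \;=\; \int_0^\infty (\lambda t)^n e^{-\lambda t}\,\mu(dt).
\end{equation*}

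The key step is the pointwise inequality
\begin{equation*}
s^n e^{-s} \;\leq\; N(n)\,(1-e^{-s}), \qquad s>0, \ n\geq 1,
\end{equation*}
with $N(n)$ independent of $s$. To verify it, set $F_n(s)=s^n e^{-s}/(1-e^{-s})$. As $s\to 0^+$, one has $1-e^{-s}\sim s$ and $s^n e^{-s}\sim s^n$, so $F_n(s)\sim s^{n-1}\to 0$ for $n\geq 1$. As $s\to\infty$, $1-e^{-s}\to 1$ while $s^n e^{-s}\to 0$. Being continuous on $(0,\infty)$ with finite limits at both ends, $F_n$ is bounded on $(0,\infty)$, which furnishes the desired $N(n)$.

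Applying the inequality with $s=\lambda t$ and integrating against $\mu$ yields
\begin{equation*}
\lambda^n|D^n\phi(\lambda)| \;\leq\; N(n)\int_0^\infty(1-e^{-\lambda t})\,\mu(dt) \;=\; N(n)\,\phi(\lambda),
\end{equation*}
which is \eqref{013122}. There is no real obstacle; the only delicate point is the verification of the pointwise bound $F_n\leq N(n)$, which crucially uses $b=0$ in the representation (otherwise a term $b\lambda$ would contribute $b\lambda$ to $\lambda D\phi(\lambda)$, which is not controlled uniformly by $\phi(\lambda)=b\lambda+\int(1-e^{-\lambda t})\mu(dt)$ for $n=1$ only up to a harmless extra constant, but for $n\geq 2$ the drift term vanishes under differentiation anyway; keeping $b=0$ makes the bookkeeping uniform across $n$).
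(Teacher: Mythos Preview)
Your proof is correct and follows essentially the same argument as the paper: differentiate the L\'evy--Khintchine representation, then use the scalar bound $s^n e^{-s}\le N(n)(1-e^{-s})$ on the integrand. You supply a bit more detail (justification of differentiation under the integral and an explicit verification of the scalar inequality), but the core idea is identical.
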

\begin{proof}
The statement is trivial if $n=0$. So let $n \geq 1$. Due to \eqref{01311},
$$
|D^n \phi(\lambda)| = \int_0^\infty t^n e^{-\lambda t} \mu(dt).
$$
Use $t^n e^{-t} \leq N(1-e^{-t})$ to conclude
\begin{eqnarray*}
\lambda^n |D^n \phi(\lambda)| \leq \int_0^\infty (\lambda t)^n e^{-\lambda t} \mu(dt) \leq N \int_0^\infty (1- e^{-\lambda t}) \mu(dt).
\end{eqnarray*}
This obviously leads to \eqref{013122}.
\end{proof}

Throughout this article,
we  assume that
 $\phi$ is a Bernstein functions with the drift $b=0$ and  $\phi(1)=1$.
 Thus
\begin{eqnarray*}
\phi(\lambda)=\int_0^{\infty}(1-e^{-\lambda t})\,
\mu(dt).
\end{eqnarray*}

Let
$d\ge 1$  and $W:=(W_t:t\ge 0)$ be a  $d$-dimensional Brownian motion   with $W_0=0$. Then
$$
\E\left[e^{i\xi\cdot W_t}\right]=
e^{-t|\xi|^2}, \qquad \forall \xi \in \R^d, \,\, t>0
$$
and
$W$ has the transition density
$$
q(t,x,y)=q_d(t,x,y)=(4\pi t)^{-d/2} e^{-\frac{|x-y|^2}{4t}}\, ,\quad x,y\in \R^d, \ t>0\, .
$$

Let $X=(X_t: t\ge 0)$  denote
the subordinate Brownian motion defined by $X_t:=W_{S_t}$. Then
$X_t$ has the characteristic exponent  $\Psi (x)= \phi (|x|^2)$ and  has the  transition density
\begin{equation}\label{e:psbmc}
p(t,x)=p_d(t,x):= \int_{\bR^d} e^{i\xi \cdot x} e^{-t \phi(|\xi|^2)} d\xi.
\end{equation}
For $t\ge 0$, let $\eta_t$ be the distribution of $S_t$. That is, for any Borel set
$A\subset [0,\infty)$, $\eta_t(A)=\bP(S_t\in A)$.
Then we have
\begin{equation}\label{e:psbm}
p(t,x)=p_d(t,x)=\int_{(0, \infty)}(4\pi s)^{-d/2}\exp\left(-\frac{|x|^2}{4s}\right)\, \eta_t(ds)
\end{equation}
(see \cite[Section 13.3.1]{KSV3}). Thus $p(t,x)$ is smooth in $x$.

The L\'evy measure $\Pi$ of $X$ is given by
(see e.g.~\cite[pp. 197--198]{Sat})
$$
\Pi(A)=\int_A \int_0^{\infty}p(t,x)\, \mu(dt)\, dx =\int_A J(x)\,
dx\, ,\quad A\subset {\mathbb R}^d\, ,
$$
where
\begin{equation}\label{ksv-jumping function}
J(x):= \int_0^{\infty}p(t,x)\, \mu(dt)
\end{equation}
is the L\'evy density of $X$. Define the function $j:(0,\infty)\to
(0,\infty)$ as
\begin{equation}\label{ksv-function j measure}
j(r)=j_d(r):=  \int_0^{\infty} (4\pi)^{-d/2} t^{-d/2} \exp\left(-\frac{r^2}{4t}\right)\,
\mu(dt)\, , \quad r>0.
\end{equation}
Then
$
J(x)=j(|x|)$ and
\begin{equation}\label{e:psi}
\Psi(\xi)=\int_{\R^d}(1-\cos(\xi\cdot y))j(|y|)dy.
\end{equation}
Note that the function $r\mapsto
j(r)$ is strictly positive, continuous and decreasing on $(0, \infty)$.

The next lemma is  an extension of \cite[Lemma 3.1]{KSV8}.

\begin{lemma}\label{l:j-g-upper}
 There exists a constant $N>0$ depending only on $d$ such that
    \begin{equation*}\label{e:j-up}
    j(r)\le  N\,r^{-d}\phi(r^{-2})\, ,\qquad \forall r > 0.
    \end{equation*}
\end{lemma}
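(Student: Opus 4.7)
My approach is to split the integral defining $j(r)$ at $t=r^2$, writing $j(r)=I_1+I_2$ with
\[
I_1=\int_0^{r^2}(4\pi t)^{-d/2}e^{-r^2/(4t)}\,\mu(dt),\qquad I_2=\int_{r^2}^\infty(4\pi t)^{-d/2}e^{-r^2/(4t)}\,\mu(dt),
\]
and to bound each piece by $Nr^{-d}\phi(r^{-2})$ using only the representation $\phi(r^{-2})=\int_0^\infty(1-e^{-t/r^2})\,\mu(dt)$ together with elementary Gaussian estimates. The threshold $t=r^2$ is forced by the scaling tying the heat kernel to $\phi$ at $r^{-2}$.

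For the tail $I_2$, on $\{t\ge r^2\}$ I have $(4\pi t)^{-d/2}\le(4\pi r^2)^{-d/2}=Nr^{-d}$ and $e^{-r^2/(4t)}\le 1$, so $I_2\le Nr^{-d}\mu([r^2,\infty))$. Since $1-e^{-t/r^2}\ge 1-e^{-1}$ on the same range,
\[
\mu([r^2,\infty))\le (1-e^{-1})^{-1}\int_{r^2}^\infty(1-e^{-t/r^2})\,\mu(dt)\le (1-e^{-1})^{-1}\phi(r^{-2}),
\]
which yields $I_2\le Nr^{-d}\phi(r^{-2})$.

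For $I_1$, the decisive pointwise bound is
\[
(4\pi t)^{-d/2}e^{-r^2/(4t)}\le N\,t\,r^{-d-2},\qquad r,t>0,
\]
which follows from the boundedness of $s\mapsto s^{d/2+1}e^{-s}$ on $[0,\infty)$ after the substitution $s=r^2/(4t)$. Hence
\[
I_1\le Nr^{-d-2}\int_0^{r^2}t\,\mu(dt)=Nr^{-d}\int_0^{r^2}(t/r^2)\,\mu(dt).
\]
Combining this with the elementary inequality $x\le(1-e^{-1})^{-1}(1-e^{-x})$ for $x\in[0,1]$ (a concavity interpolation between the matching endpoint values $0$ and $1$), applied to $x=t/r^2\le 1$, I obtain $I_1\le Nr^{-d}\phi(r^{-2})$. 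Summing the two pieces finishes the proof. There is no real obstacle: the argument is entirely elementary, and the only nontrivial choice is the splitting threshold.
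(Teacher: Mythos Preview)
Your proof is correct and entirely self-contained, but it follows a genuinely different route from the paper. The paper first establishes the heat-kernel upper bound $p(t,r)\le N\,t\,r^{-d}\phi(r^{-2})$ for all $t,r>0$ (via Theorem~\ref{t:hku}, which in turn rests on Lemma~\ref{l:pub1} and a Laplace-transform argument for the radial transition density), and then recovers the bound on $j$ through the relation $\lim_{t\to 0}t^{-1}\int p(t,|y|)f(y)\,dy=\int j(|y|)f(y)\,dy$ with a suitable annular test function. Your argument bypasses the heat kernel altogether: you work directly with the defining integral $j(r)=\int_0^\infty (4\pi t)^{-d/2}e^{-r^2/(4t)}\,\mu(dt)$, split at $t=r^2$, and compare each piece pointwise to $\int_0^\infty(1-e^{-t/r^2})\,\mu(dt)=\phi(r^{-2})$ using only the elementary inequalities $s^{d/2+1}e^{-s}\le C$ and $x\le(1-e^{-1})^{-1}(1-e^{-x})$ on $[0,1]$. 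This is shorter and more elementary; the paper's detour through $p(t,x)$ is natural in context because the heat-kernel bound is independently needed later, so the Lévy-density estimate comes essentially for free once Theorem~\ref{t:hku} is in hand.
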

\begin{proof}
By Lemma \ref{l:phi-property},  {\bf (A1)} holds  with $h(\lambda)=1 \vee \lambda^2$, and {\bf (A2)} holds with $c=1$ since $r \to p(t,r)$ is decreasing.  Thus
by Theorem \ref{t:hku}, we have
\begin{equation}\label{e:hkne1}
p(t,r)\le N t r^{-d}\phi(r^{-2})    \quad \forall t, r>0
\end{equation}
where $N>0$ depends only on $d$.
The lemma now follows from \eqref{e:hkne1} and \eqref{e:ig}.
Indeed,
by \eqref{e:ig} and Section 4.1 in \cite{Sk} that for $f \in C^2_0(\R^d\setminus \{0\})$ (the set of $C^2$-functions on $\bR^d\setminus \{0\}$ with compact support), we have
\begin{equation}\label{e:hkne2}
\lim_{t \to 0}\frac{1}{t} \int_{\R^d} p(t, |y|)f(y)dy=\phi(\Delta) f(0)
= \int_{\R^d} j(|y|)f(y)dy.
\end{equation}
 We fix $r>0$ and  choose a $f \in C^2_0(\bR^d\setminus \{0\})$ such that
$f=1$ on $B(0, r) \setminus B(0, r/2)$ and $f=0$ on $B(0, 2r)^c \cup  B(0, r/4)$.
Note that since $s \to j(s)$ is decreasing,
we have
\begin{align*}
&r^d j(r) \le \frac{d 2^d}{2^d-1} \int_{r/2}^r j(s)s^{d-1} ds \le N \int_{B(0, r) \setminus B(0, r/2)} j(|y|)
dy\\
&\le N \int_{\R^d} j(|y|) f(y)
dy
\end{align*}
where $N>0$ depends only on $d$.
Thus by \eqref{e:hkne1} and \eqref{e:hkne2}, we see that
\begin{align*}
&r^d j(r) \le N \lim_{t \to 0}\frac{1}{t} \int_{\R^d} p(t, |y|)f(y)
dy\le N  \int_{\R^d} |y|^{-d}\phi(|y|^{-2}) f(y)
dy\\
&\le N \int_{B(0, 2r) \setminus B(0, r/4)} |y|^{-d}\phi(|y|^{-2}) dy
\le N  \int_{B(0, 2r) \setminus B(0, r/4)} |y|^{-d} dy  \phi(4r^{-2})\\
&\le N  \int^{2r}_{r/4} r^{-1} dr  \phi(4r^{-2})  \le N  \phi(4r^{-2})\end{align*}
where $N>0$ depends only on $d$.
Now the lemma follows immediately by \eqref{e:Berall}.
\end{proof}

For $a>0$, we define $\phi^a(\lambda)= \phi(\lambda a^{-2})/\phi( a^{-2})$. Then $\phi^aâ$ is again a Bernstein function satisfying $\phi^a(1)=1$. We will use $\mu^a(dt)$  to denote the L\'evy measure
of $\phi^a$ and $S^a=(S^a_t)_{t\ge 0}$ to denote a subordinator with Laplace exponent
$\phi^a$.

Assume that  $S^a=(S^a)_{t\ge 0}$ is  independent of the Brownian motion $W$. Let $X^a=(X^a_t)_{t\ge 0}$ be defined by $X^a_t:=W_{S^a_t}$. Then $X^a$ is a rotationally invariant L\'evy process with characteristic exponent
\begin{equation}\label{e:psi1}
\Psi^a(\xi)=\phi^a(|\xi|^2)=\frac{\phi(a^{-2}|\xi|^2)}{\phi(a^{-2})}=\frac{\Psi(a^{-1}\xi)}{\phi(a^{-2})}\, , \quad \xi\in \R^d\, .
\end{equation}
This shows that $\{ X_t^a-X^a_0\}_{t\ge 0}$ is identical in law to the process $\{a^{-1}(X_{t/\phi(a^{-2})}-X_0)\}_{t\ge 0}$.
$X^1$ is simply the process $X$.

Since, by  \eqref{e:psi} and \eqref{e:psi1},
\begin{equation}\label{e:psi2}
\Psi^a(\xi)=\frac{1}{\phi(a^{-2})}\int_{\R^d}(1-\cos(a^{-1}\xi\cdot y))j(|y|)dy=\frac{a^d}{\phi(a^{-2})}\int_{\R^d}(1-\cos(\xi\cdot z))j(a|z|)dz,
\end{equation}
the L\'evy measure of $X^a$ has the density $J^a(x)=j^a(|x|)$, where $j^a$ is given by
\begin{eqnarray}
j^a(r):= a^d \phi(a^{-2})^{-1} j(ar)\,  \label{e:jumping-function-a}.
\end{eqnarray}

We  use $p^a(t, x, y)=p^a(t, x-y)$ to denote the transition density of $X^a$.
Recall that
the process $\{a^{-1} (X_{t/\phi(a^{-2})}-X_0):t\ge 0\}$ has the same law as
$\{X^a_t-X^a_0: t\ge 0\}$. In terms of transition densities, this can be written as
\begin{equation*}\label{e:scaling4densbm}
p^a(t, x, y)=a^{d}p(\frac{t}{\phi(a^{-2})}, ax, ay), \qquad (t, x, y)\in (0, \infty)\times\R^d\times \R^d.
\end{equation*}
Thus
\begin{equation}\label{e:scaling4densbm1}
p(t, x)=p^1(t, x)=a^{-d}p^a(t\phi(a^{-2}), a^{-1}x), \qquad (t, x)\in (0, \infty)\times\R^d.
\end{equation}
Denote
$$
a_t:=\frac{1}{\sqrt{\phi^{-1} (t^{-1})}}.
$$
From \eqref{e:scaling4densbm1},
we see that
\begin{equation*}\label{e:scaling4densbm2}
p(t, x)=(a_t)^{-d}p^{a_t}(1, (a_t)^{-1}x), \qquad (t, x)\in (0, \infty)\times\R^d.
\end{equation*}

Let $\beta>0$.  For appropriate functions $f=f(x)$, define
$$
T_{t}f(x) := (p(t, \cdot) \ast f(\cdot))(x) = \int_{\bR^d} p (t,x-y)f(y)dy, \quad t>0,
$$
$$
\phi(\Delta)^{\beta} f:=-\phi(-\Delta)^{\beta}f := \cF^{-1} (\phi(|\xi|^2)^{\beta} \cF(f))(x), \quad t>0.
$$
In particular, if  $\beta=1$ and $f\in C_b^2(\R^d)$  then we have
\begin{eqnarray}
&\phi(\Delta) f(x)=-\phi(-\Delta) f(x)=\int_{\R^d}\left(f(x+y)-f(x)-\nabla f(x) \cdot y {\mathbf 1}_{\{|y|\le 1\}}\right)\, j(|y|)\, dy \nonumber\\
& =\lim_{\eps \downarrow 0}\int_{\{y\in
\bR^d: \, |y|>\eps\}} (f(x+y)-f(x)) j(|y|)\, dy  \label{eqn 18.2013}
\end{eqnarray}
(see Section 4.1 in \cite{Sk}).

Recall that  $\phi^{a_t}(\lambda):=\phi(\lambda (a_t)^{-2})/\phi((a_t)^{-2})$.
Since $t\phi(a_t^{-2})=1$, by \eqref{e:psbmc}
\begin{eqnarray}
\phi(\Delta)^{1/2}p(t,\cdot) (x)
&=& \int_{\bR^d}\phi(|\xi|^2)^{1/2} e^{i x \xi}e^{-t\phi(|\xi|^2)} d\xi  \nonumber\\
&=&t^{-1/2}\int_{\bR^d}(\phi(|\xi|^2)/\phi(a_t^{-2}))^{1/2} e^{i  x \xi}e^{-\phi(|\xi|^2)/\phi(a_t^{-2})} d\xi \nonumber\\
&=&t^{-1/2}(a_t)^{-d}\int_{\bR^d}\phi^{a_t}(|\xi|^2)^{1/2} e^{i (a_t)^{-1} x \xi}e^{-\phi^{a_t}(|\xi|^2)} d\xi \nonumber\\
&=&(a_t)^{-d}t^{-1/2}\phi^{a_t}(\Delta)^{1/2}p^{a_t}(1,\cdot) ((a_t)^{-1} x).
\label{e:pscale}
\end{eqnarray}

By \cite[Corollary 3.7 (iii)]{SSV},
$\phi^{a}(\lambda)^{1/2}$ is also a  Bernstein function.
Thus
$\phi^a(\lambda)^{1/2}=\int_0^{\infty}(1-e^{-\lambda t})\,
\wh \mu^a(dt) $ where $\wh \mu^a$ is   the L\'evy measure of $\phi^a(\lambda)^{1/2}$. Let
$$
\wh j^a(r):=\int^{\infty}_0(4\pi t)^{-d/2}e^{-r^2/(4t)}\wh \mu^a(dt), \qquad r, a>0
$$
and $\wh j(r):=\wh j^1(r)$.
Then, by \eqref{e:jumping-function-a}
\begin{eqnarray}
\label{11021}
\wh j^a(r)=  a^d \phi(a^{-2})^{-1/2} \wh j(ar)\,, \qquad r, a>0 . \label{e:whjumping-function-a}
\end{eqnarray}
As (\ref{eqn 18.2013}), for every $f \in C_b^2(\R^d)$,
\begin{eqnarray}
\phi^a(\Delta)^{1/2} f(x)&:=&-\phi^a(-\Delta)^{1/2} f(x) \nonumber\\
&=&\int_{\R^d}\left(f(x+y)-f(x)-\nabla f(x) \cdot y {\mathbf 1}_{\{|y|\le r\}}\right)\,\wh  j^a(|y|)\, dy  \nonumber\\
 &=&\lim_{\eps \downarrow 0}\int_{\{y\in \bR^d: \, |y|>\eps\}} (f(x+y)-f(x)) \wh  j^a(|y|)\, dy\label{e:1over2}
\end{eqnarray}

Clearly by Lemma \ref{l:j-g-upper} we have the following. We emphasize that the constant does not depend on neither  $\phi$ nor $a$.

\begin{lemma}\label{l:j-g-uppera}
There exists a constant $N>1$ depending only on $d$ such that for any $a>0$ and  $x\neq 0$
    \begin{equation*}\label{e:j-up}
    \wh  j^a (r)\le N r^{-d}(\phi^a(r^{-2}))^{1/2}\, ,\qquad \forall r > 0.
    \end{equation*}
\end{lemma}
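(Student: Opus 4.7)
The plan is to establish the bound first for $a=1$ by applying Lemma~\ref{l:j-g-upper} to a cleverly chosen Bernstein function, and then extend to arbitrary $a>0$ via the scaling identity \eqref{e:whjumping-function-a}. The crucial leverage is that the constant in Lemma~\ref{l:j-g-upper} depends only on the dimension $d$, not on the particular Bernstein function to which it is applied.

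First I would set $\psi(\lambda):=\phi(\lambda)^{1/2}$ and verify that $\psi$ satisfies the standing assumptions of the paper so that Lemma~\ref{l:j-g-upper} applies to it. It is a Bernstein function by \cite[Corollary 3.7(iii)]{SSV}, clearly $\psi(1)=\phi(1)^{1/2}=1$, and it has zero drift because $\psi(\lambda)/\lambda=(\phi(\lambda)/\lambda^{2})^{1/2}\to 0$ as $\lambda\to\infty$ (inherited from the zero drift of $\phi$, since $\phi(\lambda)/\lambda\to 0$ implies $\phi(\lambda)/\lambda^2\to 0$). By definition, the L\'evy density of the subordinate Brownian motion associated with $\psi$ is precisely $\wh j(r)$, so applying Lemma~\ref{l:j-g-upper} to $\psi$ produces
\begin{equation*}
\wh j(r)\,\le\, N\, r^{-d}\,\phi(r^{-2})^{1/2},\qquad r>0,
\end{equation*}
with $N=N(d)$.

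Second, I would plug this estimate into the scaling identity \eqref{e:whjumping-function-a}, replacing $r$ by $ar$:
\begin{equation*}
\wh j^{a}(r)=a^{d}\phi(a^{-2})^{-1/2}\wh j(ar)\,\le\, N\, a^{d}\phi(a^{-2})^{-1/2}(ar)^{-d}\phi((ar)^{-2})^{1/2}\,=\, N\, r^{-d}\bigl(\phi^{a}(r^{-2})\bigr)^{1/2},
\end{equation*}
where the last equality uses the definition $\phi^{a}(\lambda)=\phi(\lambda a^{-2})/\phi(a^{-2})$. This is exactly the desired inequality, and $N$ continues to depend only on $d$.

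There is no real obstacle: the heavy lifting was done in Theorem~\ref{t:hku} and Lemma~\ref{l:j-g-upper}. The only thing one must be slightly careful about is justifying that the constant in Lemma~\ref{l:j-g-upper} is genuinely independent of the underlying Bernstein function; this is transparent from its proof, since that constant inherits only the dimension dependence of Theorem~\ref{t:hku} and the universal bounds of Lemma~\ref{l:phi-property} (which hold with $h(\lambda)=1\vee\lambda^{2}$ for every Bernstein subordinate Brownian motion).
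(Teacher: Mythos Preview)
Your proposal is correct and matches the paper's approach: the paper simply states ``Clearly by Lemma~\ref{l:j-g-upper}'' and emphasizes that the constant there is independent of $\phi$, which is exactly the leverage you identify. One could apply Lemma~\ref{l:j-g-upper} directly to $(\phi^a)^{1/2}$ (which also satisfies $(\phi^a)^{1/2}(1)=1$) and skip the scaling step, but your two-step route via $\phi^{1/2}$ and \eqref{e:whjumping-function-a} is equally valid.
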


\vspace{5mm}

Recall conditions {\bf{(H1)}} and {\bf{(H2)}}:

\noindent
{\bf (H1):}
There exist constants $0<\delta_1 \le \delta_2 <1$ and $a_1, a_2>0$  such that
\begin{equation*}
a_1\lambda^{\delta_1} \phi(t) \le \phi(\lambda t) \le a_2 \lambda^{\delta_2}\phi(t)  \quad \lambda \ge 1, t \ge 1\, .
\end{equation*}
\noindent
{\bf (H2):}
There exist constants $0<\delta_3\leq1$ and $a_3>0$  such that
\begin{equation*}
 \phi(\lambda t) \le a_3 \lambda^{\delta_3} \phi(t), \quad \lambda \le 1, t \le 1\, .
\end{equation*}

\vspace{4mm}

By taking $t=1$ in {\bf (H1)} and {\bf (H2)} and using Lemma \ref{l:phi-property}, we get that
if {\bf (H1)} holds then
\begin{equation}\label{e:H1-large}
a_1\lambda^{\delta_1} \le \phi(\lambda ) \le a_2 \lambda^{\delta_2}\, , \quad \lambda \ge 1\, ,
\end{equation}
and, if {\bf (H2)} holds then
\begin{equation}\label{e:H2-small}
 \lambda \le \phi(\lambda ) \le a_3 \lambda^{\delta_3}\, , \quad \lambda \le 1\, .
\end{equation}
Also, if  {\bf (H1)} holds  we have
\begin{equation}\label{e:H1-scaled}
a_1\lambda^{\delta_1} \phi^a(t) \le \phi^a(\lambda t) \le a_2 \lambda^{\delta_2} \phi^a(t)\, ,\quad \lambda \ge 1, t \ge a^2\,.
\end{equation}
Thus,
by taking $t=1$ in \eqref{e:H1-scaled}, if {\bf (H1)} holds and $a\le 1$ we get
$$
a_1\lambda^{\delta_1} \le \phi^a(\lambda ) \le a_2 \lambda^{\delta_2}\, ,\quad \lambda \ge 1.
$$
Thus, if {\bf (H1)} holds
\begin{equation}\label{e:la2}
a_1(T^{-2} \wedge 1)\lambda^{\delta_1} \le \phi^a(\lambda ) \le \frac{a_2}{\phi(T^{-2}) \wedge 1} \lambda^{\delta_2}\, ,\quad a \in (0, T], \, \lambda \ge 1.
\end{equation}
 In fact, if $T>1$ and $1 \le a \le T$ then for $\lambda \ge 1$
$$
\phi^a(\lambda )= \frac{\phi(\lambda a^{-2})}{\phi( a^{-2})} \le \frac{\phi(\lambda)}{\phi( a^{-2})} \le \frac{a_2 \lambda^{\delta_2}}{\phi( T^{-2})}
$$
and using Lemma \ref{l:phi-property}
$$
\phi^a(\lambda )= \frac{\phi(\lambda a^{-2})}{\phi( a^{-2})} \ge\frac{\phi(\lambda a^{-2})}{\phi( 1)}=\frac{\phi(\lambda a^{-2})}{\phi(\lambda )}\phi(\lambda ) \ge a^{-2} \phi(\lambda ) \ge T^{-2} \phi(\lambda ) \ge T^{-2}a_1\lambda^{\delta_1}.
$$

 Recall that  $p(t,x)$ is the  transition density of $X_t$.

\begin{corollary} \label{c:ekub}
Suppose {\bf (H1)} holds. Then for each $T>0$ there exists a constant $N=N(T,d,\phi)>0$ such that for $(t, x) \in (0, T] \times \R^d$,
\begin{equation}\label{e:psbm_assup}
p(t, x) \le N\left(\left(\phi^{-1}(t^{-1})\right)^{d/2}   \wedge  t \frac{\phi(|x|^{-2})}{|x|^d} \right). \end{equation}
\end{corollary}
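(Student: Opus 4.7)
The plan is to establish the two bounds in the minimum separately, combining earlier tools.

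\textbf{Bound 1: the pointwise upper bound } $p(t,x) \le N t \phi(|x|^{-2})/|x|^d$. This is essentially free from the work already done. The subordinate Brownian motion $X$ is rotationally symmetric with $\Psi(\xi)=\phi(|\xi|^2)$; by \eqref{e:psbm} the density $r\mapsto p(t,r)$ is decreasing, so {\bf (A2)} holds with $c=1$, and by Lemma \ref{l:phi-property} the condition {\bf (A1)} holds with $h(\lambda)=1\vee\lambda^2$. Hence Theorem \ref{t:hku} (which was already invoked in the proof of Lemma \ref{l:j-g-upper} to derive \eqref{e:hkne1}) yields this bound directly, with a constant depending on $T$ and $d$. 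Note that this bound does not require \textbf{(H1)}.

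\textbf{Bound 2: the on-diagonal bound } $p(t,x) \le N(\phi^{-1}(t^{-1}))^{d/2}$. Here I would use the scaling identity \eqref{e:scaling4densbm2}, namely
\[
p(t,x)=(a_t)^{-d}p^{a_t}(1,(a_t)^{-1}x),\qquad a_t=\frac{1}{\sqrt{\phi^{-1}(t^{-1})}},
\]
which reduces the task to showing $p^{a_t}(1,y)\le N$ uniformly in $y\in\R^d$ and $t\in(0,T]$. Since $t\in(0,T]$ implies $a_t\in(0,a_T]$, I would apply the Fourier representation \eqref{e:psbmc} to $X^{a_t}$ to get
\[
p^{a_t}(1,y)=\frac{1}{(2\pi)^d}\int_{\R^d} e^{iy\cdot\xi}\,e^{-\phi^{a_t}(|\xi|^2)}\,d\xi,
\]
so $|p^{a_t}(1,y)|\le (2\pi)^{-d}\int_{\R^d}e^{-\phi^{a_t}(|\xi|^2)}\,d\xi$. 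This is where \textbf{(H1)} enters: by \eqref{e:la2} (applied with the role of $T$ played by $a_T$), there is a constant $c_1>0$, depending only on $T,a_1,\delta_1$, such that $\phi^{a_t}(\lambda)\ge c_1\lambda^{\delta_1}$ for all $\lambda\ge 1$ and $t\in(0,T]$. Splitting the integral over $\{|\xi|\le 1\}$ and $\{|\xi|>1\}$ gives
\[
\int_{\R^d} e^{-\phi^{a_t}(|\xi|^2)}\,d\xi \le |B_1|+\int_{|\xi|>1}e^{-c_1|\xi|^{2\delta_1}}\,d\xi<\infty,
\]
a finite bound independent of $t\in(0,T]$. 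Substituting back and using $(a_t)^{-d}=(\phi^{-1}(t^{-1}))^{d/2}$ yields Bound 2.

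Taking the minimum of Bounds 1 and 2 produces \eqref{e:psbm_assup}. The only mildly subtle point is verifying that $a_t$ remains in a bounded interval for $t\in(0,T]$ so that the uniform lower bound in \eqref{e:la2} is applicable; this is immediate from the monotonicity of $\phi^{-1}$ and $t\mapsto a_t$. The main conceptual step is recognizing that the global on-diagonal bound must come from a scaling argument combined with the uniform polynomial lower bound on $\phi^{a}$ at infinity provided by \textbf{(H1)}, rather than from Theorem \ref{t:hku} itself (which alone only controls $p(t,x)$ for large $|x|$).
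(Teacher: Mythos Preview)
Your proof is correct and follows essentially the same approach as the paper: Bound 1 via Theorem \ref{t:hku} (checking {\bf (A1)} and {\bf (A2)} exactly as in the proof of Lemma \ref{l:j-g-upper}), and Bound 2 via the scaling identity \eqref{e:scaling4densbm1}, the Fourier representation \eqref{e:psbmc}, and the uniform lower bound on $\phi^{a_t}$ from \eqref{e:la2} to control $\int_{|\xi|\ge 1}e^{-\phi^{a_t}(|\xi|^2)}\,d\xi$. Your remark that $a_t\in(0,a_T]$ is precisely the point needed to invoke \eqref{e:la2} with the correct range of $a$.
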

\pf
The corollary follows from Theorem \ref{t:hku} and the first display on page 1073 of \cite{CKK2}.
Also one can see from \eqref{e:psbmc} and \eqref{e:scaling4densbm1} that
\begin{eqnarray*}
&&p(t,  x) =(a_t)^{-d}p^{a_t}(1, a_t^{-1}x)
\le (a_t)^{-d}\int_{\bR^d} e^{-\phi^{a_t}(|\xi|^2)} d\xi \\
&\le&(a_t)^{-d} \int_{|\xi| <1}  d\xi
+(a_t)^{-d}
\int_{|\xi| \ge 1} e^{-\phi^{a_t}(|\xi|^2)} d\xi
\\
&\le& N(a_t)^{-d}\left(1+
\int_{|\xi| \ge 1} e^{-a_1(a_T^{-2} \wedge 1)|\xi|^{2\delta_1}} d\xi \right).
\end{eqnarray*}
where \eqref{e:la2} is used in the last inequality. \qed

\begin{remark}{\rm
If
there exist constants $0<\delta_3\le \delta_4 < 1$ and $a_3, a_4>0$  such that
\begin{equation}\label{e:H}
a_4\lambda^{\delta_4} \phi(t) \le \phi(\lambda t) \le a_3 \lambda^{\delta_3} \phi(t), \quad \lambda \le 1, t \le 1\, ,
\end{equation}
 then, as in \cite{KSV8}
 the subordinate Brownian motion $X$ satisfies conditions (1.4), (1.13) and (1.14) from \cite{CK}.
 Thus, in fact, by \cite{CK}, if  {\bf (H1)} and \eqref{e:H} hold we have the sharp two-sided estimates for all $t>0$
\begin{equation}\label{e:psbm_p}
N^{-1}\left(\left(\phi^{-1}(t^{-1})\right)^{d/2}   \wedge  t \frac{\phi(|x|^{-2})}{|x|^d} \right) \le p(t, x) \le N\left(\left(\phi^{-1}(t^{-1})\right)^{d/2}   \wedge  t \frac{\phi(|x|^{-2})}{|x|^d} \right),
 \end{equation}
 where $N=N(\phi)>1$.

On the other hand, when  $\delta_3=\delta_4 =1$ in \eqref{e:H}, \eqref{e:psbm_p} does not hold and \eqref{e:psbm_assup}
is not sharp. For example, see \cite[(2.2), (2.4) and Theorem 4.1]{CKS2}. }
\end{remark}

For the rest of   this article we  assume that {\bf (H1)} and {\bf (H2)} hold.
Now we further discuss the scaling. If $0<r<1<R$, using \eqref{e:uv}, \eqref{e:H1-large} and \eqref{e:H2-small}, we have
$$
\frac{\phi(R)}{\phi(r)}  \le  \frac{R}{r}
\quad \text{ and }\quad
\frac{\phi(R)}{\phi(r)} \ge \frac{a_1}{a_3} \frac{R^{\delta_1}}{r^{\delta_3}}
\ge \frac{a_1}{a_3}  \left(\frac{R}{r}\right)^{\delta_1 \wedge \delta_3}.
$$
Combining these with {\bf (H1)} and {\bf (H2)} we get
\begin{equation}\label{e:sc1}
\frac{a_1}{a_3} \left(\frac{R}{r}\right)^{\delta_1 \wedge \delta_3} \le \frac{\phi(R)}{\phi(r)} \le \frac{R}{r}, \quad 0<r<R<\infty\, .
\end{equation}
Now applying this to $\phi^a$, we get
\begin{equation}\label{e:sc2}
\frac{a_1}{a_3}\left(\frac{R}{r}\right)^{\delta_1 \wedge \delta_3} \le \frac{\phi^a(R)}{\phi^a(r)} \le  \frac{R}{r}, \quad a>0,\ 0<r<R<\infty\, .
\end{equation}

Next two lemmas will be used several times in this article.

\begin{lemma}
         \label{l:integral-estimates-phi}
Assume {\bf (H1)} and {\bf (H2)}. Then there exists a constant $N=N(\delta_1,\delta_3)$ such that for all $\lambda >0$
    \begin{equation*}
    \int_{\lambda^{-1}}^\infty r^{-1} \phi (r^{-2})\, dr \le  N \phi(\lambda^2). \label{e:ie-2}
    \end{equation*}
\end{lemma}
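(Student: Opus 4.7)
The plan is to change variables to reduce the integral to one of the form $\int_0^{\lambda^2}\phi(u)u^{-1}\,du$, and then apply the scaling inequality \eqref{e:sc1} to dominate $\phi(u)$ by a power of $u$ times $\phi(\lambda^2)$ on the whole interval $(0,\lambda^2)$.

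First I would substitute $u=r^{-2}$, so $r^{-1}\,dr=-\tfrac12 u^{-1}\,du$ and the limits $r\in(\lambda^{-1},\infty)$ become $u\in(0,\lambda^2)$. This converts the integral into
\[
\int_{\lambda^{-1}}^\infty r^{-1}\phi(r^{-2})\,dr=\frac12\int_0^{\lambda^2}\phi(u)u^{-1}\,du.
\]
So it suffices to bound the right-hand side by a constant multiple of $\phi(\lambda^2)$.

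Next, I would invoke \eqref{e:sc1}, which says that under \textbf{(H1)} and \textbf{(H2)} one has $\phi(R)/\phi(r)\ge (a_1/a_3)(R/r)^{\delta_1\wedge\delta_3}$ for all $0<r<R<\infty$. Setting $R=\lambda^2$ and $r=u\in(0,\lambda^2)$, and writing $\delta:=\delta_1\wedge\delta_3\in(0,1)$, this rearranges to
\[
\phi(u)\le \frac{a_3}{a_1}\left(\frac{u}{\lambda^2}\right)^{\delta}\phi(\lambda^2),\qquad 0<u<\lambda^2.
\]
Substituting this into the integral and computing,
\[
\int_0^{\lambda^2}\phi(u)u^{-1}\,du\le \frac{a_3}{a_1}\phi(\lambda^2)\lambda^{-2\delta}\int_0^{\lambda^2}u^{\delta-1}\,du=\frac{a_3}{a_1\delta}\,\phi(\lambda^2).
\]
Combining the two displays yields the claim with $N=a_3/(2a_1\delta)$.

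There is no real obstacle here: the only point that requires the full force of the hypotheses \textbf{(H1)}--\textbf{(H2)} is the uniform lower scaling exponent $\delta_1\wedge\delta_3>0$, which is exactly what ensures that the singularity of $\phi(u)/u$ at $u=0$ is integrable and that the integral is controlled by $\phi(\lambda^2)$ uniformly in $\lambda$ (the cases $\lambda^2\le 1$ and $\lambda^2\ge 1$ are handled simultaneously because \eqref{e:sc1} is a global statement, not restricted to $u,\lambda^2\le 1$ or $\ge 1$).
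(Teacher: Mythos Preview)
Your proof is correct and follows essentially the same approach as the paper: both arguments reduce the integral via a change of variables and then apply the scaling inequality \eqref{e:sc1} to bound $\phi(u)/\phi(\lambda^2)$ by $(a_3/a_1)(u/\lambda^2)^{\delta_1\wedge\delta_3}$. The only cosmetic difference is that the paper substitutes $r\mapsto \lambda^{-1}r$ to get $\int_1^\infty r^{-1}\phi(\lambda^2 r^{-2})\,dr$ before invoking \eqref{e:sc1}, whereas you substitute $u=r^{-2}$; the computations are equivalent and yield the same constant up to bookkeeping.
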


\begin{proof}
Changing variable $r \to \lambda^{-1}r$, from \eqref{e:sc1} we have
\begin{eqnarray*}
\int_{\lambda^{-1}}^\infty r^{-1} \phi (r^{-2})\, dr
~=~\int_{1}^\infty r^{-1} \phi (\lambda^2 r^{-2})\, dr
&=&\int_{1}^\infty r^{-1} \phi (\lambda^2 r^{-2}) \frac{\phi(\lambda^2)}{\phi(\lambda^2)}\, dr \\
&\leq& \int_{1}^\infty r^{-1 -2 \delta_1 \wedge \delta_3}\, dr \phi(\lambda^2).
\end{eqnarray*}
\end{proof}
\begin{corollary}
\label{cor116}
Assume {\bf (H1)} and {\bf (H2)}. Then there exists a constant $N=N(\delta_1,\delta_3)$ such that for all $\lambda >0$
    \begin{equation*}
    \int_{\lambda^{-1}}^\infty r^{-1} (\phi(r^{-2}))^{1/2}\, dr \le  N (\phi(\lambda^2))^{1/2}, \label{e:ie-2}
    \end{equation*}
\end{corollary}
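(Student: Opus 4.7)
The plan is to reduce this immediately to Lemma \ref{l:integral-estimates-phi}. The key observation is that the map $\phi \mapsto \phi^{1/2}$ preserves the hypotheses: by \cite[Corollary 3.7 (iii)]{SSV} (already cited in the paper just above equation \eqref{11021}), if $\phi$ is a Bernstein function with $\phi(0+)=0$, then so is $\phi^{1/2}$. Taking square roots of the two-sided bounds in {\bf (H1)} yields
$$
\sqrt{a_1}\,\lambda^{\delta_1/2}\,\phi(t)^{1/2} \;\le\; \phi(\lambda t)^{1/2} \;\le\; \sqrt{a_2}\,\lambda^{\delta_2/2}\,\phi(t)^{1/2}, \qquad \lambda \ge 1,\ t \ge 1,
$$
so $\phi^{1/2}$ satisfies {\bf (H1)} with constants $(\sqrt{a_1},\sqrt{a_2},\delta_1/2,\delta_2/2)$. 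Similarly, $\phi^{1/2}$ satisfies {\bf (H2)} with constants $(\sqrt{a_3},\delta_3/2)$; the halved exponents remain in $(0,1)$ and $(0,1]$ respectively.

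Having verified that $\phi^{1/2}$ is a Bernstein function meeting the same assumptions as $\phi$ (with $\delta_1,\delta_3$ replaced by $\delta_1/2,\delta_3/2$), I simply apply Lemma \ref{l:integral-estimates-phi} with $\phi$ replaced by $\phi^{1/2}$. This immediately yields
$$
\int_{\lambda^{-1}}^\infty r^{-1}\,\phi^{1/2}(r^{-2})\,dr \;\le\; N\,\phi^{1/2}(\lambda^2),
$$
which is precisely the claimed estimate, with a constant $N$ that depends only on $\delta_1/2$ and $\delta_3/2$, equivalently only on $\delta_1,\delta_3$.

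There is no real obstacle here; this is a one-line deduction. If one instead preferred a self-contained argument, one can mirror the proof of Lemma \ref{l:integral-estimates-phi} verbatim: substituting $r \mapsto \lambda^{-1} r$ and applying \eqref{e:sc1} (with exponent $\delta_1 \wedge \delta_3$) to the ratio $\phi(\lambda^2 r^{-2})/\phi(\lambda^2)$ gives, after taking square roots, a bound by $r^{-(\delta_1 \wedge \delta_3)} \phi(\lambda^2)^{1/2}$ (up to a multiplicative constant depending on $a_1,a_3$), so that
$$
\int_{\lambda^{-1}}^\infty r^{-1}\,\phi(r^{-2})^{1/2}\,dr \;\le\; N\,\phi(\lambda^2)^{1/2}\int_1^\infty r^{-1-(\delta_1\wedge\delta_3)}\,dr,
$$
and the latter integral converges since $\delta_1 \wedge \delta_3 > 0$.
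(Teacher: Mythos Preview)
Your proof is correct and follows exactly the paper's approach: observe that $\phi^{1/2}$ again satisfies {\bf (H1)} and {\bf (H2)} (with halved exponents) and then apply Lemma~\ref{l:integral-estimates-phi} to $\phi^{1/2}$. The only difference is that you supply more detail than the paper's one-line justification, and your alternative self-contained computation is also fine.
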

\begin{proof}
Since $(\phi(\lambda))^{1/2}$ also satisfies conditions {\bf (H1)} and {\bf (H2)} with different $\delta_1,\delta_3>0$, we  get this corollary directly from the previous lemma.
\end{proof}

\mysection{Upper bounds of $|\phi(\Delta)^{n/2}D^{\beta}p(t,x)|$}\label{s:3}

In this section we give upper bounds of  $|\phi(\Delta)^{n/2}D^{\beta}p(t,x)|$ for any  $n=0,1,2,\cdots$ and  multi-index $\beta$.

Recall $a_t:=(\phi^{-1} (t^{-1}))^{-1/2}$ and so $t\phi(a_t^{-2})=1$. Thus
From \eqref{e:scaling4densbm1} and \eqref{e:psbm_assup}, we have for every $t\in (0,T]$,
\begin{eqnarray}
                                      \label{e:psbm_assup1}
p^{a_t}(1, x) &=& (a_t)^{d}p(t, a_t x) \\
&\le& N(T,\phi,d) \left(    1  \wedge  t \frac{\phi(|a_t x|^{-2})}{|x|^d} \right)
= N \left(1  \wedge  \frac{\phi^{a_t}(|x|^{-2})}{|x|^d} \right). \nonumber
\end{eqnarray}

\begin{lemma}
                            \label{le0new}
For any constant $T>0$ there exists a constant $N=N(T,d,\phi)$ so that for every $t \in (0, T]$
\begin{align*}
|\nabla p^{a_t}(1, x)| \leq N |x| \left(      1   \wedge  \frac{\phi^{a_t}(|x|^{-2})}{|x|^{d+2}} \right),
\end{align*}
\begin{align*}
\sum_{i,j} |\partial_{x_i,x_j } p^{a_t}(1,x)|&\le N  |x|^2  \left(  1  \wedge  \frac{\phi^{a_t}(|x|^{-2})}{|x|^{d+4}} \right)+
N \left( 1  \wedge   \frac{\phi^{a_t}(|x|^{-2})}{|x|^{d+2}} \right),
\end{align*}
and
\begin{align*}
\sum_{|\beta| \leq n}  |D^\beta p^{a_t}(1,x)|&\le N \sum_{n-2m \geq 0, m \in \bN \cup \{0\}} |x|^{n-2m}  \left(      1  \wedge  \frac{\phi^{a_t}(|x|^{-2})}{|x|^{d+2(n-m)}} \right).
\end{align*}

\end{lemma}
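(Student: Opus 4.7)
The plan is to exploit the subordination representation
$$ p^{a_t}(1, x) = \int_0^\infty q_d(s, x)\,\eta^{a_t}_1(ds), \qquad q_d(s, x) := (4\pi s)^{-d/2}\, e^{-|x|^2/(4s)}, $$
where $\eta^{a_t}_1$ is the distribution of $S^{a_t}_1$, to reduce the problem of bounding derivatives of $p^{a_t}$ to the pointwise density estimate \eqref{e:psbm_assup1} applied in a higher dimension. Since Gaussian derivatives decay rapidly, differentiation under the integral is justified, giving
$$ D^\beta p^{a_t}(1, x) = \int_0^\infty D^\beta q_d(s, x)\,\eta^{a_t}_1(ds). $$

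By induction on $|\beta|$ using $D_i q_d = -\tfrac{x_i}{2s}\,q_d$, one checks that
$$ D^\beta q_d(s, x) = q_d(s, x)\sum_{m=0}^{\lfloor |\beta|/2\rfloor} s^{-(|\beta|-m)}\, Q_{\beta, m}(x), $$
where $Q_{\beta, m}$ is a polynomial of degree $|\beta|-2m$ in $x$; in particular
$$ |D^\beta q_d(s, x)| \le N \sum_{m=0}^{\lfloor |\beta|/2\rfloor} s^{-(|\beta|-m)}\, |x|^{|\beta|-2m}\, q_d(s, x). $$
The crux of the argument is the dimension-shift identity $s^{-k} q_d(s, x) = (4\pi)^k\, q_{d+2k}(s, x)$, which absorbs each inverse power of $s$ produced by differentiation into the normalization of a higher-dimensional Gaussian. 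Integrating against $\eta^{a_t}_1$ then yields
$$ \int_0^\infty s^{-(|\beta|-m)} q_d(s, x)\,\eta^{a_t}_1(ds) = (4\pi)^{|\beta|-m}\, p^{a_t}_{d + 2(|\beta|-m)}(1, x), $$
where $p^{a_t}_{d'}$ denotes the transition density of the analogous subordinate Brownian motion (same subordinator $\phi^{a_t}$, underlying Brownian motion in $\bR^{d'}$).

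Since conditions {\bf (A1)} and {\bf (A2)} of Section \ref{s:p} hold for $X^{a_t}$ in every dimension (Lemma \ref{l:phi-property} gives {\bf (A1)} with a dimension-independent $h$, and $p^{a_t}_{d'}(t,\cdot)$ is radially decreasing), \eqref{e:psbm_assup1} applies to $p^{a_t}_{d + 2(|\beta|-m)}(1, x)$ and yields
$$ |D^\beta p^{a_t}(1, x)| \le N \sum_{m=0}^{\lfloor |\beta|/2\rfloor} |x|^{|\beta|-2m}\, \left(1 \wedge \frac{\phi^{a_t}(|x|^{-2})}{|x|^{d + 2(|\beta|-m)}}\right), $$
with $N = N(T, d, |\beta|, \phi)$. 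Specializing to $|\beta|=1$ and $|\beta|=2$ gives the first two displayed bounds, and summing over $|\beta|\le n$ gives the third. The conceptual step is recognizing the dimension-shift identity — this is what turns the naive factor $s^{-(|\beta|-m)}$ into a usable pointwise estimate — while the main technical care needed lies in verifying that the constant in \eqref{e:psbm_assup1} depends on the dimension only through an absolute power (so the sum over $m$ remains under control); beyond that the argument is essentially mechanical.
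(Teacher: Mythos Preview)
Your proposal is correct and follows essentially the same route as the paper: both use the subordination representation, differentiate the Gaussian under the integral, invoke the dimension-shift identity $s^{-k}q_d(s,x)=(4\pi)^k q_{d+2k}(s,(x,0,\dots,0))$ to rewrite each term as a higher-dimensional subordinate density, and then apply \eqref{e:psbm_assup1} in that higher dimension. The paper carries this out explicitly for $|\beta|=1,2,3$ (obtaining e.g.\ $\partial_{x_i}p^{a_t}_d(1,x)=-2\pi x_i\,p^{a_t}_{d+2}(1,(x,0,0))$) and then appeals to repetition of the product rule, while you package the same computation into the general polynomial expansion of $D^\beta q_d$; the content is identical. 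Two minor remarks: your notation $p^{a_t}_{d'}(1,x)$ with $x\in\bR^d$ tacitly uses radiality (the paper writes $(x,0,\dots,0)$ to make the embedding explicit), and your worry about the $d'$-dependence of the constant is moot since only finitely many dimensions $d,d+2,\dots,d+2n$ occur for fixed $n$.
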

\begin{proof}
To distinguish the dimension, we denote
$$
p^{a_t}_d(1,x):=\int_{(0, \infty)}(4\pi s)^{-d/2}\exp\left(-\frac{|x|^2}{4s}\right)\, \eta^{a_t}_1(ds).
$$
By \eqref{e:psbm},
\begin{align*}
\partial_{x_i} p^{a_t}_d(1,x)&=\int_{(0, \infty)}(4\pi s)^{-d/2}\partial_{x_i} \exp\left(-\frac{|x|^2}{4s}\right)\, \eta^{a_t}_1(ds)\nonumber\\
&=-\frac{x_i}{2} \int_{(0, \infty)}s^{-1}(4\pi s)^{-d/2} \exp\left(-\frac{|x|^2}{4s}\right)\, \eta^{a_t}_1(ds)\\
&=-2\pi x_i p^{a_t}_{d+2}(1,(x, 0,0)),
\end{align*}
\begin{align*}
\partial_{x_i,x_i} p^{a_t}_d(1,x)&=4 \pi x_i^2  p^{a_t}_{d+4}(1,(x, 0,0,0,0)) -2\pi  p^{a_t}_{d+2}(1,(x, 0,0)),
\end{align*}
and, for $i \not=j$,
\begin{align*}
\partial_{x_i,x_j } p^{a_t}_d(1,x)&=4 \pi x_i x_j  p^{a_t}_{d+4}(1,(x, 0,0,0,0)).
\end{align*}
Thus
\begin{align*}
|\nabla p^{a_t}_d(1, x)| \leq 2\pi |x| p^{a_t}_{d+2}(1,(x, 0,0))
\end{align*}
and
\begin{align*}
\sum_{i,j} |\partial_{x_i,x_j } p^{a_t}_d(1,x)|&\le 4 d^2 \pi |x|^2  p^{a+t}_{d+4}(1,(x, 0,0,0,0))+2 d \pi  p^{a_t}_{d+2}(1,(x, 0,0)).
\end{align*}
Similarly.
\begin{align*}
\sum_{i,j,k} |\partial_{x_i,x_jx_k } p^{a_t}_d(1,x)|&\le  N(d)[|x|^3  p^{a_t}_{d+6}(1,(x,0,0, 0,0,0,0))+|x|p^{a_t}_{d+4}(1,(x,0,0, 0,0))].
\end{align*}
Repeating the product rule of differentiation and applying \eqref{e:psbm_assup1}, we prove the lemma.
\end{proof}

\begin{lemma}
                                         \label{le1}
For any $(t,x)\in (0,T]\times \bR^d$ and multi-index $\beta$,
\begin{align*}
|\phi(\Delta)^{1/2} D^\beta p(t, \cdot)(x)|\leq \,  N(d, T, |\beta|, \phi)\ \left(t^{-1/2}   \left(\phi^{-1}(t^{-1})\right)^{(d+|\beta|)/2}  \wedge \frac{\phi(| x|^{-2})^{1/2}}{ | x|^{d+|\beta|}}\right).
\end{align*}
\end{lemma}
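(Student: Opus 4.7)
The natural first step is to remove the dependence on $t$ by exploiting the scaling identity \eqref{e:pscale}. Applied to $D^\beta p(t,\cdot)$ (differentiation commutes with $\phi(\Delta)^{1/2}$ and scales as $(a_t)^{-|\beta|}$), this gives
$$
\phi(\Delta)^{1/2}D^\beta p(t,\cdot)(x)=(a_t)^{-d-|\beta|}\,t^{-1/2}\,\phi^{a_t}(\Delta)^{1/2}D^\beta p^{a_t}(1,\cdot)\bigl((a_t)^{-1}x\bigr).
$$
Using $\phi(|x|^{-2})=t^{-1}\phi^{a_t}(|a_t^{-1}x|^{-2})$, the two-sided claim collapses to proving, uniformly in $a=a_t\in(0,a_T]$ with $a_T:=(\phi^{-1}(T^{-1}))^{-1/2}$,
$$
\bigl|\phi^a(\Delta)^{1/2}D^\beta p^a(1,\cdot)(y)\bigr|\le N\Bigl(1\wedge\tfrac{\phi^a(|y|^{-2})^{1/2}}{|y|^{d+|\beta|}}\Bigr),\qquad y\in\bR^d,
$$
with $N=N(d,T,|\beta|,\phi)$. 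Throughout, the scaling estimates \eqref{e:la2}--\eqref{e:sc2} guarantee that all constants depending on $\phi^a$ are uniform in $a\in(0,a_T]$.

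\textbf{Small $|y|\le 1$.} Writing $h:=D^\beta p^a(1,\cdot)$ and using \eqref{e:1over2} with cutoff $r=1$, I split into $|z|\le 1$ and $|z|>1$. On the near piece, a second-order Taylor bound gives $|h(y+z)-h(y)-\nabla h(y)\cdot z|\le N|z|^2$, where the sup of $|D^{\beta+2}p^a(1,\cdot)|$ on $|w|\le 2$ is controlled by Lemma \ref{le0new}; combined with Lemma \ref{l:j-g-uppera} the resulting integrand is $|z|^{2-d}\phi^a(|z|^{-2})^{1/2}$, whose integral over $|z|\le 1$ is finite by \textbf{(H1)}. On the far piece, $|h|$ is uniformly bounded (again by Lemma \ref{le0new}), and $\int_{|z|>1}\wh j^a(|z|)\,dz\le N\phi^a(1)^{1/2}=N$ by Corollary \ref{cor116}.

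\textbf{Large $|y|>1$.} Here I use \eqref{e:1over2} with cutoff $r=|y|/2$ and split into $|z|\le|y|/2$ and $|z|>|y|/2$. For the near part, Taylor plus Lemma \ref{le0new} (applied to derivatives of order $|\beta|+2$, noting $|y+sz|\ge|y|/2$) yields $\sup|D^{\beta+2}p^a|\le N|y|^{-d-|\beta|-2}\phi^a(|y|^{-2})$, and a routine computation using \eqref{e:sc2} shows the $|z|^2$-integral against $\wh j^a$ is bounded by a constant times $|y|$, so the whole near contribution is dominated by the target. For the far part I first absorb the $-h(y)\int_{|z|>|y|/2}\wh j^a$ term using Corollary \ref{cor116} and the pointwise bound $|h(y)|\le N\phi^a(|y|^{-2})/|y|^{d+|\beta|}$. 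What remains is $\int_{|w-y|>|y|/2}h(w)\wh j^a(|w-y|)\,dw$, which I subsplit via $|w|>|y|/2$ versus $|w|\le|y|/2$. On $|w|>|y|/2$ the pointwise decay of $h$ from Lemma \ref{le0new} together with $\int\wh j^a\mathbf 1_{|w-y|>|y|/2}\le N\phi^a(|y|^{-2})^{1/2}$ does the job. On $|w|\le|y|/2$ I invoke the vanishing-moments identity $\int w^\alpha h(w)\,dw=0$ for every $|\alpha|<|\beta|$ (integration by parts using the decay of $p^a$), expand $w\mapsto\wh j^a(|w-y|)$ in Taylor at $w=0$ up to order $|\beta|-1$, and note that the Taylor polynomial contributes zero; the remainder $|R|\le N|w|^{|\beta|}\sup_{s\in[0,1]}|D^{|\beta|}\wh j^a(|y-sw|)|$ combined with the estimate $|D^{|\beta|}\wh j^a(|v|)|\le N|v|^{-d-|\beta|}\phi^a(|v|^{-2})^{1/2}$ (obtained by induction, using $\partial_r\wh j^a(r)=-\pi r\,\wh j^{a,d+2}(r)$ and applying Lemma \ref{l:j-g-uppera} in higher dimensions) and the uniform $L^1$-bound $\int|w|^{|\beta|}|h(w)|\,dw\le N$ produces the required $\phi^a(|y|^{-2})^{1/2}/|y|^{d+|\beta|}$ decay.

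\textbf{Main obstacle.} The delicate point is the last sub-case, where I must simultaneously control (i) the uniformity in $a\in(0,a_T]$ of all constants—forcing systematic use of \eqref{e:la2} and \eqref{e:sc2} and of the fact that the constant in Lemma \ref{l:j-g-uppera} does not depend on $\phi$ or $a$—and (ii) the higher-order $w$-derivatives of $\wh j^a(|w-y|)$, which are not recorded explicitly in the earlier sections and must be derived in parallel. A secondary technical nuisance is that \eqref{e:1over2} uses a fixed cutoff radius; changing it to $|y|/2$ is harmless by the rotational symmetry of $\wh j^a$, but the reader's bookkeeping of the cross-terms between the different radii must be done once and for all at the outset.
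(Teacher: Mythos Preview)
Your overall scheme is sound and yields the same estimate; it differs from the paper's argument mainly in how the ``far'' contribution for $|\beta|\ge 1$ is treated. The paper writes
\[
II=\int_{|z|>|x|/2} D^\beta p^{a_t}(1,x+z)\,\wh j^{a_t}(|z|)\,dz
\]
and integrates by parts $|\beta|$ times in $z$, moving the derivatives from $p^{a_t}$ onto $\wh j^{a_t}$ via the dimension-shift identity $\frac{d}{dr}\wh j^{a_t}_d(r)=-2\pi r\,\wh j^{a_t}_{d+2}(r)$; this produces boundary terms on $\{|z|=|x|/2\}$ and a final bulk term with $p^{a_t}$ against $|\beta|$-th derivatives of $\wh j^{a_t}$, all estimated pointwise. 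Your route---changing variables to $w=x+z$, subsplitting by $|w|$, and on $|w|\le |y|/2$ Taylor-expanding $\wh j^a(|w-y|)$ while invoking the moment conditions $\int w^\alpha D^\beta p^a(1,w)\,dw=0$---is a legitimate dual of this: both ultimately rest on the same derivative bounds for $\wh j^a$ in shifted dimensions. The paper's version is a bit shorter because it avoids the extra $|w|$-split; your version is more uniform in $|\beta|$ and treats $\beta=0$ and $|\beta|\ge 1$ in one stroke.

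Two small points to tighten. First, the vanishing-moment identity holds on $\bR^d$, but your integral is only over $\{|w|\le |y|/2\}$; you must add back the tail $\int_{|w|>|y|/2} w^\alpha h(w)\,dw$, which is $O(|y|^{|\alpha|-|\beta|})$ by the decay of $h$ and, multiplied by $|D^\alpha\wh j^a(|y|)|\le N|y|^{-d-|\alpha|}\phi^a(|y|^{-2})^{1/2}$, lands exactly on the target. Second, in the near piece for large $|y|$ the integral $\int_{|z|\le |y|/2}|z|^2\wh j^a(|z|)\,dz$ is of order $|y|^2$ (not $|y|$ as you state), since $\phi^a(r^{-2})\le 1$ on $r\ge 1$; this is harmless because paired with $|D^{\beta+2}p^a|\le N\phi^a(|y|^{-2})/|y|^{d+|\beta|+2}$ it still gives the desired $\phi^a(|y|^{-2})/|y|^{d+|\beta|}\le \phi^a(|y|^{-2})^{1/2}/|y|^{d+|\beta|}$.
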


\begin{proof}
 First we prove the lemma when $\beta=0$.
Recall
$$a_t:=\frac{1}{\sqrt{\phi^{-1} (t^{-1})}} \le \frac{1}{\sqrt{\phi^{-1} (T^{-1})}}.$$
Note that by \eqref{e:psbmc}
\begin{eqnarray*}
&&|\phi^{a_t}(\Delta)^{1/2}p^{a_t}(1,\cdot) (x)|\\
&=&|\int_{\bR^d}\phi^{a_t}(|\xi|^2)^{1/2} e^{i x \xi}e^{-\phi^{a_t}(|\xi|^2)} d\xi | \\
&\le& \int_{\bR^d}\phi^{a_t}(|\xi|^2)^{1/2} e^{-\phi^{a_t}(|\xi|^2)} d\xi \\
&\le& \int_{|\xi| <1}\phi^{a_t}(|\xi|^2)^{1/2}  d\xi
+\left(\sup_{b>0} b^{1/2} e^{-b/2} \right)
\int_{|\xi| \ge 1} e^{-2^{-1}\phi^{a_t}(|\xi|^2)} d\xi
 \\
&\le& N \phi^{a_t}(1)^{1/2}+N
\int_{|\xi| \ge 1} e^{-2^{-1}\phi^{a_t}(|\xi|^2)} d\xi \le N+N
\int_{|\xi| \ge 1} e^{-2^{-1}\phi^{a_t}(|\xi|^2)} d\xi.
\end{eqnarray*}
Thus it is uniformly bounded by \eqref{e:sc2}. By \eqref{e:1over2},
\begin{align}
&|\phi^{a_t}(\Delta)^{1/2} p^{a_t}(1, \cdot)(x)| \nonumber\\
&=
|\lim_{\eps \downarrow 0}\int_{\{y\in
\bR^d: \, |y|>\eps\}} (p^{a_t}(1,x+y)-p^{a_t}(1,x)) \wh  j^{a_t}(|y|)\, dy| \nonumber\\
&\le
|p^{a_t}(1,x)|
\int_{\{y\in
\bR^d: \, |y|>|x|/2\}}  \wh  j^{a_t}(|y|)\, dy+\int_{\{y\in
\bR^d: \, |y|>|x|/2\}} |p^{a_t}(1,x+y)| \wh  j^{a_t}(|y|)\, dy \nonumber\\
&\quad +
|
\lim_{\eps \downarrow 0}\int_{\{y\in
\bR^d: \,|x|/2 > |y|>\eps\}} \int_0^1 | \nabla p^{a_t}(1,x+sy)| ds |y|\wh  j^{a_t}(|y|)\, dy \nonumber\\
&:=p^{a_t}(1,x) \times I+II+III. \nonumber
\end{align}
Since from \eqref{11021}
\begin{eqnarray*}
\int_{s}^\infty\wh j^a(r) r^{d-1}dr =  a^d \phi(a^{-2})^{-1/2} \int_{s}^\infty  \wh j(ar)r^{d-1}dr
=  \phi(a^{-2})^{-1/2} \int_{as}^\infty  \wh j(t)t^{d-1}dt,
\end{eqnarray*}
we get
\begin{eqnarray*}
\int_{|x|}^\infty\wh j^{a_t}(r) r^{d-1}dr
&=& (a_t)^d \phi((a_t)^{-2})^{-1/2} \int_{|x|}^\infty  \wh j(a_t r)r^{d-1}dr\\
&=&  \phi((a_t)^{-2})^{-1/2} \int_{|x|a_t}^\infty  \wh j(s)s^{d-1}ds\\
&=& \phi^{a_t}(|x|^{-2})^{1/2} \phi((a_t)^{-2}|x|^{-2})^{-1/2}\int_{|x|a_t}^\infty  \wh j(s)s^{d-1}ds.
\end{eqnarray*}
In addition to this, applying Lemma  \ref{l:j-g-uppera} and Corollary \ref{cor116}, we see
$$
\phi((a_t)^{-2}|x|^{-2})^{-1/2}\int_{|x|a_t}^\infty  \wh j(s)s^{d-1}ds  \le N.
$$
Combining these with
$$p^{a_t}(1, x) \le N(T) \left(      1\wedge   \frac{\phi^{a_t}(|x|^{-2})}{|x|^d} \right),$$
we get
\begin{align*}
p^{a_t}(1,x) \times I &\le
N(T) \phi^{a_t}(|x|^{-2})^{1/2}  \left(      1\wedge   \frac{\phi^{a_t}(|x|^{-2})}{|x|^d} \right) \le
N(T)  \frac{\phi^{a_t}(|x|^{-2})^{3/2} }{|x|^d}.
\end{align*}

Using the fact that $r \to j^{a_t}(r)$ is decreasing,
\begin{align*}
II &\le  \wh  j^{a_t}(|x|/2) \int_{\{y\in
\bR^d: \, |y|>|x|/2\}} p^{a_t}(1,x+y)\, dy  \\
&\le  \wh  j^{a_t}(|x|/2) \int_{
\bR^d} p^{a_t}(1,x+y)\, dy \le \wh  j^{a_t}(|x|/2).
\end{align*}

Finally, from Lemma \ref{le0new} we get
\begin{eqnarray*}
&&III =|
\lim_{\eps \downarrow 0}\int_{\{y\in
\bR^d: \,|x|/2 > |y|>\eps\}} \int_0^1 | \nabla p^{a_t}(1,x+sy)| ds |y|\wh  j^{a_t}(|y|)\, dy\\
&&\leq N\int_{|y| < |x|/2}  \int_0^1   \frac{\phi^{a_t}(|x+sy|^{-2})}{|x+sy|^{d+1}} ds
|y| \wh  j^{a_t}(|y|)dy\\
\\
&&\leq N\int_{|y| < |x|/2}  \int_0^1   \frac{\phi^{a_t}((|x|-s|y|)^{-2})}{(|x|-s|y|)^{d+1}} ds
|y| \wh  j^{a_t}(|y|)dy
\\
&&\leq N\int_{|y| < |x|/2}  \int_0^1   \frac{\phi^{a_t}(4|x|^{-2})}{|x|^{d+1}} ds
|y| \wh  j^{a_t}(|y|)dy\\
\\
&&\leq N\frac{\phi^{a_t}(4|x|^{-2})}{|x|^{d+1}} \int_{|y| < |x|/2}
|y| \wh  j^{a_t}(|y|)dy.
\end{eqnarray*}

By Lemma \ref{l:j-g-uppera},
\begin{eqnarray*}
&&\int_{|y| < |x|/2}
|y| \wh  j^{a_t}(|y|)dy \le N \int_{|y| < |x|/2}      |y|^{-d+1}(\phi^{a_t}(|y|^{-2}))^{1/2}dy\\
&& \le N \int_0^{|x|}     (\phi^{a_t}(r^{-2}))^{1/2}dr.
\end{eqnarray*}
Since $|\phi^{a_t}(\Delta)^{1/2} p^{a_t}(1, \cdot)(x)|$ is bounded for $x$, we may assume that $|x| \geq 1$.
So from the monotone property of $\phi^{a_t}(r^{-2})$ and \eqref{e:la2},  we get
\begin{eqnarray*}
&&\int_0^{|x|}     (\phi^{a_t}(r^{-2}))^{1/2}dr\\
&=& \int_0^{1}     (\phi^{a_t}(r^{-2}))^{1/2}dr + \int_{ 1}^{ |x|}     (\phi^{a_t}(r^{-2}))^{1/2}dr\\
&\leq& N \left(\int_0^{ 1}    r^{-\delta_2}dr + \int_{1}^{|x|}(\phi^{a_t}(r^{-2}))^{1/2}dr \right)\\
&\leq& N \left(|x|^{1-\delta_2}+  |x| \phi^{a_t}(1)^{1/2} \right).
\end{eqnarray*}
Thus
$$
|\phi^{a_t}(\Delta)^{1/2} p^{a_t}(1, \cdot)(x)| \le N (1 \wedge \frac{\phi^{a_t}(|x|^{-2})^{1/2}}{|x|^{d}}
).
$$

Now applying \eqref{e:pscale} and using the fact that $t\phi(a_t^{-2})=1$ and $\phi^a(\lambda)=\phi(\lambda a^{-2})/\phi(a^{-2})$, we get
\begin{eqnarray}
&&|\phi(\Delta)^{1/2}p(t,\cdot) (x)|
=(a_t)^{-d}t^{-1/2}|\phi^{a_t}(\Delta)^{1/2}p^{a_t}(1,\cdot) ((a_t)^{-1} x)|\nonumber\\
&&\le \,  N\,  t^{-1/2}\,((a_t)^{-d} \wedge  \frac{\phi^{a_t}(|(a_t)^{-1} x|^{-2})^{1/2}}{| x|^{d}})\nonumber\\
&&=\,  N\, t^{-1/2}\, ((a_t)^{-d}\wedge \frac{\phi(| x|^{-2})^{1/2}}{\phi( (a_t)^{-2})^{1/2} | x|^{d}})\nonumber\\
&&=\,  N\,  \, ( t^{-1/2}(a_t)^{-d} \wedge\frac{\phi(| x|^{-2})^{1/2}}{(t\phi( (a_t)^{-2}))^{1/2} | x|^{d}})\, =\, N\,  \, ( t^{-1/2}(a_t)^{-d} \wedge\frac{\phi(| x|^{-2})^{1/2}}{ | x|^{d}}). \nonumber
\label{e:pscalew}
\end{eqnarray}

 The case $|\beta| =1$ is proved similarly. First, one can  check that
$|\phi^{a_t}(\Delta)^{1/2} D^{\beta}p^{a_t}(1, \cdot)(x)|$
is  uniformly bounded. Note
\begin{eqnarray}
&&|\phi^{a_t}(\Delta)^{1/2} D^\beta p^{a_t}(1, \cdot)(x)| \nonumber\\
&=&
|\lim_{\eps \downarrow 0}\int_{\{y\in
\bR^d: \, |y|>\eps\}} (D^\beta p^{a_t}(1,x+y)-D^\beta p^{a_t}(1,x)) \wh  j^{a_t}(|y|)\, dy| \nonumber\\
&\le&
|D^\beta p^{a_t}(1,x)|
\int_{\{y\in
\bR^d: \, |y|>|x|/2\}}  \wh  j^{a_t}(|y|)\, dy \nonumber\\
&&+|\int_{\{y\in \bR^d: \, |y|>|x|/2\}} D^\beta p^{a_t}(1,x+y) \wh  j^{a_t}(|y|)\, dy| \nonumber\\
&& +|
\lim_{\eps \downarrow 0}\int_{\{y\in
\bR^d: \,|x|/2 > |y|>\eps\}} \int_0^1 | \nabla D^\beta p^{a_t}(1,x+sy)| ds |y|\wh  j^{a_t}(|y|)\, dy \nonumber\\
&:=& |D^{\beta}p^{a_t}(1,x)| \times I+II+III. \label{eqn 12.18.2}
\end{eqnarray}
Since  $I$ and $III$ can be estimated similarly as  in  the case $|\beta|=0$, we only pay attention to the estimation of $II$. We use integration by parts and get
\begin{eqnarray*}
II &\le&   \int_{ |y|=|x|/2} |\wh  j^{a_t}(|x|/2) p^{a_t}(1,x+y)| dS\\
 &&+     \int_{\{y\in \bR^d: \, |y|>|x|/2\}}  \frac{d}{dr} \wh j^{a_t}(|y|)  p^{a_t}(1,x+y)\, dy.
\end{eqnarray*}
We use notation $  \wh j^{a_t}_d(r)$  in place of $\wh j^{a_t}(r)$ to express its dimension. That is,
\begin{equation*}
\wh j^{a_t}_d(r):=\int^{\infty}_0(4\pi t)^{-d/2}e^{-r^2/(4t)}\wh \mu^{a_t}(dt), \qquad r>0.
\end{equation*}
By its definition, we can easily see that $\frac{d}{dr} \wh j^{a_t}_d(r) = -2\pi r  \wh j^{a_t}_{d+2}(r)$.
So from Lemmas  \ref{l:j-g-uppera} and  \ref{le0new} we get
\begin{align*}
II &\le N[ \frac{\phi^{a_t}(|x|^{-2})^{3/2}}{|x|^{d+1}}+  \int_{|y| > |x|/2}  |y|  \wh j^{a_t}_{d+2}(|y|) |p^{a_t}(1,x+y)|  \, dy ]\\
&\le N[ \frac{\phi^{a_t}(|x|^{-2})^{3/2}}{|x|^{d+1}}+  \int_{|y| > |x|/2} \frac{\phi^{a_t}(|y|^{-2})^{1/2}}{|y|^{d+1}} |p^{a_t}(1,x+y)|  \, dy ]\\
&\le N[\frac{\phi^{a_t}(|x|^{-2})^{3/2}}{|x|^{d+1}} +  \frac{\phi^a(|x|^{-2})^{1/2}}{|x|^{d+1}}].
\end{align*}
Therefore we get
$$
|\phi^{a_t}(\Delta)^{1/2} p_{x^i}^{a_t}(1, \cdot)(x)| \le N (1 \wedge \frac{\phi^{a_t}(|x|^{-2})^{1/2}}{|x|^{d+1}}).
$$
By fact that $t\phi(a_t^{-2})=1$ and $\phi^a(\lambda)=\phi(\lambda a^{-2})/\phi(a^{-2})$,
\begin{eqnarray}
\notag &&|\phi(\Delta)^{1/2}p_{x^i}(t,\cdot) (x)|\\
\notag &=& \left|\int_{\bR^d} (-i\xi^i)\phi(|\xi|^2)^{1/2} e^{i x \xi}e^{-t\phi(|\xi|^2)} d\xi \right|\\
\notag &=&t^{-1/2}\left|\int_{\bR^d} (-i\xi^i) (\phi(|\xi|^2)/\phi(a_t^{-2}))^{1/2} e^{i  x \xi}e^{-\phi(|\xi|^2)/\phi(a_t^{-2})} d\xi \right|\\
\notag &=&t^{-1/2}(a_t)^{-d-1}\left|\int_{\bR^d} (-i\xi^i) \phi^{a_t}(|\xi|^2)^{1/2} e^{i (a_t)^{-1} x \xi}e^{-\phi^{a_t}(|\xi|^2)} d\xi \right|\\
\notag &=&(a_t)^{-d-1}t^{-1/2}\left|\phi^{a_t}(\Delta)^{1/2}p_{x^i}^{a_t}(1,\cdot) ((a_t)^{-1} x)\right|\\
\notag &\le& \,  N\,  t^{-1/2} (a_t)^{-d-1} \,\left(1 \wedge  \frac{\phi^{a_t}(|a_t^{-1} x|^{-2})^{1/2}}{|a_t^{-1} x|^{d+1}} \right)\\
\notag &=&\,  N \, t^{-1/2}  (a_t)^{-d-1} \, \left( 1 \wedge \frac{\phi(| x|^{-2})^{1/2}}{\phi( (a_t)^{-2})^{1/2} | a_t^{-1}x|^{d+1}}\right)\\
\notag &=&\,  N\,  t^{-1/2}(a_t)^{-d-1} \, \left( 1 \wedge  t^{1/2} a_t^{d+1} \frac{\phi(| x|^{-2})^{1/2}}{(t\phi( (a_t)^{-2}))^{1/2} | x|^{d+1}} \right)\\
\label{12201}&=&\, N\,  \, \left( t^{-1/2}(a_t)^{-d-1} \wedge\frac{\phi(| x|^{-2})^{1/2}}{ | x|^{d+1}}\right).
\end{eqnarray}
Finally, we consider the case $|\beta| \geq 2$. Introduce $I, II$ and $III$ as in (\ref{eqn 12.18.2}). $I$ and $III$ can be estimated as in the case $|\beta|=0$.
Also, $II$ can be estimated by doing the integration by parts $|\beta|$-times. For instance, if $|\beta|=2$,
\begin{eqnarray*}
II &\le&   \int_{ |y|=|x|/2} |\wh  j^{a_t}(|x|/2) p_{x^i}^{a_t}(1,x+y)| dS
+\int_{ |y|=|x|/2} |\frac{d}{dr} \wh  j^{a_t}(|x|/2) p^{a_t}(1,x+y)| dS \\
&&+\int_{\{y\in \bR^d: \, |y|>|x|/2\}}  \frac{d^2}{dr^2} \wh j^{a_t}(|y|)  p^{a_t}(1,x+y)\, dy,
\end{eqnarray*}
where $dS$ is the surface measure on $\{y\in \bR^d: |y|=|x|/2\}$.
By its definition, we easily see that
$$
\frac{d}{dr} \wh j^{a_t}_d(r) = -2\pi r  \wh j^{a_t}_{d+2}(r), \quad
\frac{d^2}{dr^2} \wh j^{a_t}_d(r) = -2\pi  \wh j^{a_t}_{d+2}(r) +(2\pi)^2r^2 \wh j^{a_t}_{d+4}(r).
$$
So from Lemma  \ref{l:j-g-uppera} and Lemma \ref{le0new} and we get
\begin{align*}
II &\le  N\left( \frac{\phi^{a_t}(|x|^{-2})^{3/2}}{|x|^{d+2}}+  \int_{|y| > |x|/2} |y|^2 \wh j_{d+4}^{a_t}(|y|)|p^{a_t}(1,x+y)|  \, dy \right) \\
&\le  N\left(\frac{\phi^{a_t}(|x|^{-2})^{3/2}}{|x|^{d+2}}+ \int_{|y|>|x|/2}  |y|^{-(d+2)} \phi^{a_t}(|y|^{-2})^{1/2}|p^{a_t}(1,x+y)|  \, dy \right)\\
&\le  N\left(\frac{\phi^{a_t}(|x|^{-2})^{3/2}}{|x|^{d+2}} +  \frac{\phi^a(|x|^{-2})^{1/2}}{|x|^{d+2}}\right).
\end{align*}
Therefore we have
$$
|\phi^{a_t}(\Delta)^{1/2} p_{x^ix^j}^{a_t}(1, \cdot)(x)| \le N\left(1 \wedge \frac{\phi^{a_t}(|x|^{-2})^{1/2}}{|x|^{d+2}}\right)
$$
and we are done  by the scaling  as in \eqref{12201}.
\end{proof}

We generalize Lemma \ref{le1} as follows.

\begin{lemma}
                                      \label{le3}
For any $n\in \bN$ and multi-index $\beta$, there exists a constant $N=N(d,\phi, T,|\beta|,n)>0$  such that
\begin{eqnarray}
                                             \label{1220}
|\phi(\Delta)^{n/2} D^\beta p(t, \cdot)(x)|
 \leq   N \left(t^{-n/2}   (\phi^{-1}(t^{-1}))^{(d+|\beta|)/2}  \wedge t^{-(n-1)/2}\frac{\phi(| x|^{-2})^{1/2}}{ | x|^{d+|\beta|}}\right).
\end{eqnarray}
\end{lemma}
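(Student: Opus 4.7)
The plan is to induct on $n \ge 1$, with the base case $n=1$ supplied by Lemma \ref{le1}. For the inductive step, I exploit the Chapman--Kolmogorov identity $p(t,\cdot) = p(t/2,\cdot) \ast p(t/2,\cdot)$; factoring the Fourier symbol $\phi(|\xi|^2)^{(n+1)/2}(i\xi)^{\beta}e^{-t\phi(|\xi|^2)}$ as the product of $\phi(|\xi|^2)^{1/2}e^{-(t/2)\phi(|\xi|^2)}$ and $\phi(|\xi|^2)^{n/2}(i\xi)^{\beta}e^{-(t/2)\phi(|\xi|^2)}$ yields
\begin{equation*}
\phi(\Delta)^{(n+1)/2}D^{\beta}p(t,\cdot)(x) = \int_{\bR^d}\phi(\Delta)^{1/2}p(t/2,y)\cdot \phi(\Delta)^{n/2}D^{\beta}p(t/2,x-y)\,dy.
\end{equation*}
The key auxiliary estimate is $\|\phi(\Delta)^{1/2}p(t/2,\cdot)\|_{L_1(\bR^d)}\le N t^{-1/2}$, which I establish by applying Lemma \ref{le1} and splitting at $|y| = a_{t/2}$: on $|y|\le a_{t/2}$ the on-diagonal bound integrates against the volume $a_{t/2}^{d} = (\phi^{-1}(2/t))^{-d/2}$ to yield $O(t^{-1/2})$, while on $|y|>a_{t/2}$ the spatial-decay bound is handled by Corollary \ref{cor116}, producing $\phi(a_{t/2}^{-2})^{1/2}$, which is comparable to $t^{-1/2}$ since $(t/2)\phi(a_{t/2}^{-2})=1$.

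I then prove the two bounds in the minimum separately. The on-diagonal bound $t^{-(n+1)/2}(\phi^{-1}(t^{-1}))^{(d+|\beta|)/2}$ follows immediately by pulling the $L_\infty$ bound furnished by the induction hypothesis out of the convolution and applying the $L_1$ estimate above. For the spatial-decay bound $t^{-n/2}\phi(|x|^{-2})^{1/2}/|x|^{d+|\beta|}$ I decompose $\bR^d$ into three pieces: $R_1 = \{|y|<|x|/2\}$, $R_2 = \{|x-y|<|x|/2\}$, and $R_3 = \{|y|\ge|x|/2,\ |x-y|\ge|x|/2\}$. On $R_1$ the inequality $|x-y|>|x|/2$ together with the monotonicity of $r\mapsto \phi(r^{-2})/r^{d+|\beta|}$ lets me apply the induction hypothesis' spatial-decay bound to $\phi(\Delta)^{n/2}D^{\beta}p(t/2,x-y)$ and integrate the first factor via the $L_1$ bound. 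On $R_2$ I apply Lemma \ref{le1}'s spatial-decay bound to $\phi(\Delta)^{1/2}p(t/2,y)$ at radius $\ge|x|/2$ and estimate $\int_{|z|<|x|/2}|\phi(\Delta)^{n/2}D^{\beta}p(t/2,z)|\,dz$ by a further splitting at $|z|=a_{t/2}$, using Corollary \ref{cor116} for the tail. On $R_3$ both factors are in the spatial-decay regime, and the remaining integral $\int_{|z|\ge|x|/2}\phi(|z|^{-2})^{1/2}/|z|^{d+|\beta|}\,dz$ is bounded by $N|x|^{-|\beta|}\phi(|x|^{-2})^{1/2}$ via Corollary \ref{cor116} (or by monotonicity when $|\beta|\ge 1$).

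The main obstacle is that the raw convolution outputs on $R_2$ and $R_3$ produce terms of the form $t^{-n/2}\phi(|x|^{-2})^{1/2}(\phi^{-1}(t^{-1}))^{(d+|\beta|)/2}$ and $t^{-(n-1)/2}\phi(|x|^{-2})/|x|^{d+|\beta|}$ respectively, which match the target $t^{-n/2}\phi(|x|^{-2})^{1/2}/|x|^{d+|\beta|}$ only in the regime $|x|^{2}\phi^{-1}(t^{-1})\ge 1$, equivalently $\phi(|x|^{-2})\le t^{-1}$. This is precisely the regime in which the spatial-decay term is the smaller of the two in the minimum formulation; in the complementary regime the on-diagonal bound is smaller and already dominates the convolution output. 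The weak-scaling consequences \eqref{e:sc1}--\eqref{e:sc2} of \textbf{(H1)}--\textbf{(H2)} are used throughout to absorb multiplicative factors of $2$, e.g.\ comparing $a_{t/2}$ with $a_t$, $|x|/2$ with $|x|$, and $\phi^{-1}(2/t)$ with $\phi^{-1}(1/t)$.
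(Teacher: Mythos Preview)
Your semigroup-convolution approach is a genuinely different route from the paper's. The paper rescales to $t=1$ via $p^{a_t}$, writes $\phi^{a_t}(\Delta)^{1/2}$ as an integral operator against the jump kernel $\wh j^{a_t}$, and splits exactly as in Lemma~\ref{le1}; the crucial point there is that for the term $II$ (integrating $\phi(\Delta)^{(n-1)/2}D^\beta p^{a_t}(1,x+y)$ over $|y|>|x|/2$) they integrate by parts $|\beta|$ times, shifting all spatial derivatives onto $\wh j^{a_t}$ and thereby gaining the extra $|x|^{-|\beta|}$ decay. Your Chapman--Kolmogorov factorization avoids rescaling and the jump kernel entirely, which is cleaner, and the $L_1$ estimate $\|\phi(\Delta)^{1/2}p(t/2,\cdot)\|_{L_1}\le Nt^{-1/2}$ together with the $R_1$ and $R_3$ arguments are correct.

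However, your handling of $R_2$ has a genuine gap when $|\beta|\ge 1$. You assert that the $R_2$ output $t^{-n/2}\phi(|x|^{-2})^{1/2}(\phi^{-1}(t^{-1}))^{(d+|\beta|)/2}$ is bounded by the target $t^{-n/2}\phi(|x|^{-2})^{1/2}|x|^{-(d+|\beta|)}$ precisely when $|x|\ge a_t$, but the inequality $(\phi^{-1}(t^{-1}))^{(d+|\beta|)/2}\le N|x|^{-(d+|\beta|)}$ is equivalent to $|x|\le N a_t$ --- the \emph{opposite} regime. In fact, if you carry out the splitting at $|z|=a_{t/2}$ as you describe, the $R_2$ contribution comes out as
\[
N\,\frac{\phi(|x|^{-2})^{1/2}}{|x|^{d}}\int_{|z|<|x|/2}|\phi(\Delta)^{n/2}D^\beta p(t/2,z)|\,dz
\;\le\; N\,t^{-n/2}\,\frac{\phi(|x|^{-2})^{1/2}}{|x|^{d}}\,a_t^{-|\beta|},
\]
and for $|x|\gg a_t$ this exceeds the spatial-decay target by the factor $(|x|/a_t)^{|\beta|}$. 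Neither pulling the $L_\infty$ bound of the second factor nor refining the tail integral repairs this. (For $|\beta|=0$ there is no issue: the right-hand side already equals the target.) To close the gap you need a mechanism to extract $|x|^{-|\beta|}$ rather than $a_t^{-|\beta|}$ from the near-diagonal piece of the second factor --- for instance, integrating by parts on $R_2$ to transfer $D^\beta$ onto the first factor (which lives at scale $|y|\sim|x|$), at the cost of boundary terms on $\{|x-y|=|x|/2\}$ that must be controlled separately. This is morally the same device the paper uses on its term $II$.
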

\begin{proof}
We use the induction. Due to the previous lemma, the statement is true if $n=1$.
Assume that the statement is true for $n-1$.  We put
\begin{eqnarray*}
&&|\phi^{a_t}(\Delta)^{n/2} D^\beta p^{a_t}(1, \cdot)(x)|\\
&=&
\lim_{\eps \downarrow 0}\int_{\{y\in
\bR^d: \, |y|>\eps\}} (\phi(\Delta)^{(n-1)/2}D^\beta p^{a_t}(1,x+y)-\phi(\Delta)^{(n-1)/2}D^\beta p^{a_t}(1,x)) \wh  j^{a_t}(|y|)\, dy|\\
&\le&
|\phi(\Delta)^{(n-1)/2}p^{a_t}(1,x)|
\int_{\{y\in
\bR^d: \, |y|>|x|/2\}}  \wh  j^{a_t}(|y|)\, dy \\
&& +\int_{\{y\in
\bR^d: \, |y|>|x|/2\}} |\phi(\Delta)^{(n-1)/2} p^{a_t}(1,x+y)| \wh  j^{a_t}(|y|)\, dy\\
&& +|\lim_{\eps \downarrow 0}\int_{\{y\in
\bR^d: \,|x|/2 > |y|>\eps\}} \int_0^1 |  \phi(\Delta)^{1/2} \nabla p^{a_t}(1,x+sy)| ds |y|\wh  j^{a_t}(|y|)\, dy \\
&:=&|\phi(\Delta)^{1/2}p^{a_t}(1,x)|\times I+II+III
\end{eqnarray*}
and
follow the proof of Lemma \ref{le1} with the result \eqref{1220} for $n-1$.  Below, we provide details only for $II$.
Let $|\beta|=0$. Then since  $r \to j^{a_t}(r)$ is decreasing,
\begin{align*}
II &\le  \wh  j^{a_t}(|x|/2) \int_{\{y\in
\bR^d: \, |y|>|x|/2\}} \phi(\Delta)^{(n-1)/2} p^{a_t}(1,x+y)\, dy\\
&\le  \wh  j^{a_t}(|x|/2) \int_{
\bR^d} \phi(\Delta)^{(n-1)/2} p^{a_t}(1,x+y)\, dy \le N \wh  j^{a_t}(|x|/2) \leq N \frac{\phi^{a_t}(|x|^{-2})^{1/2} }{|x|^d}.
\end{align*}
If $|\beta| > 0$ then as in Lemma \ref{le1} we use integration by parts $|\beta|$-times and get
$$
II \leq  N\frac{\phi^{a_t}(|x|^{-2})^{1/2} }{|x|^{d+|\beta|}}.
$$
Therefore, since $|\phi^{a_t}(\Delta)^{n/2} D^\beta p^{a_t}(1, \cdot)(x)|$ is uniformly bounded, we have
$$
|\phi^{a_t}(\Delta)^{n/2} D^\beta p^{a_t}(1, \cdot)(x)|
\leq N\left(\frac{\phi^{a_t}(|x|^{-2})^{1/2} }{|x|^{d+|\beta|}} \wedge 1\right)
$$
and the lemma is proved by the scaling  as in \eqref{12201}.
\end{proof}

\mysection{Proof of Theorem \ref{main theorem}}\label{s:proof}

Let $f\in C^{\infty}_0(\bR^{d+1},H)$. For each  $a\in \bR$ denote
$$
u_a(t,x):=\cG_a(t,x):=  [\int_{a}^t |\phi(\Delta)^{1/2}T_{t-s}f(s,\cdot)(x)|_H^2 ds]^{1/2},
$$
$u(t,x):=u_0(t,x)$ and $\cG(t,x):=\cG_0(t,x)$.

Here is a version of Theorem \ref{main theorem} for $p=2$.
\begin{lemma}
\label{2-1}
 For
 any  $\infty \geq \beta  \geq \alpha \geq -\infty$ and $\beta \geq a$,
\begin{eqnarray}
\label{4023} \|u_a\|^2_{L_2( [\alpha,\beta] \times \bR^d)} \leq
N\||f|_H\|^2_{L_2([a,\beta] \times \bR^d)},
\end{eqnarray}
where $N=N(d)$.
\end{lemma}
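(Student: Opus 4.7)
\medskip
\noindent\textbf{Proof plan.}
The statement is an $L_2$-in-space estimate, so the natural tool is the Plancherel identity combined with the fact that $\phi(\Delta)^{1/2}T_t$ is a Fourier multiplier whose symbol is $\phi(|\xi|^2)^{1/2} e^{-t\phi(|\xi|^2)}$. The key algebraic identity we will exploit is that
$$
\int_0^\infty \phi(|\xi|^2)\, e^{-2r\phi(|\xi|^2)}\,dr \;=\; \tfrac12,
$$
so that the weight $\phi(|\xi|^2)$ exactly cancels against the time integration of the squared semigroup symbol. This is what makes the pointwise-in-$\xi$ estimate possible with a dimension-free constant.

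First, I would unfold the definition and use Fubini to swap the order of integration in $(t,s)$. Since $s \in [a,t]$ and $t \in [\alpha,\beta]$, for fixed $s \in [a,\beta]$ the variable $t$ ranges over $[\max(s,\alpha),\beta]$. Hence
$$
\|u_a\|^2_{L_2([\alpha,\beta]\times \R^d)}
\;=\;
\int_{a}^{\beta}\!\int_{\max(s,\alpha)}^{\beta}\!\int_{\R^d}|\phi(\Delta)^{1/2}T_{t-s}f(s,\cdot)(x)|_H^2\,dx\,dt\,ds.
$$
Next, by the Plancherel identity applied to the $H$-valued function $x\mapsto \phi(\Delta)^{1/2}T_{t-s}f(s,\cdot)(x)$, whose Fourier transform is $\phi(|\xi|^2)^{1/2}e^{-(t-s)\phi(|\xi|^2)}\widehat{f}(s,\xi)$, the inner space integral equals
$$
(2\pi)^{-d}\int_{\R^d}\phi(|\xi|^2)\,e^{-2(t-s)\phi(|\xi|^2)}|\widehat{f}(s,\xi)|_H^2\,d\xi.
$$

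Then I would compute the $t$-integral pointwise in $\xi$:
$$
\int_{\max(s,\alpha)}^{\beta}\phi(|\xi|^2)\,e^{-2(t-s)\phi(|\xi|^2)}\,dt
\;=\;\tfrac12\bigl(e^{-2(\max(s,\alpha)-s)\phi(|\xi|^2)}-e^{-2(\beta-s)\phi(|\xi|^2)}\bigr)\;\le\;\tfrac12.
$$
Inserting this bound and applying Plancherel in reverse yields
$$
\|u_a\|^2_{L_2([\alpha,\beta]\times\R^d)}
\;\le\;\tfrac12(2\pi)^{-d}\int_{a}^{\beta}\!\int_{\R^d}|\widehat{f}(s,\xi)|_H^2\,d\xi\,ds
\;=\;\tfrac12\,\||f|_H\|^2_{L_2([a,\beta]\times\R^d)},
$$
which is the desired inequality (in fact with $N=1/2$, independent of $d$).

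There is really no obstacle here; the only things to be careful about are the Fourier normalization convention adopted at the end of the introduction and keeping the domains of integration straight after the Fubini swap, because $\alpha$ may be larger or smaller than $a$. The Bernstein-function hypotheses \textbf{(H1)}--\textbf{(H2)} and the delicate pointwise kernel bounds from Sections \ref{s:p}--\ref{s:3} play no role at the $L_2$ level; they will be needed only for the $p>2$ sharp-function argument later.
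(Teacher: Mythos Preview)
Your proof is correct and follows essentially the same approach as the paper's: both arguments rest on the Plancherel identity for $H$-valued functions and the elementary bound $\int_0^\infty \phi(|\xi|^2)e^{-2r\phi(|\xi|^2)}\,dr=\tfrac12$. The only cosmetic differences are that the paper applies Plancherel before the Fubini swap in $(t,s)$ and then extends the $t$-integral to $[0,\infty)$, whereas you swap first and compute the $t$-integral exactly; your observation that the resulting constant is actually $N=\tfrac12$ (independent of $d$) is a slight sharpening of the paper's stated $N=N(d)$.
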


\begin{proof}
By the continuity of $f$, the range of $f$ belongs to a
separable subspace of $H$. Thus by using  a countable orthonormal
basis of this subspace and the Fourier transform one easily finds

\begin{eqnarray*}
&& \|u_a\|^2_{L_2([\alpha,\beta] \times \bR^{d})}\\
&=& (2\pi)^d \int_{\bR^d} \int_{\alpha}^\beta  \int_{a}^t |\cF\{\phi(\Delta)^{1/2}p (t-s, \cdot)\}(\xi)|^2 \, |\cF(f)(s,\xi)|^2_H ds dt d\xi\\
 &\leq& (2\pi)^d \int_{\bR^d} \int_{a}^\beta \int_{\alpha}^\beta I_{0 \leq t-s} \phi(|\xi|^2)e^{-2(t-s)(\phi(|\xi|^2))} dt|\cF(f)(s,\xi)|^2_H  ds d\xi.
\end{eqnarray*}
Changing $t-s \to t$, we find that the last term above is equal to
\begin{eqnarray*}
&& (2\pi)^d \int_{\bR^d} \int_{a}^\beta \int_{\alpha-s}^{\beta-s} I_{0 \leq t}
\phi(|\xi|^2)e^{-2t(\phi(|\xi|^2))} dt |\cF(f)(s,\xi)|^2_H
ds d\xi\\
&\leq& (2\pi)^d \int_{\bR^d} \int_{a}^\beta \int_{0}^{\infty}
\phi(|\xi|^2)e^{-2t(\phi(|\xi|^2))} dt |\cF(f)(s,\xi)|^2_H
ds d\xi.
\end{eqnarray*}
Since $\int_{0}^{\infty}\phi(|\xi|^2)e^{-2t(\phi(|\xi|^2))} dt =1/2$, we have
\begin{eqnarray*}
\|u_a\|^2_{L_2( [\alpha,\beta] \times \bR^{d} )} \leq N \int_{a}^\beta
\int_{\bR^d} |\hat{f}(s,\xi)|^2_H ~d\xi ds.
\end{eqnarray*}
The last expression is equal to the right-hand side of (\ref{4023}),
and therefore  the lemma is proved.
\end{proof}

For $c>0$ and $(r,z) \in \bR^{d+1}$, we denote
$$
 B_c(z)= \{ y \in \bR^d : |z-y| <c\}, \quad
  \hat{B}_c(z) = \prod_{i=1}^d(z^i-c/2,z^i+c/2),
  $$
  $$
   I_c(r)= ({r-\phi(c^{-2})^{-1}},\, r+\phi(c^{-2})^{-1}), \quad  Q_c(r,z)=  I_r(c)\times \hat{B}_c(z).
 $$
 Also we denote
 $$
 Q_c(r) =Q_c(r,0), \quad \hat{B}_c= \hat{B}_c(0), \quad B_c= B_c(0).
 $$
For a measurable function $h$ on $\bR^d$, define the
maximal functions
$$
\bM_x h(x) := \sup_{r>0} \frac{1}{|B_r(x)|} \int_{B_r(x)} |h(y)| dy,
$$
$$
\cM_x h(x) := \sup_{\hat B_r(z) \ni x} \frac{1}{|\hat B_r(z)|} \int_{\hat B_r(z)} |h(y)| dy.
$$
 Similarly,
for a measurable function $h=h(t)$ on $\bR$,
$$
\bM_t h(t) := \sup_{r>0} \frac{1}{2r} \int_{-r}^r |h(t+s)|\, ds.
$$
Also for a function $h=h(t,x)$, we set
$$
\bM_x h(t,x) := \bM_x(h(t,\cdot))(x),
\quad \cM_x h(t,x) := \cM_x(h(t,\cdot))(x),
$$
$$
 \bM_t\bM_xh(t,x) = \bM_t(\bM_xh(\cdot,x))(t), \quad \cM_x\bM_t\bM_x h(t,x) =\cM_x(\bM_t\bM_x h(t,\cdot))(x).
 $$

\vspace{3mm}

Since we  have estimates of $p(t,x)$ only for  $t\leq T$,  we introduce the following functions.
Denote $\hat{p}(t,x)=p(t,x)$ if $t\in [0,T]$, and $\hat{p}(t,x)=0$ otherwise. Define $\hat{T}_tf(x)= \int_{\bR^d} \hat p(t,y) f(x-y)dy$,
and for every $a\in \bR$
$$
\hat u_a(t,x):=\hat \cG_a(t,x):=  \begin{cases} \left[\int_{a}^t |\phi(\Delta)^{1/2}\hat T_{t-s}f(s,\cdot)(x)|_H^2 ds \right]^{1/2} &:\,t  \geq a \\
\left[\int_{a}^{2a-t} |\phi(\Delta)^{1/2}\hat T_{2a-t-s}f(s,\cdot)(x)|_H^2 ds \right]^{1/2}  &: \, t < a.
\end{cases}
$$
We use  $\hat u(t,x)$ and $\hat \cG(t,x)$ in place of $\hat u_0(t,x)$ and $\hat \cG_0(t,x)$ respectively.  Obviously,
Lemmas \ref{le1} and \ref{le3} hold with $\hat p(t,x)$ instead of $p(t,x)$ (for all $t$).
Moreover,
\begin{equation}
               \label{eqn 12.20.1}
\hat u_a (t,x) =\hat u_a(2a-t,x) \quad \forall \,t\in \bR, \quad \quad  \hat{u}_a(t,x)\leq u_a(t,x) \quad  \text{if}\,\,\, t\geq a,
\end{equation}
\begin{equation}
                      \label{eqn 12.20.2}
 \hat{u}_a(t,x)=u_a(t,x) \quad \text{if}\,\,\, t\in [a, T+a].
\end{equation}

\begin{lemma}
                                    \label{co1}
Assume that  the support of $f$ belongs to $\bR \times B_{3dc}$. Then for any $c>0$ and $(t,x) \in Q_c(r)$
\begin{eqnarray*}
 \int_{Q_c(r)} |\hat u_a(s,y)|^2 ~dsdy \leq N [|r- a| + \phi(c^{-2})^{-1}]c^d  \bM_t \bM_x |f|_H^2 (t,x),
\end{eqnarray*}
where $N$ depends only on $d$.
\end{lemma}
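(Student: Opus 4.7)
The approach is to use the reflection symmetry of $\hat u_a$ across $t=a$ to reduce to a forward interval in $[a,\infty)$, apply the Plancherel-style bound of Lemma~\ref{2-1}, and then dominate the resulting space-time integral of $|f|_H^2$ by the product of maximal functions using the compact spatial support of $f$.

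First, by $\hat u_a(s,y)=\hat u_a(2a-s,y)$ from \eqref{eqn 12.20.1} and the change of variable $\sigma=2a-s$,
\begin{equation*}
\int_{I_c(r)\cap(-\infty,a)}|\hat u_a(s,y)|^2\,ds=\int_{I_c(2a-r)\cap(a,\infty)}|\hat u_a(\sigma,y)|^2\,d\sigma.
\end{equation*}
Both $I_c(r)\cap[a,\infty)$ and $I_c(2a-r)\cap[a,\infty)$ lie inside the forward slab $[a,\,a+|r-a|+\phi(c^{-2})^{-1}]$, since $r$ and $2a-r$ are each within distance $|r-a|$ of $a$. Hence
\begin{equation*}
\int_{I_c(r)}|\hat u_a(s,y)|^2\,ds\le \int_a^{a+|r-a|+\phi(c^{-2})^{-1}}|\hat u_a(s,y)|^2\,ds.
\end{equation*}
Using $\hat u_a\le u_a$ for $s\ge a$ (also from \eqref{eqn 12.20.1}) and enlarging $\hat B_c$ to $\bR^d$, Lemma~\ref{2-1} with $\alpha=a$ and $\beta=a+|r-a|+\phi(c^{-2})^{-1}$ gives
\begin{equation*}
\int_{Q_c(r)}|\hat u_a|^2\,ds\,dy\le N\int_a^{a+|r-a|+\phi(c^{-2})^{-1}}\int_{\bR^d}|f(s,y)|_H^2\,dy\,ds.
\end{equation*}

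To pass to maximal functions, note that $\mathrm{supp}\,f(s,\cdot)\subset B_{3dc}$, and any $x\in\hat B_c$ satisfies $|x|\le \sqrt{d}\,c/2$, so $B_{3dc}\subset B_{Nc}(x)$ with $N=N(d)$. Therefore
\begin{equation*}
\int_{\bR^d}|f(s,y)|_H^2\,dy=\int_{B_{3dc}}|f(s,y)|_H^2\,dy\le Nc^d\,\bM_x|f|_H^2(s,x).
\end{equation*}
Finally, for any $t\in I_c(r)$ a direct check using $|t-r|<\phi(c^{-2})^{-1}$ shows that $[a,\,a+|r-a|+\phi(c^{-2})^{-1}]\subset[t-R,t+R]$ for some $R\le 2(|r-a|+\phi(c^{-2})^{-1})$, whence
\begin{equation*}
\int_a^{a+|r-a|+\phi(c^{-2})^{-1}}\bM_x|f|_H^2(s,x)\,ds\le 2R\,\bM_t\bM_x|f|_H^2(t,x)\le N[|r-a|+\phi(c^{-2})^{-1}]\bM_t\bM_x|f|_H^2(t,x).
\end{equation*}
Combining the displays proves the claim.

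The only delicate point is the reflection step: $I_c(r)$ can lie above, below, or straddle $a$, and one must observe that in all three cases the correct quantity controlling the length of the effective forward slab is $|r-a|+\phi(c^{-2})^{-1}$ (rather than just $\phi(c^{-2})^{-1}$); this is what produces the factor $|r-a|+\phi(c^{-2})^{-1}$ in the conclusion. Once that observation is made, the remainder of the argument is a routine application of the $L^2$ estimate of Lemma~\ref{2-1} together with the elementary ball-in-cube inclusion.
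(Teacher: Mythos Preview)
Your proof is correct and follows essentially the same approach as the paper: reflect $\hat u_a$ across $t=a$ via \eqref{eqn 12.20.1}, apply the $L_2$ bound of Lemma~\ref{2-1} on the resulting forward time interval, use the spatial support hypothesis to pass to $\bM_x$, and then control the time integral by $\bM_t$. The only cosmetic point is that after the reflection the two forward pieces $I_c(r)\cap[a,\infty)$ and $I_c(2a-r)\cap(a,\infty)$ may overlap, so the inequality before Lemma~\ref{2-1} should carry a harmless factor of~$2$ (absorbed into $N$). Your bookkeeping is in fact tidier than the paper's: by observing directly that the effective forward slab has length and distance from $t$ both comparable to $|r-a|+\phi(c^{-2})^{-1}$, you avoid the case-by-case analysis the paper carries out for its integrals $I$ and $J$.
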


\begin{proof}
Fix $(t,x)\in Q_c(r)$. Using  (\ref{eqn 12.20.1}) and  Lemma \ref{2-1}, we get
\begin{eqnarray*}
&&\int_{Q_c(r)} |\hat u_a(s,y)|^2 ~dsdy\\
&\leq& \int_{(r-\phi(c^{-2})^{-1}) \wedge a}^{ (r+\phi(c^{-2})^{-1}) \vee a} \int_{\bR^d}|\hat u_a(s,y)|^2  dyds \\
&=& \int_{\bR^d}\left[\int_{(r-\phi(c^{-2})^{-1}) \wedge a}^{ a} |\hat u_a(2a-s,y)|^2  ds
+ \int_a^{ (r+\phi(c^{-2})^{-1}) \vee a} |\hat u_a(s,y)|^2  ds\right] dy \\
&=& \int_{\bR^d}\left[\int_{a}^{2a-[(r-\phi(c^{-2})^{-1}) \wedge a] } |\hat u_a(s,y)|^2  ds + \int_a^{ (r+\phi(c^{-2})^{-1}) \vee a} |\hat u_a(s,y)|^2 ds\right] dy \\
&\leq& N\int_{B_{3dc}}\left[\int_{a}^{2a-[(r-\phi(c^{-2})^{-1}) \wedge a]}  |f(s,y)|_H^2ds +  \int_{a}^{(r+ \phi(c^{-2})^{-1})\vee a }  |f(s,y)|_H^2ds\right]dy.
\end{eqnarray*}
Since  $|x-y| \leq |x|+|y| \leq   4dc$ for any $(t,x) \in Q_c(r)$ and $y
\in B_{3dc}$, the last term above is less than or equal to constant times of
$$
\int_{|x-y|\leq 4dc}\left[\int_{a}^{2a-[(r-\phi(c^{-2})^{-1}) \wedge a]}  |f(s,y)|_H^2ds +  \int_{a}^{(r+ \phi(c^{-2})^{-1})\vee a }  |f(s,y)|_H^2ds\right]dy
$$
\begin{eqnarray*}
 &\leq& Nc^d \left[\int_{a}^{2a-[(r-\phi(c^{-2})^{-1}) \wedge a]} \bM_x|f(s,x)|_H^2ds +  \int_{a}^{(r+ \phi(c^{-2})^{-1})\vee a }\bM_x |f(s,x)|_H^2ds\right]\\
 &\leq& N [|r- a| + \phi(c^{-2})^{-1}]c^d  \bM_t \bM_x |f|_H^2 (t,x).
\end{eqnarray*}
In order to explain the last inequality above, we denote
$$
\int_{a}^{2a-[(r-\phi(c^{-2})^{-1}) \wedge a]} \bM_x|f(s,x)|_H^2ds +  \int_{a}^{(r+ \phi(c^{-2})^{-1})\vee a }\bM_x |f(s,x)|_H^2ds := I + J.
$$
First we estimate $I$. $I=0$ if  $ r - \phi(c^{-2})^{-1} \geq a$.  So assume $r - \phi(c^{-2})^{-1}  < a$.

If $a \leq t \leq 2a - (r-\phi(c^{-2})^{-1})$, then we can easily get
$$
I \leq [|r- a| + \phi(c^{-2})^{-1}]  \bM_t \bM_x |f|_H^2 (t,x).
$$
If $t > 2a-(r-\phi(c^{-2})^{-1})$ and $t \geq a$, then
\begin{eqnarray*}
I \leq \int_{a}^{t+(t-a)} \bM_x|f(s,x)|_H^2ds &\leq& 2(t-a) \bM_t \bM_x |f|_H^2 (t,x) \\
&\leq& 2(r+\phi(c^{-2})^{-1} -a) \bM_t \bM_x |f|_H^2 (t,x).
\end{eqnarray*}
Finally, if $ t <a$, then
\begin{eqnarray*}
I &\leq& \int_{t}^{2a -(r-\phi(c^{-2})^{-1})} \bM_x|f(s,x)|_H^2ds
\leq \int_{t+t-[2a -(r-\phi(c^{-2})^{-1})]}^{2a -(r-\phi(c^{-2})^{-1})} \bM_x|f(s,x)|_H^2ds \\
&\leq& 2([2a -(r-\phi(c^{-2})^{-1})]-t) \bM_t \bM_x |f|_H^2 (t,x)\\
&\leq& 4 [a -(r-\phi(c^{-2})^{-1})] \bM_t \bM_x |f|_H^2 (t,x).
\end{eqnarray*}
The estimation of $J$ is similar. Therefore, the lemma is proved.
\end{proof}

We will use the following version of integration by parts: if  $0 \leq \varepsilon \leq R \leq \infty$,
and $F$ and $G$ are smooth enough then (see \cite{Kr01})
\begin{eqnarray}
\notag \int_{R \geq |z| \geq \varepsilon} F(z) G(|z|)~dz
= - \int_\varepsilon^R G'(\rho)(\int_{|z| \leq \rho} F(z) dz) d\rho \\
+ G(R) \int_{|z| \leq R} F(z)dz - G(\varepsilon) \int_{|z| \leq
\varepsilon} F(z)dz. \label{4026}
\end{eqnarray}

We generalize Lemma \ref{co1} as follows.

 \begin{lemma}
                                        \label{2-3}
For any $(t,x) \in
 Q_c(r)$
\begin{eqnarray*}
 \int_{Q_c(r)} |\hat u_a(s,y)|^2 ~dsdy \leq N [|r- a| + \phi(c^{-2})^{-1}]c^d \bM_t \bM_x |f|_H^2 (t,x),
\end{eqnarray*}
 where $N=N(d,T,\phi)$.
\end{lemma}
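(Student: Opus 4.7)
The plan is to remove the support restriction in Lemma \ref{co1} by a dyadic spatial decomposition of $f$. Set $R_k:=dc\cdot 2^{k-1}$ for $k\ge 1$, and write $f=\sum_{k\ge 0}f_k$, where $f_0(\tau,z)=f(\tau,z)\mathbf{1}_{\{|z|\le 3dc\}}$ and, for $k\ge 1$, $f_k(\tau,z)=f(\tau,z)\mathbf{1}_{\{3dc\cdot 2^{k-1}<|z|\le 3dc\cdot 2^k\}}$. Denoting by $\hat u^{(k)}_a$ the function obtained from the definition of $\hat u_a$ with $f_k$ in place of $f$, Minkowski's inequality (first in $L^2_\tau(H)$ and then in $L^2(Q_c(r))$) reduces matters to estimating each integral $\int_{Q_c(r)}|\hat u^{(k)}_a|^2\,dsdy$ separately and summing the square roots.

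The piece $k=0$ is covered directly by Lemma \ref{co1}. For $k\ge 1$, the point is that the kernel is applied to a function supported far from $\hat B_c$: for $y\in\hat B_c$ (so $|y|\le dc$) and $z\in\operatorname{supp}f_k$, one has $|y-z|\ge R_k$, and since $\rho\mapsto \rho^{-d}\phi(\rho^{-2})^{1/2}$ is decreasing, Lemma \ref{le1} (with $\beta=0$) gives $|\phi(\Delta)^{1/2}\hat p(s-\tau,y-z)|\le NR_k^{-d}\phi(R_k^{-2})^{1/2}$. Applying the Cauchy--Schwarz inequality in $z$ to the integral representation
\[
|\phi(\Delta)^{1/2}\hat T_{s-\tau}f_k(\tau,\cdot)(y)|_H\le \int |\phi(\Delta)^{1/2}\hat p(s-\tau,y-z)|\,|f_k(\tau,z)|_H\,dz,
\]
together with the maximal--function bound $\int_{|z|\le 3dc\cdot 2^k}|f(\tau,z)|_H^2\,dz\le NR_k^d\,\bM_x|f(\tau,\cdot)|_H^2(x)$ (valid for any $x\in\hat B_c$, since such a ball is contained in $B_{8R_k}(x)$), yields the pointwise estimate $|\phi(\Delta)^{1/2}\hat T_{s-\tau}f_k(\tau,\cdot)(y)|_H^2\le N\phi(R_k^{-2})\,\bM_x|f(\tau,\cdot)|_H^2(x)$. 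Integrating in $\tau\in(a,s)$ (noting that for $(t,x),(s,y)\in Q_c(r)$, the interval $[a,s]$ lies in a symmetric interval around $t$ of half-length $\le N[|r-a|+\phi(c^{-2})^{-1}]$, which is how $\bM_t$ enters), and then integrating over $Q_c(r)$, whose Lebesgue measure is $\le N\phi(c^{-2})^{-1}c^d$, one obtains
\[
\int_{Q_c(r)}|\hat u^{(k)}_a|^2\,dsdy\le N\,\frac{\phi(R_k^{-2})}{\phi(c^{-2})}\,[|r-a|+\phi(c^{-2})^{-1}]\,c^d\,\bM_t\bM_x|f|_H^2(t,x).
\]

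Summability in $k$ is supplied by \eqref{e:sc1}: applied with $R=c^{-2}$ and the ``small'' variable $R_k^{-2}=d^{-2}c^{-2}4^{-(k-1)}$, it yields $\phi(R_k^{-2})/\phi(c^{-2})\le N\,4^{-k(\delta_1\wedge\delta_3)}$, so $\sum_{k\ge 1}[\phi(R_k^{-2})/\phi(c^{-2})]^{1/2}<\infty$. Combining with the $k=0$ bound and squaring the Minkowski estimate finishes the proof. The case $s<a$ is absorbed by the reflection \eqref{eqn 12.20.1}, which merely replaces $s$ by $2a-s$ in the time arguments. The main technical obstacle is the pointwise estimate above: because $\phi(\Delta)$ lacks the clean scaling of the fractional Laplacian, one cannot rescale $Q_c(r)$ to a fixed unit cube, so it is essential that the ratios $\phi(R_k^{-2})/\phi(c^{-2})$ decay geometrically in $k$ uniformly in $c$, which is precisely what the lower weak-scaling in {\bf (H1)}--{\bf (H2)} (encoded by \eqref{e:sc1}) provides.
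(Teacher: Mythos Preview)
Your argument is correct, and it reaches the same conclusion as the paper, but by a somewhat different route for the ``far'' part of $f$.

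The paper splits $f$ into only two pieces via a single cutoff $\zeta$: the near part $\cA=\zeta f$ (handled by Lemma~\ref{co1}, as you do for $f_0$) and the far part $\cB=(1-\zeta)f$, supported outside $B_{2dc}$. For $\cB$ the paper does \emph{not} decompose further; instead it exploits radial symmetry via the integration-by-parts identity \eqref{4026} to rewrite
\[
\phi(\Delta)^{1/2}\hat T_{s-\mu}f(\mu,\cdot)(y)=\int_0^\infty \phi(\Delta)^{1/2}\hat p_{x^1}(s-\mu,\rho e_1)\Bigl(\int_{|z|\le \rho}f(\mu,y-z)\,dz\Bigr)d\rho,
\]
bounds the inner integral by $N\rho^d\,\bM_x|f|_H(\mu,x)$, bounds the radial derivative by Lemma~\ref{le1}, and then invokes Corollary~\ref{cor116} to evaluate $\int_c^\infty \rho^{-1}\phi(\rho^{-2})^{1/2}\,d\rho\le N\phi(c^{-2})^{1/2}$ directly. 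This yields the pointwise estimate $|\phi(\Delta)^{1/2}\hat T_{s-\mu}\cB(\mu,\cdot)(y)|_H\le N\phi(c^{-2})^{1/2}\bM_x|f|_H(\mu,x)$ in one stroke.

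Your dyadic decomposition is the discrete counterpart of that continuous integral: the geometric sum $\sum_k[\phi(R_k^{-2})/\phi(c^{-2})]^{1/2}\le N$ that you extract from \eqref{e:sc1} is exactly the estimate encoded in Corollary~\ref{cor116}. The paper's approach is marginally slicker (no Minkowski summation, no Cauchy--Schwarz step), while yours is more robust in that it does not rely on the radial integration-by-parts trick and would adapt more readily to non-radial kernels. Either way, the essential analytic input is the same: the kernel bound of Lemma~\ref{le1} and the weak lower scaling \eqref{e:sc1}. One minor remark: your Cauchy--Schwarz step is not actually needed; bounding $\int_{\text{supp }f_k}|f_k(\tau,\cdot)|_H\,dz$ directly by $NR_k^d\,\bM_x|f|_H(\tau,x)$ and then using Jensen $(\bM_x|f|_H)^2\le \bM_x|f|_H^2$ (as the paper does) is slightly more direct.
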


\begin{proof}

Take  $\zeta \in C_0^\infty (\bR^d)$ such that $\zeta =1$ in
$B_{2dc}$, $\zeta=0$ outside of $B_{3dc}$, and $0 \leq \zeta \leq 1$. Set $\cA = \zeta f$ and
$\cB = (1-\zeta)f$. By Minkowski's inequality, $\hat \cG_a f \leq \hat \cG_a \cA+
\hat \cG_a \cB$. Since $\hat \cG_a \cA$ can be estimated by Lemma \ref{co1},
 assume that $f(t,x)=0$ for $x \in B_{2dc}$.  Let $e_1 := (1,0,\ldots,0)$ and $s\geq \mu\geq a$. Then since $\phi(\Delta)^{1/2}\hat p(t,x)$ is rotationally invariant with respect to $x$, we have
\begin{eqnarray}
&&|\phi(\Delta)^{1/2} \hat T_{ s-\mu}f(\mu, \cdot)(y)|_H \nonumber \\
&=&|\int_{\bR^d} \phi(\Delta)^{1/2} \hat p (s-\mu,|z|e_1)f( \mu ,y-z) ~dz|_H   \nonumber\\
&=&|\int_0^\infty \phi(\Delta)^{1/2} \hat p_{x^1} (s-\mu ,\rho e_1)   \int_{|z| \leq \rho} f(\mu ,y-z) ~d\rho|_H. \label{eqn 12.21.1}
\end{eqnarray}
For the second equality above, \eqref{4026} is used with $G(|z|)=\phi(\Delta)^{1/2} \hat p (s-\mu,|z|e_1)$ and $F(z)=f( \mu ,y-z)$.
Observe that if $y \in \hat B_c$ then for $ x \in (-c/2 , c/2)^d$
\begin{eqnarray*}
|x-y| \leq 2dc, \quad B_\rho(y) \subset B_{2dc+\rho}(x).
\end{eqnarray*}
Moreover, if $|z| \leq c$, then $|y-z| \leq  2dc$ and $f(\mu,y-z)=0$.
Thus by Corollary \ref{cor116} and  Lemma \ref{le1},
\begin{eqnarray}
|\phi(\Delta)^{1/2} \hat T_{ s- \mu}f(\mu, \cdot)(y)|_H
&\leq&  N\int_c^\infty \frac{\phi(\rho^{-2})^{1/2} }{\rho^{d+1}}\int_{|z| \leq \rho} |f|_H(\mu,y-z)~dz\,d\rho  \nonumber\\
&=&  N\int_c^\infty \frac{\phi(\rho^{-2})^{1/2} }{\rho^{d+1}}\int_{B_{\rho}(y)} |f|_H(\mu,z)~dz\,d\rho \nonumber \\
&\leq&  N\int_c^\infty \frac{\phi(\rho^{-2})^{1/2} }{\rho^{d+1}} \int_{B_{2dc+\rho}(x)} |f|_H(\mu,z)~dz\,d\rho  \nonumber\\
&\leq&  N\int_c^\infty \frac{\phi(\rho^{-2})^{1/2} }{\rho^{d+1}} \int_{B_{2d\rho+\rho}(x)} |f|_H(\mu,z)~dz\,d\rho  \nonumber \\
&\leq&  N\bM_x|f|_H(\mu,x)\int_c^\infty \frac{\phi(\rho^{-2})^{1/2} }{\rho} ~d\rho \nonumber\\
 &\leq& N \phi(c^{-2})^{1/2} \bM_x|f|_H(\mu,x). \nonumber
\end{eqnarray}
By Jensen's
inequality $(\bM_x|f|_H)^2 \leq \bM_x |f|_H^2$, and
therefore,  we get for any $s \geq a$ and $y \in \hat B(c)$
\begin{eqnarray*}
|\hat u_a(s,y)|^2
&\leq& N \phi(c^{-2})  \int_{ a}^{s  } \bM_x |f|_H^2(\mu,x) d\mu.
\end{eqnarray*}
So if $r+\phi(c^{-2})^{-1} \geq s \geq a$, then we have
\begin{eqnarray*}
|\hat u_a(s,y)|^2
&\leq&  N \phi(c^{-2})  [r+\phi(c^{-2})^{-1}-(a \wedge (r-\phi(c^{-2})^{-1}))]  \bM_t\bM_x |f|_H^2(t,x)\\
&\leq&  N \phi(c^{-2})[|r-a| +\phi(c^{-2})^{-1}]  \bM_t\bM_x |f|_H^2(t,x).
\end{eqnarray*}
If $ r-\phi(c^{-2})^{-1} \leq s <a$, then we get
\begin{eqnarray*}
|\hat u_a(s,y)|^2
=|\hat u_a(2a-s,y)|^2 &\leq& N \phi(c^{-2})  \int_{ a}^{2a-s  } \bM_x |f|_H^2(\mu,x) d\mu\\
&\leq&  N \phi(c^{-2})[|r-a| +\phi(c^{-2})^{-1}]  \bM_t\bM_x |f|_H^2(t,x).
\end{eqnarray*}

Therefore, we get for any $(t,x) \in Q_c(r)$
\begin{eqnarray*}
 \int_{Q_{c}(r)} |\hat u_a(s,y)|^2 ~dsdy
 \leq N  [|r -a|+\phi(c^{-2})^{-1}] c^d \cdot \bM_t \bM_x |f|_H^2 (t,x).
\end{eqnarray*}
The lemma is proved.
\end{proof}

 \begin{lemma}
                                        \label{2-4}
Assume $2\phi(c^{-2})^{-1} <r$. Then for any $(t,x) \in Q_c(r)$,
\begin{align}
 &\int_{Q_c(r)} \int_{r-\phi(c^{-2})^{-1}}^{r+\phi(c^{-2})^{-1}} |\hat u(s_1,y)-\hat u(s_2,y)|^2 ~ds_1 ds_2 dy  \nonumber\\
&\leq N  \phi(c^{-2})^{-2} c^d   \left[\bM_t \bM_x |f|_H^2 (t,x) +\cM_x \bM_t  \bM_x |f|_H^2 (t,x)+\cM_x\bM_t\bM_x |f|^2_H(t-T,x)\right], \label{eqn 12.20.4}
\end{align}
 where $N=N(d,T,\phi)$.
\end{lemma}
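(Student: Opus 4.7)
My starting point is the reverse triangle inequality in $L^2_\mu(H)$: writing $A(s,\mu,y):=\phi(\Delta)^{1/2}\hat T_{s-\mu}f(\mu,\cdot)(y)$, so that $\hat u(s,y)=\|A(s,\cdot,y)\mathbf{1}_{[0,s]}\|_{L^2_\mu(H)}$, we have (assuming $s_1>s_2$ by symmetry)
\[|\hat u(s_1,y)-\hat u(s_2,y)|^2 \le I(s_1,s_2,y)+II(s_1,s_2,y),\]
where $I:=\int_{s_2}^{s_1}|A(s_1,\mu,y)|_H^2\,d\mu$ and $II:=\int_0^{s_2}|A(s_1,\mu,y)-A(s_2,\mu,y)|_H^2\,d\mu$. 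The hypothesis $r>2\phi(c^{-2})^{-1}$ guarantees $s_1,s_2>\phi(c^{-2})^{-1}>0$, so the reflection branch in the definition of $\hat u$ never triggers. Setting $h:=\phi(c^{-2})^{-1}$, Fubini for Part $I$ yields
\[\int_{\hat B_c}\int_{r-h}^{r+h}\int_{r-h}^{r+h} I(s_1,s_2,y)\,ds_1 ds_2\,dy \;\le\; 2h\int_{Q_c(r)}|\hat u_{r-h}(s_1,y)|^2 ds_1 dy,\]
and Lemma \ref{2-3} applied with $a=r-h$ (so $|r-a|=h$) bounds this by $Nh^2c^d\bM_t\bM_x|f|_H^2(t,x)=N\phi(c^{-2})^{-2}c^d\bM_t\bM_x|f|_H^2(t,x)$, matching the first summand of \eqref{eqn 12.20.4}.

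For Part $II$, the cutoff $\hat p=0$ outside $[0,T]$ forces a further decomposition $II=II_{\mathrm{sm}}+II_{\mathrm{bd}}$ according to whether $\mu\ge\max(0,s_1-T)$ (both $\hat T_{s_i-\mu}=T_{s_i-\mu}$) or $\max(0,s_2-T)\le\mu<\max(0,s_1-T)$ (in which case $\hat T_{s_1-\mu}f=0$ but $\hat T_{s_2-\mu}f=T_{s_2-\mu}f$). On the smooth range, $\partial_v T_v=\phi(\Delta)T_v$ yields
\[A(s_1,\mu,y)-A(s_2,\mu,y)=\int_{s_2-\mu}^{s_1-\mu}\phi(\Delta)^{3/2}T_v f(\mu,\cdot)(y)\,dv.\]
I would then split $f=\cA+\cB$ via the cutoff $\zeta$ equal to $1$ on $B_{2dc}$ and supported in $B_{3dc}$, as in Lemma \ref{2-3}. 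For $\cA$ a Plancherel-type estimate parallel to Lemma \ref{2-1} absorbs the extra $\phi(\Delta)$-weight against the $|s_1-s_2|$ gain; for $\cB$, the rotational-symmetry identity \eqref{4026} combined with the pointwise bound $|\phi(\Delta)^{3/2}p_{x^1}(v,\rho e_1)|\le Nv^{-1}\rho^{-d-1}\phi(\rho^{-2})^{1/2}$ from Lemma \ref{le3} and Corollary \ref{cor116} gives a $\mu$-dependent bound involving $\bM_x|f|_H(\mu,x)$. To handle the non-integrability of $v^{-1}$ at $v=0$, I split the $\mu$-range into a close region $(s_2-|s_1-s_2|,s_2)$, on which the bound $|A(s_1)-A(s_2)|_H^2\le 2|A(s_1)|_H^2+2|A(s_2)|_H^2$ reduces matters to Lemma \ref{2-3} over a $\mu$-window of length $\le 2h$, and a far region $(0,s_2-|s_1-s_2|)$, on which a dyadic decomposition in $(s_2-\mu)$ trades the divergent $v^{-1}$ for the time-maximal operator $\bM_t$. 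After integrating in $(s_1,s_2,y)$ this produces the target bound $N\phi(c^{-2})^{-2}c^d\cM_x\bM_t\bM_x|f|_H^2(t,x)$.

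The boundary piece $II_{\mathrm{bd}}$ is an integral of $|\phi(\Delta)^{1/2}T_{s_2-\mu}f(\mu,\cdot)(y)|_H^2$ over a $\mu$-window of length $\le|s_1-s_2|\le 2h$ anchored at $\mu\approx t-T$, since $s_2\in Q_c(r)$ and $t\in Q_c(r)$ force $|s_2-t|\le 2h$. Replaying the $\cA+\cB$ decomposition and a Lemma \ref{2-3}-style bound, but with the time-center of the window shifted from $t$ to $t-T$, yields $N\phi(c^{-2})^{-2}c^d\cM_x\bM_t\bM_x|f|_H^2(t-T,x)$. The outer $\cM_x$ (rather than $\bM_x$) arises because the identifications $y\in\hat B_c\ni x$ only force $x$ to lie in the cube $\hat B_c$, not at its center, so the desired pointwise bound on the $y$-averages must be expressed as a maximal average over cubes containing $x$.

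The principal obstacle is the treatment of $II_{\mathrm{sm}}$: the $v^{-1}$ singularity in $\phi(\Delta)^{3/2}p$ at $v=0$ means that a naive Cauchy--Schwarz on the derivative integral fails, and it is precisely the close/far dichotomy described above that produces the sharp $\phi(c^{-2})^{-2}$-scaling. This is also the step where the outer $\cM_x$ enters naturally, by converting the $y\in\hat B_c$ averages into a cube-maximal function centered elsewhere but containing $x$.
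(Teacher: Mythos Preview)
Your overall architecture --- reverse triangle inequality, handling of $I$ via Lemma~\ref{2-3}, a close/far dichotomy in $\mu$, and the identification of the $\hat p$-cutoff as the source of the $(t-T)$-shifted term --- is correct and matches the paper. There is, however, a genuine gap in your treatment of $II_{\mathrm{sm}}$ for the near part $\cA$. A straight Plancherel computation on $\int_0^{s_2}|A(s_1,\mu,y)-A(s_2,\mu,y)|_H^2\,d\mu$ gives, after Fourier and the best available bound $\phi\int\!\!\int|e^{-(s_1-\mu)\phi}-e^{-(s_2-\mu)\phi}|^2\,ds_1ds_2\le Nh$ uniformly in $\xi$ and $\mu$, only
\[
Nh\,\|\cA\|_{L_2([0,r+h]\times\bR^d,H)}^2\;\le\;Nh\,c^d(r+h)\,\bM_t\bM_x|f|_H^2(t,x),
\]
which is one power of $h$ short: the $\mu$-integral runs over a window of length $\sim r$, not $\sim h$, and the $|s_1-s_2|$ gain cancels one extra power of $\phi$, not two. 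The fix is to apply to $\cA$ the \emph{same} close/far split in $\mu$ that you already use for $\cB$; on the far region the exponential decay $e^{-2(s_2-\mu)\phi}$ combines with the Cauchy--Schwarz gain to produce $(s_2-\mu)^{-2}$, and then the integration-by-parts/dyadic step recovers $\bM_t\bM_x$ with the correct $h^2$ scaling. As written, your sentence ``a Plancherel-type estimate parallel to Lemma~\ref{2-1} absorbs the extra $\phi(\Delta)$-weight against the $|s_1-s_2|$ gain'' is not justified.

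The paper takes a different and somewhat cleaner route: it does \emph{not} split $f=\cA+\cB$ at all. It first splits $J^0$ into a near part $\kappa\in[r-2h,s_2]$ (handled by the triangle inequality and Lemma~\ref{2-3}) and a far part $\kappa\in[0,r-2h]$. On the far part it uses the full pointwise bound of Lemma~\ref{le3} --- both branches of the minimum, splitting the $\rho$-integral at $\rho=c$ --- to obtain
\[
|\phi(\Delta)^{3/2}\hat T_{\bar s-\kappa}f(\kappa,\cdot)(y)|_H\;\le\;N(\bar s-\kappa)^{-1}\phi(c^{-2})^{1/2}\,\bM_x|f|_H(\kappa,y),
\]
with the maximal function centered at $y$, not $x$. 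The $(\bar s-\kappa)^{-2}$ that results is then turned into $\bM_t\bM_x|f|_H^2(t,y)$ by integration by parts in $\kappa$, and the outer $\cM_x$ arises only in the final step, when one averages this $y$-dependent bound over $y\in\hat B_c\ni x$. By contrast, your $\cB$-analysis yields $\bM_x|f|_H(\mu,x)$ centered at $x$, so no outer $\cM_x$ is needed there; thus your closing explanation of how $\cM_x$ enters does not match your own argument --- in the paper it is precisely the $y$-centering (in lieu of the $\cA/\cB$ split) that forces it.
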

\begin{proof}
Due to the symmetry, to estimate the left term of (\ref{eqn 12.20.4}), we only consider the case $s_1 >s_2$.
Since $r-\phi(c^{-2})^{-1} > \phi(c^{-2})^{-1} > 0$,  we have $s_2> 0$ for any  $(s_2,y) \in Q_c(r)$. Observe that
by Minkowski's inequality
\begin{eqnarray*}
&&|\hat u(s_1,y)- \hat u(s_2,y)|^2\\
&=&  |(\int_{0}^{s_1} |\phi(\Delta)^{1/2}\hat T_{s_1-\kappa}  f(\kappa,\cdot)(y)|_H^2 d\kappa)^{1/2} -
(\int_{0}^{s_2}  | \phi(\Delta)^{1/2}\hat T_{s_2-\kappa}  f(\kappa,\cdot)(y)|_H^2 d\kappa)^{1/2}|^2 \\
&\leq&  \int_{s_2}^{s_1} |\phi(\Delta)^{1/2}\hat T_{s_1-\kappa}  f(\kappa,\cdot)(y) |_H^2 d\kappa\\
 &&+ \int_{0}^{s_2}  |\phi(\Delta)^{1/2}\hat T_{s_1-\kappa}f(\kappa,\cdot)(y)-\phi(\Delta)^{1/2} \hat T_{s_2-\kappa}  f(\kappa,\cdot)(y) |_H^2 d\kappa \\
 &:=&I^0(s_1,s_2,y) + J^0(s_1,s_2,y).
\end{eqnarray*}

 One can easily estimate $I^0$ using  Lemma \ref{2-3} with $a=s_2$ and $|r-a|\leq 2 \phi(c^{-2})^{-1}$, and thus  we only  need to show that
$$
\int_{Q_c(r)} \int_{r-\phi(c^{-2})^{-1}}^{r+\phi(c^{-2})^{-1}} J^0(s_1,s_2,y)  \, ds_1 ds_2 dy
$$
is less than or equal to the right hand side of (\ref{eqn 12.20.4}).  We divide $J^0$ into two parts :
\begin{eqnarray*}
I:=\int_{0}^{r-2\phi(c^{-2})^{-1}} |\phi(\Delta)^{1/2}\hat T_{s_1-\kappa}f(\kappa,\cdot)(y) - \phi(\Delta)^{1/2}\hat T_{s_2-\kappa}  f(\kappa,\cdot)(y) |_H^2 d\kappa
\end{eqnarray*}
\begin{eqnarray*}
J:=\int_{r-2\phi(c^{-2})^{-1}}^{s_2} |\phi(\Delta)^{1/2}\hat T_{s_1-\kappa}f(\kappa,\cdot)(y) - \phi(\Delta)^{1/2}\hat T_{s_2-\kappa}  f(\kappa,\cdot)(y) |_H^2 d\kappa.
\end{eqnarray*}

Note that since $s_1\geq s_2\geq r-\phi(c^{-2})^{-1}$,  we have  $\eta s_1+(1-\eta)s_2-\kappa \geq \phi(c^{-2})^{-1}$ for  any $\eta \in [0,1]$ and $\kappa\in [0,r-2\phi(c^{-2})^{-1}]$.

If $s_1-\kappa >T $ and $s_2 -\kappa \leq T$ then
$$
|\phi(\Delta)^{1/2}\hat T_{s_1-\kappa}-\phi(\Delta)^{1/2}\hat T_{s_2-\kappa}  f(\kappa,\cdot)(y)|_H^2
=|\phi(\Delta)^{1/2}\hat T_{s_2-\kappa}  f(\kappa,\cdot)(y)|_H^2.
$$
Otherwise, using $\frac{\partial}{\partial t}\phi(\Delta)^{1/2} T_t f(x) = \phi(\Delta)^{3/2} T_t f(x)$, we get
\begin{eqnarray*}
 &&|\phi(\Delta)^{1/2}\hat T_{s_1-\kappa}f(\kappa,\cdot)(y)-\phi(\Delta)^{1/2}\hat T_{s_2-\kappa}  f(\kappa,\cdot)(y)|_H^2\\
 &\leq& (s_1-s_2)^2  \int_0^1 | \phi(\Delta)^{3/2}\hat T_{\eta s_1+(1-\eta)s_2-\kappa}  f(\kappa,\cdot)(y)|_H^2  d\eta.
\end{eqnarray*}
  Therefore,
\begin{eqnarray*}
I&\leq& \int_{s_2-T}^{s_1-T}  |\phi(\Delta)^{1/2}\hat T_{s_2-\kappa}  f(\kappa,\cdot)(y)|_H^2 d\kappa\\
&+&
(s_1-s_2)^2 \int_{0}^{r-2\phi(c^{-2})^{-1}}  \int_0^1 | \phi(\Delta)^{3/2}\hat T_{\eta s_1+(1-\eta)s_2-\kappa}  f(\kappa,\cdot)(y)|_H^2  d\eta d\kappa.
\end{eqnarray*}

Denote $\bar{s}=\bar{s}(\eta)=\eta s_1+(1-\eta)s_2$. As in (\ref{eqn 12.21.1}),
\begin{eqnarray*}
&&|\phi(\Delta)^{3/2} \hat T_{ \bar s-\kappa}f(\kappa, \cdot)(y)|_H\\
&=&|\int_0^\infty \phi(\Delta)^{3/2} \hat p_{x^1} (\bar s-\kappa ,\rho e_1)   \int_{|z| \leq \rho} f(\kappa ,y-z)dz d\rho|_H\\
&\leq& \bM_x |f|_H(\kappa,y) \int_0^{\infty} |\phi(\Delta)^{3/2} \hat p_{x^1} (\bar s-\kappa,\rho e_1)|   \rho^d \, d\rho.
\end{eqnarray*}
'Note that $\bar s-\kappa\geq \phi(c^{-2})^{-1}$ and so $\phi^{-1}((\bar{s}-\kappa)^{-1})\leq c^{-2}$.  By Lemma \ref{le3} and Corollary \ref{cor116},
\begin{eqnarray*}
&&\int_0^{\infty} |\phi(\Delta)^{3/2} \hat p_{x^1} (\bar{s}-\kappa,\rho e_1)|   \rho^d \,d\rho\\
& =&\int^c_0 |\phi(\Delta)^{3/2} \hat p_{x^1} (\bar{s} -\kappa,\rho e_1)| \rho^d \,d\rho
+ \int^{\infty}_c |\phi(\Delta)^{3/2} \hat p_{x^1} (\bar{s} -\kappa,\rho e_1)|  \rho^d\,d\rho \\
&\leq& N \int_0^{c }   (\bar{s}-\kappa)^{-3/2} \phi^{-1}((\bar{s}-\kappa)^{-1})^{(d+1)/2} \rho^dd\rho  + N (\bar{s}-\kappa)^{-1} \int^{\infty}_c \frac{\phi(\rho^{-2})^{1/2} }{\rho^{d+1}}\rho^d \,d\rho\\
&\leq& N  (\bar{s}-\kappa)^{-1}\left[ \phi(c^{-2})^{1/2} c^{-(d+1)} \int_0^{c } \rho^d   d\rho+  \phi(c^{-2})^{1/2}\right]
\leq N (\bar{s}-\kappa)^{-1}\phi(c^{-2})^{1/2}.
\end{eqnarray*}

Similarly, one can check
$$
|\phi(\Delta)^{1/2}\hat T_{s_2-\kappa}  f(\kappa,\cdot)(y)|_H \leq  N \phi(c^{-2})^{1/2} \bM_x |f|_H(\kappa,y).
$$
Therefore, remembering $|s_1-s_2| \leq 2\phi(c^{-2})^{-1}$, we get
\begin{eqnarray}
I&\leq& N\phi(c^{-2})^{-1} \int_0^{r-2\phi(c^{-2})^{-1}}(r-\phi(c^{-2})^{-1}-\kappa)^{-2} \bM_x |f|^2_H(\kappa,y) d\kappa  \label{eqn 12.24.7}\\
&& + N\phi(c^{-2}) \int_{s_2-T}^{s_1-T} \bM_x |f|^2_H(\kappa,y)d\kappa. \nonumber
\end{eqnarray}
Note that $\bM_x |f|^2_H(\kappa,y)$ in (\ref{eqn 12.24.7}) can be replaced by $I_{0<\kappa< r-2\phi(c^{-2})}$ times of it. Thus  by integration by parts,
\begin{eqnarray*}
&&\phi(c^{-2})^{-1} \int_{0}^{r-2\phi(c^{-2})^{-1}} (r-\phi(c^{-2})^{-1}-\kappa)^{-2}  \bM_x |f|^2_H(\kappa,y)  d\kappa \\
&\leq&  N\phi(c^{-2})^{-1}  \int_{-\infty}^{r-2\phi(c^{-2})^{-1}} (r-\phi(c^{-2})^{-1}-\kappa)^{-3}  \int_{\kappa}^{r+\phi(c^{-2})^{-1}} \bM_x |f|^2_H(\nu,y)  d\nu d\kappa \\
&\leq&  N\phi(c^{-2})^{-1} \bM_t\bM_x |f|^2_H(t,y)  \int_{-\infty}^{r-2\phi(c^{-2})^{-1}} \frac{r+\phi(c^{-2})^{-1}-\kappa }{({r-\phi(c^{-2})^{-1}}-\kappa)^3} d\kappa \\
&=&  N\phi(c^{-2})^{-1} \bM_t\bM_x |f|_H^2(t,y)  \int_{\phi(c^{-2})^{-1}}^\infty \frac{\kappa +2\phi(c^{-2})^{-1}}{\kappa^3} d\kappa \\
&\leq&  N \bM_t\bM_x |f|_H^2(t,y).
\end{eqnarray*}
Also,
\begin{eqnarray*}
\phi(c^{-2}) \int_{s_2-T}^{s_1-T} \bM_x |f|^2_H(\kappa,y)d\kappa &\leq& \phi(c^{-2}) \int_{r-\phi(c^{-2})^{-1}-T}^{r+\phi(c^{-2})^{-1}-T} \bM_x |f|^2_H(\kappa,y)d\kappa\\
&\leq& 2\bM_t\bM_x|f|^2_H(t-T,y).
\end{eqnarray*}
Therefore,
$$
I \leq N [\bM_t\bM_x |f|^2_H(t,y)+\bM_t\bM_x |f|^2_H(t-T,y)],
$$
where $N$ depends only on $d, T, a_i,\delta_i$ $(i=1,2,3)$, and  this certainly implies
\begin{eqnarray*}
\notag && \int_{\hat B_c} \int_{r-\phi(c^{-2})^{-1}}^{r+\phi(c^{-2})^{-1}}\int_{r-\phi(c^{-2})^{-1}}^{r+\phi(c^{-2})^{-1}}  I~ ds_1 ds_2 dy \\
&\leq& N  \phi(c^{-2})^{-2}  c^d  [\cM_x\bM_t \bM_x |f|_H^2 (t,x) + \cM_x\bM_t\bM_x |f|^2_H(t-T,x)].
\end{eqnarray*}
It only remains to estimate $J$. Since $s_1 \geq s_2$,
\begin{eqnarray*}
&& \int_{r-2\phi(c^{-2})^{-1}}^{s_2} |\phi(\Delta)^{1/2}\hat T_{s_1-\kappa}f(\kappa,\cdot)(x) - \phi(\Delta)^{1/2}\hat T_{s_2-\kappa}   f(\kappa,\cdot)(y) |_H^2 d\kappa\\
&\leq& 2\int_{r-2\phi(c^{-2})^{-1}}^{s_1} |\phi(\Delta)^{1/2}\hat  T_{s_1-\kappa}f(\kappa,\cdot)(x)|_H^2 d\kappa\\
&&+ 2\int_{r-2\phi(c^{-2})^{-1}}^{s_2}|\phi(\Delta)^{1/2} \hat T_{s_2-\kappa}   f(\kappa,\cdot)(y) |_H^2 d\kappa.
\end{eqnarray*}
Therefore, we are done by Lemma \ref{2-3} with $a= r-2\phi(c^{-2})^{-1}$.
\end{proof}

 \begin{lemma}
                                        \label{2-5}
Assume $2\phi(c^{-2})^{-1} < r$. Then for any $(t,x) \in Q_c(r)$,
\begin{eqnarray*}
&& \int_{r-\phi(c^{-2})^{-1}}^{r+\phi(c^{-2})^{-1}} \int_{\hat B_c}\int_{\hat B_c}  |\hat u(s,y_1)-\hat u(s,y_2)|^2 ~dy_1dy_2ds \\
 &\leq&
 N  \phi(c^{-2})^{-1} c^{2d}   [\bM_t \bM_x |f|_H^2 (t,x) +\cM_x \bM_t  \bM_x |f|_H^2 (t,x)].
\end{eqnarray*}
 where $N=N(d,T,\phi)$.
\end{lemma}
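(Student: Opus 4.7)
The approach parallels Lemma \ref{2-4}, with spatial oscillation replacing temporal oscillation. Apply the $L^2(d\kappa)$-triangle inequality to reduce $|\hat u(s,y_1)-\hat u(s,y_2)|^2$ to $\int_0^s |\Delta\Phi|_H^2\,d\kappa$, where $\Delta\Phi:=\phi(\Delta)^{1/2}\hat T_{s-\kappa}f(\kappa,\cdot)(y_1)-\phi(\Delta)^{1/2}\hat T_{s-\kappa}f(\kappa,\cdot)(y_2)$, and split the $\kappa$-integral at $\kappa_0:=r-2\phi(c^{-2})^{-1}$. For the ``near'' piece $\kappa\in[\kappa_0,s]$, bound $|\Delta\Phi|_H^2\le 2|\Phi(y_1)|_H^2+2|\Phi(y_2)|_H^2$ and apply Lemma \ref{2-3} with $a=\kappa_0$ (so $|r-a|=2\phi(c^{-2})^{-1}$); the extra integration over the other spatial variable supplies a factor $c^d$, producing a contribution of order $N\phi(c^{-2})^{-1}c^{2d}\bM_t\bM_x|f|_H^2(t,x)$.

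For the ``far'' piece $\kappa\in[0,\kappa_0]$ (so $s-\kappa\ge\phi(c^{-2})^{-1}$), use the mean-value theorem and Cauchy--Schwarz to reduce to estimating $|\nabla\phi(\Delta)^{1/2}\hat T_{s-\kappa}f(\kappa,\cdot)(y_\eta)|_H$ on $y_\eta=\eta y_1+(1-\eta)y_2$, with the prefactor $|y_1-y_2|^2\le dc^2$. Since $\phi(\Delta)^{1/2}\hat p(s-\kappa,\cdot)$ is rotationally symmetric, Lemma \ref{le1} (applied with $|\beta|=1$) furnishes a pointwise nonincreasing radial majorant
\begin{equation*}
\tilde G(\rho):=N\Bigl((s-\kappa)^{-1/2}(\phi^{-1}((s-\kappa)^{-1}))^{(d+1)/2}\ \wedge\ \phi(\rho^{-2})^{1/2}\rho^{-d-1}\Bigr)
\end{equation*}
on $|\nabla \phi(\Delta)^{1/2}\hat p(s-\kappa,w)|$. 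Invoking \eqref{4026} in radial form, together with the dyadic maximal bound $\int_{|w|\le R}|f(\kappa,y_\eta-w)|_H\,dw\le NR^d\bM_x|f|_H(\kappa,y_\eta)$, and evaluating $\int_0^\infty\tilde G(\rho)\rho^{d-1}\,d\rho$ via Corollary \ref{cor116} (after splitting at $\rho=(\phi^{-1}((s-\kappa)^{-1}))^{-1/2}$), yields
\begin{equation*}
|\nabla\phi(\Delta)^{1/2}\hat T_{s-\kappa}f(\kappa,\cdot)(y_\eta)|_H\le N(s-\kappa)^{-1/2}(\phi^{-1}((s-\kappa)^{-1}))^{1/2}\bM_x|f|_H(\kappa,y_\eta).
\end{equation*}

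Squaring this bound, using Jensen $(\bM_x|f|_H)^2\le \bM_x|f|_H^2$, and integrating over $\kappa\in[0,\kappa_0]$ by the IBP scheme of Lemma \ref{2-4} with weight $a(\kappa):=(s-\kappa)^{-1}\phi^{-1}((s-\kappa)^{-1})$ (exploiting the monotonicity of $a$ in $\kappa$, the bound $a(\kappa_0)\le c^{-2}\phi(c^{-2})$, and the monotonicity of $v\mapsto\phi^{-1}(v)/v$ together with $\phi^{-1}(v)/v\le c^{-2}/\phi(c^{-2})$ for $v\le\phi(c^{-2})$) controls the $\kappa$-integral by $Nc^{-2}\bM_t\bM_x|f|_H^2(t,y_\eta)$. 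After multiplying by $dc^2$, integrating over $s\in I_c(r)$ (gaining $\phi(c^{-2})^{-1}$), and using the elementary inequality $\int_0^1\int_{\hat B_c\times\hat B_c}g(y_\eta)\,dy_1\,dy_2\,d\eta\le Nc^{2d}\cM_xg(x)$ for $x\in\hat B_c$ (obtained via the affine change of variables $y_1\mapsto y_\eta$ for $\eta\ge 1/2$ together with the $\eta\leftrightarrow 1-\eta$ symmetry), one converts the reference point $y_\eta$ into $x$ and obtains a far contribution of order $N\phi(c^{-2})^{-1}c^{2d}\cM_x\bM_t\bM_x|f|_H^2(t,x)$. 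Adding the near and far contributions gives the lemma. The main technical obstacle is the gradient estimate: one must verify that the extra spatial derivative relative to Lemma \ref{2-3} produces precisely the factor $(\phi^{-1}((s-\kappa)^{-1}))^{1/2}$, so that after the $\kappa$-IBP the resulting $c^{-2}$ exactly cancels $|y_1-y_2|^2\asymp c^2$ and leaves the desired $\phi(c^{-2})^{-1}$.
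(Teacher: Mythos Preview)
Your proposal is correct and follows essentially the same route as the paper's proof: the same split of the $\kappa$-integral at $r-2\phi(c^{-2})^{-1}$, the same application of Lemma~\ref{2-3} for the near piece, and for the far piece the same mean-value reduction to $\nabla\phi(\Delta)^{1/2}\hat T_{s-\kappa}f$, the same radial integration-by-parts via \eqref{4026} with the split at $a_{s-\kappa}=(\phi^{-1}((s-\kappa)^{-1}))^{-1/2}$, the same gradient bound $N(s-\kappa)^{-1/2}(\phi^{-1}((s-\kappa)^{-1}))^{1/2}\bM_x|f|_H$, and finally the same $\cM_x$ manoeuvre to pass from $y_\eta$ to $x$. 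The only cosmetic difference is in how the $\kappa$-integral is handled: the paper makes the substitution $\kappa\mapsto\phi(c^{-2})^{-1}\kappa$ and invokes Lemma~\ref{l:phi-property} directly, whereas you phrase the same step via the monotonicity of $v\mapsto\phi^{-1}(v)/v$; both arguments are equivalent and yield the same $c^{-2}$ factor that cancels $|y_1-y_2|^2$.
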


\begin{proof}
 By Minkowski's inequality,
\begin{eqnarray*}
&&|u(s,y_1) -u(s,y_2)|^2\\
&=&|(\int_0^s |\phi(\Delta)^{1/2} \hat T_{s-\kappa} f(\kappa,y_1) |_H^2 d\kappa)^{1/2}
-(\int_0^s |\phi(\Delta)^{1/2} \hat T_{s-\kappa} f(\kappa,y_2) |_H^2 d\kappa)^{1/2}|^2\\
&\leq& \int_0^s |\phi(\Delta)^{1/2} \hat T_{s-\kappa} f(\kappa,y_1)  - \phi(\Delta)^{1/2} \hat T_{s-\kappa} f(\kappa,y_2) |_H^2 d\kappa\\
&\leq& \int_0^{r-2\phi(c^{-2})^{-1}} |\phi(\Delta)^{1/2} \hat T_{s-\kappa} f(\kappa,y_1)  - \phi(\Delta)^{1/2} \hat T_{s-\kappa} f(\kappa,y_2) |_H^2 d\kappa\\
&&+\int_{r-2\phi(c^{-2})^{-1}}^s |\phi(\Delta)^{1/2} \hat T_{s-\kappa} f(\kappa,y_1)  - \phi(\Delta)^{1/2} \hat T_{s-\kappa} f(\kappa,y_2) |_H^2 d\kappa\\
&:=&I(s,y_1,y_2)+J(s,y_1,y_2).
\end{eqnarray*}

By Lemma \ref{2-3} with $a= r-2\phi(c^{-2})^{-1}$,
$$
\int_{r-\phi(c^{-2})^{-1}}^{r+\phi(c^{-2})^{-1}} \int_{\hat B_c}\int_{\hat B_c}  J(s)~dy_1dy_2ds \leq  N  \phi(c^{-2})^{-1} c^{2d}   [\bM_t \bM_x |f|_H^2 (t,x).
$$
Therefore, we only need to estimate $I$. Let $r-\phi(c^{-2})^{-1}<s< r+\phi(c^{-2})^{-1}$.
Observe that for $(s,y_1), (s,y_2) \in Q_c(r)$
\begin{equation}
                             \label{eqn 12.24.5}
I\leq Nc^{2d} \int_0^1 \int_{0}^{r-2\phi(c^{-2})^{-1}}  |\nabla \phi(\Delta)^{1/2} \hat T_{s-\kappa}  f(\kappa,\cdot)(\eta y_1+(1-\eta)y_2)|_H^2 d\kappa d\eta.
\end{equation}
Recall that $\hat p(s-\kappa,y)=0$ if $s-\kappa > T$. Therefore if $T+r-2\phi(c^{-2})^{-1} < s$, then
$$
c^{2d} \int_0^1 \int_{0}^{r-2\phi(c^{-2})^{-1}}  |\nabla \phi(\Delta)^{1/2} \hat T_{s-\kappa}  f(\kappa,\cdot)(\eta y_1+(1-\eta)y_2)|_H^2 d\kappa d\eta=0.
$$
So we assume $T+r-2\phi(c^{-2})^{-1} \geq s$,
which certainly implies
\begin{eqnarray*}
T \geq s-(r-2\phi(c^{-2})^{-1}) \geq \phi(c^{-2})^{-1}, \quad c \leq \phi^{-1}(\frac{1}{T})^{-1/2}.
\end{eqnarray*}
Moreover from \eqref{e:H1-large} and \eqref{e:H2-small} we see that
\begin{eqnarray}
                       \label{eqn 12.24.1}
c \leq \phi^{-1}(\frac{1}{T})^{-1/2} \leq \left ((\frac{1}{a_2 T})^{-1/(2\delta_2)} \vee (\frac{1}{a_3 T})^{-1/(2\delta_3)} \right).
\end{eqnarray}

Recall $a_t:=  \left(\phi^{-1}(t^{-1})\right)^{-1/2}$ and  $t\phi(a_t^{-2})=1$. For simplicity, denote
$$\bar{y}=\bar{y}(\eta)=\eta y_1+(1-\eta)y_2.
$$
  As before, using \eqref{4026}, we get
\begin{eqnarray*}
&&|\nabla \phi(\Delta)^{1/2} \hat T_{ s-\kappa}f(\kappa, \cdot)(\bar{y})|_H\\
&=&|\int_0^\infty \nabla \phi(\Delta)^{1/2} \hat p_{x^1} (s-\kappa ,\rho e_1)   \int_{|z| \leq \rho} f(\kappa ,\bar{y}-z)dz d\rho|_H\\
&\leq&|\int_0^\infty \nabla \phi(\Delta)^{1/2} \hat p_{x^1} (s-\kappa ,\rho e_1)   \int_{|\bar{y}-z| \leq \rho} f(\kappa ,z)dz d\rho|_H\\
&\leq& N \bM_x |f|_H(s,\bar{y}) \sum_{i=1}^d \int_0^\infty |\phi(\Delta)^{1/2} \hat p_{x^ix^1} (s-\kappa ,\rho e_1)|  \rho^d  d\rho\\
&\leq&N \bM_x |f|_H(s,\bar{y}) \sum_{i=1}^d [\int_0^{a_{s-\kappa} } | \phi(\Delta)^{1/2} \hat p_{x^ix^1} (s-\kappa ,\rho e_1)|  \rho^d d\rho \\
&&+\int_{a_{s-\kappa}}^\infty |\phi(\Delta)^{1/2} \hat p_{x^ix^1} (s-\kappa ,\rho e_1)| \rho^d d\rho]\\
&:=& I_1+I_2.
\end{eqnarray*}
By Lemma \ref{le1},
\begin{eqnarray}
I_1 &\leq& N \bM_x |f|_H(s,\bar y) ( s-\kappa)^{-1/2} \phi^{-1}((s-\kappa)^{-1})^{(d+2)/2} \int_0^{a_{s-\kappa} }   \rho^d d\rho \nonumber\\
&\leq& N \bM_x |f|_H(s,\bar y) ( s-\kappa)^{-1/2} \phi^{-1}((s-\kappa)^{-1})^{1/2}, \label{eqn 12.24.3}
\end{eqnarray}
and by Lemma \ref{le1} and Corollary \ref{cor116},
\begin{eqnarray}
I_2
&\leq&  N \bM_x |f|_H(s,\bar y)  \int_{a_{s-\kappa}}^\infty \frac{\phi(\rho^{-2})^{1/2} }{\rho^2}~d\rho \nonumber\\
&\leq&  N \bM_x |f|_H(s,\bar y) a_{s-\kappa}^{-1} \int_{a_{s-\kappa}}^\infty \frac{\phi(\rho^{-2})^{1/2} }{\rho}~d\rho \nonumber\\
&\leq&  N \bM_x |f|_H(s,\bar y) a_{s-\kappa}^{-1} \phi(a_{s-\kappa}^{-2})^{1/2}\nonumber\\
&=& N \bM_x |f|_H(s,\bar y) ( s-\kappa)^{-1/2} \phi^{-1}((s-\kappa)^{-1})^{1/2}. \label{eqn 12.24.4}
\end{eqnarray}
Therefore, using (\ref{eqn 12.24.3}) and (\ref{eqn 12.24.4}), and coming back to (\ref{eqn 12.24.5}), we get
\begin{eqnarray}
&& c^{2d} \int_0^{r-2\phi(c^{-2})^{-1}} \bM_x |f|^2_H(\kappa,\bar y) ( s-\kappa)^{-1} \phi^{-1}((s-\kappa)^{-1}) d\kappa \nonumber\\
&\leq&  N c^{2d}  \int_{-\infty}^{r-2\phi(c^{-2})^{-1}} \bM_x |f|^2_H(\kappa,\bar y)( r-\phi(c^{-2})^{-1}-\kappa)^{-1} \nonumber\\
&&\quad \quad \quad \quad \quad \quad \quad \quad \quad \quad \quad \quad \quad  \times \, \phi^{-1}((r-\phi(c^{-2})^{-1}-\kappa)^{-1}) \,\,d\kappa  \nonumber\\
&\leq&  N c^{2d}  \int_{\phi(c^{-2})^{-1}}^{\infty} \bM_x |f|^2_H(r-\phi(c^{-2})^{-1}-\kappa,\bar y) \kappa^{-1} \phi^{-1}(\kappa^{-1}) d\kappa  \nonumber\\
&\leq&  N c^{2d}  \int_{1}^{\infty} \bM_x |f|^2_H(r-\phi(c^{-2})^{-1}-\phi(c^{-2})^{-1}\kappa,\bar y) \kappa^{-1} \phi^{-1}( \phi(c^{-2})\kappa^{-1}) d\kappa  \nonumber\\
&\leq&  N c^{2(d-1)} \int_{1}^{\infty} \bM_x |f|^2_H(r-\phi(c^{-2})^{-1}-\phi(c^{-2})^{-1}\kappa,\bar y) \kappa^{-2}  d\kappa. \label{eqn 12.24.6}
\end{eqnarray}
 For the last inequality above, we used Lemma \ref{l:phi-property}. Indeed, for $\kappa \geq 1$ and $t>0$
$$
 \phi(t) \kappa^{-1} = \phi(\kappa t \kappa^{-1}) \kappa^{-1} \leq \phi(t \kappa^{-1}), \quad
\phi^{-1} (\phi(t) \kappa^{-1}) \leq t\kappa^{-1}.
$$
Note that  $|f|^2_H(r-\phi(c^{-2})^{-1}-\phi(c^{-2})^{-1}\kappa,\bar y)$  in (\ref{eqn 12.24.6}) can be replaced by $I_{\kappa>1}$ times of it.
Therefore, by integration by parts

\begin{eqnarray*}
&& c^{2(d-1)} \int_{1}^{\infty} \int_0^{\kappa} \bM_x |f|^2_H(r-\phi(c^{-2})^{-1}-\phi(c^{-2})^{-1}\nu,\bar y) d\nu ~\kappa^{-3}  d\kappa \\
&\leq&  N c^{2(d-1)}\phi(c^{-2})  \int_{1}^{\infty}
 \int_{r-\phi(c^{-2})^{-1}-\phi(c^{-2})^{-1}\kappa}^{r+\phi(c^{-2})^{-1}} \bM_x |f|^2_H(\nu,\bar y) d\nu ~\kappa^{-3}  d\kappa \\
&\leq&  N c^{2(d-1)} \bM_t\bM_x |f|^2_H(t,\bar y) \phi(c^{-2})  \int_{1}^{\infty} (2\phi(c^{-2})^{-1}+\phi(c^{-2})^{-1}\kappa) \kappa^{-3}  d\kappa \\
&\leq&  N c^{2(d-1)} \bM_t\bM_x |f|^2_H(t,\bar y)
\leq  N \bM_t\bM_x |f|^2_H(t,\bar y) ,
\end{eqnarray*}
where (\ref{eqn 12.24.1}) is used for the last inequality.
Thus,
$$
I(s,y_1,y_2) \leq N \int_0^1 \bM_t\bM_x |f|^2_H(t,\eta y_1+(1-\eta)y_2) d\eta,
$$
where $N$ depends only on $d, T, a_i,\delta_i$ $(i=1,2,3)$. Finally, we conclude that
\begin{eqnarray*}
     && \int_{r-\phi(c^{-2})^{-1}}^{r+\phi(c^{-2})^{-1}} \int_{\hat B_c}  \int_{\hat B_c}  I(s,y_1,y_2)\, dy_1dy_2ds \\
&\leq& N \phi(c^{-2})^{-1}\int_0^1 \int_{\hat B_c}  \int_{\hat B_c} \bM_t\bM_x |f|^2_H(t,\eta y_1+(1-\eta)y_2) dy_1 dy_2 d\eta \\
&\leq& N \phi(c^{-2})^{-1}\int_0^1 \int_{\hat B_c}  \int_{\eta \hat B_c+(1-\eta)y_2} \bM_t\bM_x |f|^2_H(t,y)dy dy_2 d\eta \\
&\leq& N \phi(c^{-2})^{-1} \int_{\hat B_c}  \int_{\hat B_c} \bM_t\bM_x |f|^2_H(t,y) dy dy_2 \\
&\leq& N \phi(c^{-2})^{-1} c^{2d} \cM_x\bM_t \bM_x |f|_H^2 (t,x).
\end{eqnarray*}
The lemma is proved.
\end{proof}

For a measurable function $h(t,x)$  on $\bR^{d+1}$ define the sharp function
$$
h^{\#}(t,x) := \sup_{Q_c(r,z) \ni (t,x)}  \frac{1}{| Q_c(r,z)|}  \int_{Q_c(r,z)} |h (s,y)-h_{Q_c(r,z)}|~dsdy,
$$
where
$$
h_{Q_c(r,z)}(x) = \dashint_{Q_c(r,z)} h(s,y)~dsdy:=  \frac{1}{| Q_c(r,z)|} \int_{Q_c(r,z)} h(s,y)~dsdy.$$

The following two theorems are classical results and can be found in \cite{Ste}.

\begin{thm} (Hardy-Littlewood)
\label{hl}
For $1 < p <\infty$ and $f \in L_p(\bR^d)$,
we have
$$
\|\cM_x f \|_{L_p(\bR^d)}+\|\bM_x f \|_{L_p(\bR^d)} \leq N(d,p,\phi) \|f\|_{L_p(\bR^d)}.
$$
\end{thm}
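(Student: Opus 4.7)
The assertion is the classical Hardy--Littlewood maximal inequality in two guises (centered balls and off-center cubes). My plan is the standard Vitali covering plus Marcinkiewicz interpolation argument; despite the notation $N(d,p,\phi)$, the constant in fact depends only on $d$ and $p$, since the $\phi$ never enters into the definition of either maximal operator.

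First I would establish the weak-type $(1,1)$ bound for $\bM_x$. For any $\lambda > 0$ and any compact $K \subset \{x \in \bR^d: \bM_x f(x) > \lambda\}$, for each $x \in K$ pick a radius $r(x) > 0$ so that $\int_{B_{r(x)}(x)} |f(y)|\,dy > \lambda |B_{r(x)}(x)|$. Compactness reduces attention to finitely many such balls, and the Vitali covering lemma then extracts a pairwise disjoint subcollection $\{B_{r_j}(x_j)\}$ whose $5$-fold enlargements cover $K$. This yields
\[
|K| \le 5^d \sum_j |B_{r_j}(x_j)| \le \frac{5^d}{\lambda} \sum_j \int_{B_{r_j}(x_j)} |f|\,dy \le \frac{5^d}{\lambda}\|f\|_{L_1(\bR^d)},
\]
and taking $K$ to exhaust the level set gives the weak $(1,1)$ inequality for $\bM_x$. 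Combining this with the trivial bound $\|\bM_x f\|_{L_\infty} \le \|f\|_{L_\infty}$ and invoking Marcinkiewicz interpolation produces the strong $L_p$ estimate for every $1 < p < \infty$.

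For $\cM_x$, since the supremum is taken over all (not necessarily centered) cubes $\hat B_r(z) \ni x$, every such cube is contained in the centered cube of side $2r$ about $x$; hence $\cM_x f(x) \le 2^d \, \bar{\cM}_x f(x)$, where $\bar{\cM}_x$ denotes the supremum over centered cubes. The very same Vitali covering plus Marcinkiewicz interpolation scheme, with cubes in place of balls and an appropriate dimensional enlargement factor in place of $5$, controls $\bar{\cM}_x$. There is no real obstacle here: the argument is purely geometric and does not interact with the Bernstein function $\phi$ nor with the parabolic structure elsewhere in the paper, which is why the authors simply cite \cite{Ste} rather than reproducing the proof.
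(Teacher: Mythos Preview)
Your argument is correct and is exactly the classical route: weak-type $(1,1)$ via Vitali covering, trivial $L_\infty$ bound, and Marcinkiewicz interpolation, with the off-center cube operator $\cM_x$ dominated pointwise by a centered version up to a dimensional factor. You are also right that the constant depends only on $d$ and $p$; the $\phi$ in $N(d,p,\phi)$ is superfluous here.

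The paper gives no proof of this theorem at all: it is stated as a classical result and referred to \cite{Ste}. So there is nothing to compare---your sketch is precisely the standard argument the citation points to.
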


\begin{thm}(Fefferman-Stein). \label{fst}
For any $1<p<\infty$ and $h\in L_p(\bR^{d+1})$,
\begin{equation*}
\|h\|_{L_p(\bR^{d+1})}\leq N(d,p,\phi)\|h^\#\|_{L_p(\bR^{d+1})}.
\end{equation*}
\end{thm}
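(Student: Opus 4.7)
My plan is to prove the Fefferman--Stein inequality via the classical good-$\lambda$ argument, adapted to the anisotropic parabolic cubes $Q_c(r,z)$ used in the definition of $h^{\#}$. First, let $Mh(t,x):=\sup_{Q_c(r,z)\ni(t,x)}|Q_c(r,z)|^{-1}\int_{Q_c(r,z)}|h(s,y)|\,dsdy$ denote the Hardy--Littlewood maximal function built from this same basis of cubes. Note that $|Q_c(r,z)|=2\phi(c^{-2})^{-1} c^d$, and by \eqref{e:sc1} the ratio $|Q_{2c}(r,z)|/|Q_c(r,z)|$ is bounded by a universal constant depending only on $\delta_1,\delta_3,a_1,a_3$. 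Thus the family $\{Q_c(r,z)\}$ is a doubling basis of rectangles on $\bR^{d+1}$, which supports both a Calder\'on--Zygmund stopping-time decomposition and a Lebesgue differentiation theorem $|h|\le Mh$ a.e.

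The core step is the good-$\lambda$ inequality: there exist $C_1>0$ and $\gamma_0\in(0,1)$ such that for every $\gamma\in(0,\gamma_0]$ and $\lambda>0$,
\begin{equation*}
\bigl|\{(t,x):Mh(t,x)>2\lambda,\ h^{\#}(t,x)\le\gamma\lambda\}\bigr|\le C_1\gamma\,\bigl|\{(t,x):Mh(t,x)>\lambda\}\bigr|.
\end{equation*}
To prove it, I would decompose the open level set $\{Mh>\lambda\}$ into maximal pairwise disjoint ``dyadic'' cubes $Q_j$ arising from the stopping-time construction, so that $h_{Q_j}>\lambda$ but the slightly enlarged parent cube $\widetilde Q_j$ (obtained by replacing the scale $c$ by $2c$) satisfies $h_{\widetilde Q_j}\le C\lambda$ by maximality combined with the doubling property. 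Fix $(t,x)\in Q_j$ with $Mh(t,x)>2\lambda$ and $h^{\#}(t,x)\le\gamma\lambda$; then some cube through $(t,x)$ carries an average of $|h|$ exceeding $2\lambda$, while $h^{\#}(t,x)\le\gamma\lambda$ forces $|h_{\widetilde Q_j}-(\text{that average})|\le C\gamma\lambda$. Consequently $(t,x)$ must lie in $\{|h-h_{\widetilde Q_j}|> c\lambda\}$ on $\widetilde Q_j$, whose measure is at most $(c\lambda)^{-1}\int_{\widetilde Q_j}|h-h_{\widetilde Q_j}|\le C\gamma\lambda\cdot(c\lambda)^{-1}|\widetilde Q_j|\le C'\gamma|Q_j|$ by Chebyshev and the defining property of $h^{\#}$. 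Summing over $j$ yields the claim.

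Finally, I would integrate the good-$\lambda$ inequality against $p\lambda^{p-1}\,d\lambda$:
\begin{align*}
\|Mh\|_p^p &=2^p p\int_0^\infty \lambda^{p-1}|\{Mh>2\lambda\}|\,d\lambda\\
&\le 2^p p\int_0^\infty \lambda^{p-1}|\{Mh>2\lambda,\ h^{\#}\le\gamma\lambda\}|\,d\lambda + 2^p p\int_0^\infty \lambda^{p-1}|\{h^{\#}>\gamma\lambda\}|\,d\lambda\\
&\le 2^p C_1\gamma\,\|Mh\|_p^p + 2^p\gamma^{-p}\|h^{\#}\|_p^p.
\end{align*}
Working first with a truncation that ensures $\|Mh\|_p<\infty$, I choose $\gamma$ so small that $2^p C_1\gamma\le 1/2$, absorb the first term on the right, and obtain $\|Mh\|_p\le N\|h^{\#}\|_p$. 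Using $|h|\le Mh$ a.e.\ from Lebesgue differentiation and letting the truncation exhaust $h$ (by monotone convergence on $|h^{\#}|$) gives the stated inequality.

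The main obstacle I expect is the structural one: verifying that the anisotropic parabolic cubes $Q_c(r,z)$, whose time-side $\phi(c^{-2})^{-1}$ scales with $c$ in a $\phi$-dependent way, form a bona fide doubling basis admitting a Vitali-type covering lemma, a Calder\'on--Zygmund decomposition, and Lebesgue differentiation. Once the scaling estimates \eqref{e:sc1}--\eqref{e:sc2} are invoked to secure the doubling property uniformly in $c$, the standard machinery of Stein's book goes through essentially verbatim; the rest is bookkeeping.
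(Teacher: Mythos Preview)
Your approach is correct, but it differs from the paper's in an important way: the paper does not reprove the Fefferman--Stein inequality. Instead, it simply verifies that the anisotropic cubes $Q_c(r,z)$ satisfy the four axioms (engulfing, doubling, separation, and continuity of the measure function) required by Stein's general sharp-function theorem in spaces of homogeneous type (\cite[Theorem IV.2.2]{Ste}), with the doubling property coming from \eqref{e:sc1}, and then cites that result directly. Your good-$\lambda$ argument is essentially the \emph{proof} of the theorem the paper cites, so you are reproducing the interior of Stein's black box rather than invoking it. This buys you self-containment, but at the cost of having to make precise several steps you gloss over: the Whitney-type covering of $\{Mh>\lambda\}$ in the absence of a genuine dyadic grid (your ``stopping-time'' cubes $Q_j$ do not literally exist here, since halving $c$ does not halve the time-side $\phi(c^{-2})^{-1}$), the weak-$(1,1)$ bound for the anisotropic maximal operator $M$, and the telescoping/chain argument needed to pass from $h^{\#}(t,x)\le\gamma\lambda$ to control of $|h_Q - h_{\widetilde Q_j}|$ when $Q$ is small. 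None of these is a real obstruction --- they are all handled by the general theory once doubling is in place, exactly as you say in your last paragraph --- but the paper's route avoids having to spell them out.
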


\begin{proof}
 We can get this result from Theorem IV.2.2
in \cite{Ste}. Indeed, due to \eqref{e:sc1} we can easily check that  the balls $Q_c(s,y)$ satisfy the
conditions (i)-(iv) in section 1.1 of \cite{Ste} :\\
(i) $Q_{c}(t,x)\cap Q_{c}(s,y)\neq \emptyset$ implies
$Q_{c}(s,y)\subset Q_{N_1c}(t,x)$ ;\\
(ii) $|Q_{N_1c}(t,x)|\leq N_2 |Q_{c}(t,x)|$ ;\\
(iii) $\cap_{c>0}\overline{Q}_{c}(t,x)=\{(t,x)\}$ and
$\cup_{c}Q_{c}(t,x)=\bR^{d+1}$ ;\\
(iv) for each open set $U$ and $c>0$, the function $(t,x)\to
|Q_{c}(t,x)\cap U|$ is continuous.
\end{proof}

\vspace{4mm}

{\bf{Proof of Theorem \ref{main theorem}}}.

First assume $f(t,x)=0$ if $t\not\in [0,T]$.

Since the theorem is already proved if $p=2$ in Lemma \ref{2-1}, we assume $p>2$.

First, we prove
\begin{equation}
                           \label{eqn 6.08.9}
((\hat \cG f )^{\#})^2 (t,x) \leq N (G(t,x)+G(-t,x)),
 \end{equation}
where
$$
G(t,x):=\bM_t\bM_x| f|^2_{H}(t,x)+\cM_x\bM_t\bM_x|f|^2_{H} (t,x)+ \cM_x \bM_t  \bM_x |f|_H^2 (t-T,x).
$$
Because of Jensen's inequality, to prove (\ref{eqn 6.08.9}) it suffices to
prove that for each $Q_c(r,z) \in \cF$ and $(t,x)\in Q_c(r,z)$,
\begin{eqnarray}
 && \dashint_{Q_c(r,z)} \dashint_{Q_c(r,z)} |\hat \cG f(s_1,y_1)-\hat \cG f (s_2,y_2)|^2 ~ds_1dy_1ds_2dy_2 \nonumber \\
 &\leq&  N     (G(t,x)+G(-t,x))  \label{4033}.
\end{eqnarray}
To prove this we use translation and apply Lemma \ref{2-3}, \ref{2-4}, and \ref{2-5}.

By the definition of $\hat \cG f$ and the fact  $\hat T_t g (y+z) = \hat T_t g(z+ \cdot)(y)$, we see for $s \geq 0$
\begin{eqnarray}
\hat\cG f(s,y+z)
 &=& [\int_{0}^{s} |\phi(\Delta)^{1/2}\hat T_{s-\rho}f(\rho,\cdot)(y+z)|_H^2 d \rho]^{1/2} \nonumber\\
 &=& [\int_{0}^{s} |\phi(\Delta)^{1/2}\hat T_{s-\rho}f(\rho,z+\cdot)(y)|_H^2 d \rho]^{1/2} \nonumber\\
&=& [\int_{0}^{s} |\phi(\Delta)^{1/2}\hat T_{s-\rho}f(\rho,z+\cdot)(y)|_H^2 d \rho]^{1/2} \nonumber\\
&=&\hat \cG f( \cdot, z + \cdot) (s,y), \label{12051}
\end{eqnarray}
and
$$
\hat \cG f(-s,y+z)=\hat \cG f(s,y+z)=\hat \cG f( \cdot, z + \cdot) (s,y)=\hat \cG f( \cdot, z + \cdot) (-s,y).
$$
Therefore we get
\begin{eqnarray*}
 \dashint_{Q_c(r,z)} |\hat \cG \{f( \cdot,\cdot)\} (s,y) |^2~dyds
&=& \frac{1}{|Q_c(r)|}\int_{Q_c(r)} |\hat \cG \{f ( \cdot,\cdot)\} (s,y+z) |^2~dyds\\
&=& \frac{1}{|Q_c(r)|}\int_{Q_c(r)} |\hat \cG \{f ( \cdot,z+\cdot)\} (s,y) |^2~dyds.
\end{eqnarray*}
This shows that we may assume  that $z=0$ and $Q_c(r,z)=Q_c(r)$.

If  $|r| \leq 2\phi(c^{-2})^{-1}$, then (\ref{4033}) follows from Lemma \ref{2-3}.  Also if $r>2\phi(c^{-2})^{-1}$ then (\ref{4033}) follows from Lemmas
\ref{2-4} and \ref{2-5}. Therefore it only remains to consider the case  $r<-2\phi(c^{-2})^{-1}$.  In this case, (\ref{4033}) follows from the identity
\begin{eqnarray*}
&&\dashint_{Q_c(r)} \dashint_{Q_c(r)} |\hat \cG f(s_1,y_1)-\hat \cG f (s_2,y_2)|^2 ~ds_1dy_1ds_2dy_2\\
&=&\dashint_{Q_c(-r)} \dashint_{Q_c(-r)} |\hat \cG f(s_1,y_1)-\hat \cG f (s_2,y_2)|^2 ~ds_1dy_1ds_2dy_2.
\end{eqnarray*}
This is because, for $r':=-r$, we have $r'>2\phi(c^{-2})^{-1}$ and this case is already proved above. Thus we have proved (\ref{eqn 6.08.9}).

Recall that $\cG f (t,x)=\hat{\cG}f(t,x)$ for $t\in [0,T]$.  Thus by Theorem \ref{fst} and (\ref{eqn 6.08.9})
\begin{eqnarray*}
&&\|\cG f\|_{L_p([0,T] \times \bR^{d})}^p
=\|I_{[0,T]} \hat \cG  f  \|_{ L_p( \bR^{d+1})}^p
\leq N\|(\hat \cG f)^{\#} \|_{L_p(\bR^{d+1})}^p\\
&\leq&  N \left(\|(\bM_t\bM_x|f|^2_{H})^{1/2}(t,x)\|_{L_p( \bR^{d+1})}^p+\|(\cM_x \bM_t\bM_x|f |^2_{H})^{1/2}(t,x)\|_{L_p( \bR^{d+1})}^p \right),
\end{eqnarray*}
where for the last inequality we use (\ref{eqn 6.08.9}) and the fact that the $L_p$-norm is invariant under reflection and translation.
Now we use
 Theorem \ref{hl}  to get
\begin{eqnarray*}
&& \int_{\bR^d} \int_{-\infty}^\infty (\bM_t \bM_x | f |_H^2)^{p/2}~dtdx  +\int_{-\infty}^\infty\int_{\bR^d}  (\cM_x \bM_t \bM_x |f |_H^2)^{p/2}~dtdx\\
&\leq&N \int_{-\infty}^\infty \int_{\bR^d} ( \bM_x | f|_H^2)^{p/2}~dtdx  + N\int_{\bR^d} \int_{-\infty}^\infty ( \bM_t \bM_x | f|_H^2)^{p/2}~dtdx\\
&\leq&N \int_{-\infty}^\infty \int_{\bR^d} (  | f|_H^2)^{p/2}~dtdx  +N \int_{-\infty}^\infty\int_{\bR^d}  (  \bM_x | f|_H^2)^{p/2}~dtdx\\
&\leq&  N \|f\|_{L_p( \bR^{d+1},H)}^p=N \|f\|_{L_p([0,T]\times \bR^{d},H)}^p.
\end{eqnarray*}

Finally, for the general case,  choose $\zeta_n \in C_0^{\infty}(0,T)$ such that $|\zeta_n| \leq 1$, $\zeta_n= 1$ on $[1/n,T-1/n]$. Then from the above inequality and Lebesgue dominated convergence theorem we conclude that
\begin{eqnarray*}
&&\|\cG f\|_{L_p([0,T] \times \bR^{d})}
\leq \limsup_{n \to \infty} \|\cG (f\zeta_n)\|_{L_p([0,T] \times \bR^{d})}\\
&&\leq \limsup_{n \to \infty} \|f \zeta_n\|_{L_p(\bR^{d+1},H)}
\leq \|f\|_{L_p([0,T] \times \bR^d,H)}.
\end{eqnarray*}
Hence the theorem is proved.
\qed

\mysection{An application}
                            \label{section application}

In this section we construct an $L_p$-theory for the stochastic  integro-differential equation
\begin{equation}
                    \label{eqn 0-1}
      du=(\phi(\Delta)u+f)dt+ g^k dw^k_t, \quad u(0)=0.
 \end{equation}
The condition $u(0)=0$ is not necessary and is assumed only for the simplicity of the presentation.

%
%
%
%

Let $(\Omega,\cI,P)$ be a complete probability space, and
$\{\cI_{t},t\geq0\}$ be an increasing filtration of
$\sigma$-fields $\cI_{t}\subset\cI$, each of which contains all
$(\cI,P)$-null sets. By  $\cP$ we denote the predictable
$\sigma$-field generated by $\{\cI_{t},t\geq0\}$ and  we assume that
on $\Omega$ we are given independent one-dimensional Wiener
processes $w^{1}_{t},w^{2}_{t},...$, each of which is a Wiener
process relative to $\{\cI_{t},t\geq0\}$.

For $\gamma\in \bR$,
denote $H^{\phi,\gamma}_{p}=(1-\phi(\Delta))^{-\gamma/2}L_p$, that is $u\in H^{\phi, \gamma}_p$ if
\begin{equation}
                   \label{norm}
\|u\|_{H^{\phi,\gamma}_p}=\|(1-\phi(\Delta))^{\gamma/2}u\|_p:=\|\cF^{-1}\{(1+\phi(|\xi|^2))^{\gamma/2} \cF(u)(\xi)\}\|_p<\infty,
\end{equation}
where $\cF$ is the Fourier transform and $\cF^{-1}$ is the inverse Fourier transform. Similarly, for a $\ell_2$-valued function $u=(u^1,u^2,\cdots)$ we define
$$
\|u\|_{H^{\phi,\gamma}_p(\ell_2)}=\||\cF^{-1}\{(1+\phi(|\xi|^2))^{\gamma/2} \cF(u)(\xi)\}|_{\ell_2}\|_p.
$$

The following result can be found, for instance, in \cite{FJS}.

\begin{lemma}
                         \label{lemma banach}
  (i) For any $\mu, \gamma \in \bR$, the map $(1-\phi(\Delta))^{\mu/2}: H^{\phi,\gamma}_p \to H^{\phi,\gamma-\mu}_p$ is  an isometry.

  (ii) For any $\gamma\in \bR$, $H^{\phi,\gamma}_p$ is a Banach space.

  (iii) If $\gamma_1\leq \gamma_2$, then $H^{\phi, \gamma_2}_p \subset H^{\phi, \gamma_1}_p$ and
  $$
  \|u\|_{H^{\phi,\gamma_1}_p}\leq c\|u\|_{H^{\phi,\gamma_2}_p}.
  $$

  (iv)   Let $\gamma \geq 0$. Then there is a constant $c>1$ so that
  $$
 c^{-1} \|u\|_{H^{\phi,\gamma}_p}    \leq (\|u\|_p+ \|\phi(\Delta)^{\gamma/2}u\|_p)\leq c \|u\|_{H^{\phi,\gamma}_p}.
 $$
\end{lemma}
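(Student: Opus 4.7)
My strategy is as follows. Parts (i) and (ii) are essentially formal consequences of the Fourier-side definition (\ref{norm}), while (iii) and (iv) both reduce, via (i), to $L_p$-boundedness of certain Fourier multipliers of the form $G(\phi(|\xi|^2))$, which I will settle via the Mikhlin multiplier theorem combined with Lemma \ref{13131}.

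For (i), since $\cF\bigl\{(1-\phi(\Delta))^{\mu/2}u\bigr\}(\xi)=(1+\phi(|\xi|^2))^{\mu/2}\cF(u)(\xi)$, substituting into (\ref{norm}) immediately yields $\|(1-\phi(\Delta))^{\mu/2}u\|_{H^{\phi,\gamma-\mu}_p} = \|u\|_{H^{\phi,\gamma}_p}$. For (ii), I would realize $H^{\phi,\gamma}_p$ as the space of tempered distributions $u$ for which the right-hand side of (\ref{norm}) is finite; then (i) with $\mu = \gamma$ gives an isometric bijection with $L_p$, so completeness transfers from $L_p$.

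For (iii) and (iv), applying (i) reduces (iii) to showing that the multiplier $m(\xi) = (1+\phi(|\xi|^2))^{(\gamma_1-\gamma_2)/2}$ (with non-positive exponent) is $L_p$-bounded. The two directions of (iv) reduce analogously: the $\le$ direction to $L_p$-boundedness of $M(\xi) = (1+\phi(|\xi|^2))^{\gamma/2}(1+\phi(|\xi|^2)^{\gamma/2})^{-1}$, and the $\ge$ direction to that of $M'(\xi) = \phi(|\xi|^2)^{\gamma/2}(1+\phi(|\xi|^2))^{-\gamma/2}$ combined with (iii) applied to $\gamma_1=0$, $\gamma_2=\gamma$. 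Each such multiplier has the form $G(\phi(|\xi|^2))$ with $G$ bounded on $[0,\infty)$ by elementary calculus using $\max(1,s^{\gamma/2})\le (1+s)^{\gamma/2} \le 2^{\gamma/2}\max(1,s^{\gamma/2})$ and $\max(1,s^{\gamma/2})\le 1+s^{\gamma/2}\le 2\max(1,s^{\gamma/2})$. Moreover, each $G$ satisfies the scaling estimate $|s^k G^{(k)}(s)|\le C_k$ uniformly on $[0,\infty)$ for every $k\ge 0$.

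To verify the Mikhlin hypothesis $|\xi|^{|\beta|}|D^\beta G(\phi(|\xi|^2))| \le C$ for $|\beta|\le \lfloor d/2\rfloor + 1$, I would first show by induction on $|\beta|$, using $\partial_{\xi_i}\phi(|\xi|^2) = 2\xi_i \phi'(|\xi|^2)$ and the bound $|\xi|^{2k}|\phi^{(k)}(|\xi|^2)| \le N \phi(|\xi|^2)$ from Lemma \ref{13131}, that
$$
|\xi|^{|\beta|}|D^\beta \phi(|\xi|^2)| \le C_\beta\, \phi(|\xi|^2), \qquad \xi\ne 0.
$$
Faà di Bruno's formula then expands $D^\beta G(\phi(|\xi|^2))$ into a sum over partitions $\pi$ of products $G^{(|\pi|)}(\phi(|\xi|^2))\prod_{B\in\pi}D^B\phi(|\xi|^2)$; combining the displayed bound on each factor with $\phi(|\xi|^2)^{|\pi|}|G^{(|\pi|)}(\phi(|\xi|^2))|\le C$ produces the required uniform estimate. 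The main obstacle is the calculus bound $|s^k G^{(k)}(s)|\le C_k$ for the outer functions in (iv), since $s^{\gamma/2}$ is not smooth at $s=0$ when $\gamma$ is non-integer and the ratios tend to constants at $s=\infty$; this is best handled by splitting $[0,\infty) = [0,1]\cup [1,\infty)$ and exploiting the identity $s^k\partial_s^k s^a = a(a-1)\cdots(a-k+1)\,s^a$ for any real $a$, so that the non-smooth power $s^{\gamma/2}$ cancels cleanly after multiplication by $s^k$. Once this is in place, the induction, the Faà di Bruno expansion, and the Mikhlin application are routine.
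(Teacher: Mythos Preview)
Your proposal is correct and follows essentially the same route as the paper. The paper dispatches (i) and (ii) exactly as you do, and for (iii)--(iv) it likewise reduces via (i) to checking that the multipliers $(1+\phi(|\xi|^2))^{\gamma_1}$ (with $\gamma_1\le 0$), $\dfrac{(1+\phi(|\xi|^2))^{\gamma_2}}{1+\phi(|\xi|^2)^{\gamma_2}}$, and $\dfrac{\phi(|\xi|^2)^{\gamma_2}}{(1+\phi(|\xi|^2))^{\gamma_2}}$ satisfy the Mikhlin condition $|D^n m(\xi)|\le N(n)|\xi|^{-n}$, citing a multiplier theorem (Theorem 0.2.6 in \cite{sogge}) and invoking Lemma~\ref{13131}; your Fa\`a di Bruno argument with the outer-function bound $|s^kG^{(k)}(s)|\le C_k$ simply makes that step explicit.
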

\begin{proof}
(i) follows from definition (\ref{norm}). For (ii), it suffices to prove the completeness. Let $\{u_n:n=1,2,\cdots\}$ be a Cauchy sequence in $H^{\phi,\gamma}_p$.
Then $f_n:=(1-\phi(\Delta))^{\gamma/2}u_n$ is a Cauchy sequence in $L_p$, and there exists $f\in L_p$ so that $f_n\to f$ in $L_p$. Define $u:=(1-\phi(\Delta))^{-\gamma/2}f$. Then $u\in H^{\phi,\gamma}_p$ and
$$
\|u_n-u\|_{H^{\phi,\gamma}_p}=\|f_n-f\|_p \to 0.
$$
Finally (iii) and (iv) are consequences of a Fourier multiplier theorem. Indeed, due to Theorem 0.2.6 in \cite{sogge}, we only need to show that for any $\gamma_1 \leq 0$ and $\gamma_2 \in \bR$
$$
|D^n[1+\phi(|\xi|^2)^{\gamma_1}]| + |D^n[\frac{(1+\phi(|\xi|^2))^{\gamma_2}}{1+\phi(|\xi|^2)^{\gamma_2}}]|+|D^n[\frac{(\phi(|\xi|^2))^{\gamma_2}}{(1+\phi(|\xi|^2))^{\gamma_2}}]| \leq N(n) |\xi|^{-n}.
$$

This comes from Lemma \ref{13131}. The lemma is proved.
\end{proof}

Denote
$$
\bH^{\phi,\gamma}_p(T)=L_p(\Omega\times [0,T], \cP, H^{\phi,\gamma}_p), \quad \bL_p(T):=\bH^{\phi,0}_p(T),
$$
$$
\bH^{\phi,\gamma}_p(T,\ell_2)=L_p(\Omega\times [0,T], \cP, H^{\phi,\gamma}_p(\ell_2)).
$$

\begin{defn}
 We write   $u\in \cH^{\phi,\gamma+2}_{p,\theta}(T)$  if
 $u\in \bH^{\phi,\gamma+2}_{p,\theta}(T)$, $u(0)=0$ and  for some $f \in\bH^{\phi,\gamma}_{p,\theta}(T)$ and
 $g=(g^1,g^2,\cdots)\in \bH^{\phi,\gamma+1}_{p}(T,\ell_2)$,  it holds that
 $$
 du=fdt+g^kdw^k_t
 $$
in the sense of distributions, that is for any $\varphi \in C^{\infty}_0(\bR^d)$, the equality
\begin{equation}
                              \label{distribution}
(u(t),\varphi)=\int^t_0 (f(s),\varphi)ds+\sum_k \int^t_0 (g^k(s),\varphi)dw^k_s
\end{equation}
holds for all $t\leq T$ (a.s.). In this case we write
$$
f=\bD u, \quad g=\bS u.
$$
The norm in  $
\cH^{\phi,\gamma+2}_{p}(T)$ is given by
$$
\|u\|_{\cH^{\phi,\gamma+2}_{p}(T)}=
\|u\|_{\bH^{\phi,\gamma+2}_{p}(T)} + \|\bD u\|_{\bH^{\phi,\gamma}_{p}(T)}  + \|\bS u\|_{\bH^{\phi,\gamma+1}_{p}(T,\ell_2)}.
$$
\end{defn}

\begin{remark}
As explained in \cite[Remark 3.2]{Kr99}, for any $g\in \bH^{\phi,\gamma+1}_p(T,\ell_2)$ and $\varphi\in C^{\infty}_0(\bR^d)$, the series of stochastic integral $\sum_k (g^k,\varphi)dw^k_t$ converges in probability uniformly in $[0,T]$ and  defines a continuous square integrable martingale on $[0,T]$.
\end{remark}

\begin{thm}
                             \label{thm banach}
For any $\gamma$ and $p\geq 2$,  $\cH^{\phi,\gamma+2}_p(T)$ is a Banach space. Also, for any $u\in \cH^{\phi,\gamma+2}_p(T)$,  we have $u\in C([0,T],H^{\phi,\gamma}_p)$ (a.s.) and
 \begin{equation}
                         \label{eqn 1.1}
 \E \sup_{t\leq T}\|u(t,\cdot)\|^p_{H^{\phi,\gamma}_p}\leq N\left(\|\bD u\|^p_{\bH^{\phi,\gamma}_p(T)}+\|\bS u\|^p_{\bH^{\phi,\gamma}_p(T,\ell_2)}\right).
\mathrm{}\end{equation}
In particular, for $t\leq T$,
$$
\|u\|^p_{\bH^{\phi,\gamma}_p(t)}\leq N \int^t_0 \|u\|^p_{\cH^{\phi,\gamma+2}_p(s)}\,ds.
$$
\end{thm}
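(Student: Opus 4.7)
The strategy is to first reduce all statements to the case $\gamma=0$ by means of the isometry $(1-\phi(\Delta))^{\gamma/2}: H^{\phi,\gamma}_p \to L_p$ established in Lemma \ref{lemma banach}(i). Since this operator is a Fourier multiplier, it commutes with both ordinary Lebesgue integration in time and with the stochastic integrals appearing in (\ref{distribution}), so applying it to the equation preserves its form and transports the $\cH^{\phi,\gamma+2}_p(T)$-norm to the analogous quantity with $\gamma=0$. Thus it suffices to work with $u\in\bL_p(T)$, $f=\bD u\in\bL_p(T)$, and $g=\bS u\in\bL_p(T,\ell_2)$.

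The heart of the theorem is the maximal inequality (\ref{eqn 1.1}) for $\gamma=0$, which I would prove by Krylov's analytic $L_p$-method. For a sufficiently smooth compactly supported $u$ one applies It\^o's formula to the functional $F(u)=\|u\|_{L_p}^p$, obtaining
\begin{equation*}
\|u(t)\|_{L_p}^p = p\!\int_0^t\!\!\int_{\bR^d}|u|^{p-2}u f\,dx\,ds + \tfrac{p(p-1)}{2}\!\int_0^t\!\!\int_{\bR^d}|u|^{p-2}|g|_{\ell_2}^2\,dx\,ds + M_t,
\end{equation*}
where $M_t=p\sum_k\int_0^t\!\int |u|^{p-2}u\,g^k\,dx\,dw_s^k$ is a local martingale. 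Applying H\"older's inequality in the spatial variable (with dual exponents $p/(p-1)$ and $p$) to the two drift terms, and the Burkholder--Davis--Gundy inequality to $M_t$, followed by a standard absorption/Young-type argument to handle the resulting $\E\sup_{s\leq t}\|u(s)\|_{L_p}^p$ on the right, yields (\ref{eqn 1.1}). The general case of $u\in\cH^{\phi,2}_p(T)$ follows by spatial mollification: applying the inequality to the mollifications and passing to the limit using the continuity of mollifiers on $\bL_p$ and $\bL_p(\ell_2)$, and the continuity of the operator $\phi(\Delta)$ on the appropriate scale.

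Continuity of $t\mapsto u(t,\cdot)$ in $H^{\phi,\gamma}_p$ is then obtained from the same approximation: mollified solutions have continuous trajectories (since the stochastic integrals admit continuous versions and the drift contribution is time-continuous in $L_p$), while (\ref{eqn 1.1}) applied to the difference of two mollifications shows the family is Cauchy in $L_p(\Omega;C([0,T];H^{\phi,\gamma}_p))$, producing a continuous representative for $u$. Completeness of $\cH^{\phi,\gamma+2}_p(T)$ is now a soft consequence: if $\{u_n\}$ is Cauchy in that space, the triple $(u_n,\bD u_n,\bS u_n)$ converges to some $(u,f,g)$ in the corresponding component spaces by Lemma \ref{lemma banach}(ii); passing to the limit in (\ref{distribution}) for each fixed test function (using $L_p$-continuity of the deterministic integral and $L_2$-isometry of the stochastic integral) shows $f=\bD u$ and $g=\bS u$, and (\ref{eqn 1.1}) applied to $u_n-u_m$ guarantees the limiting $u$ is continuous. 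The last displayed inequality is immediate from (\ref{eqn 1.1}) applied on $[0,t]$ together with Fubini.

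The main obstacle will be the rigorous justification of It\^o's formula for $F(u)=\|u\|_{L_p}^p$ at the regularity available a priori, and the careful bookkeeping required when the noise is $\ell_2$-valued; this is where one must invoke the predictability hypotheses and the density of smooth predictable processes in $\bL_p(T,\ell_2)$, essentially reproducing the $L_p$-framework developed by Krylov for second-order SPDEs, adapted here to the scale $H^{\phi,\gamma}_p$.
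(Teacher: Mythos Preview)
Your proposal is correct and follows the same route as the paper: reduce to $\gamma=0$ via the isometry $(1-\phi(\Delta))^{\gamma/2}$, establish the maximal inequality (\ref{eqn 1.1}) in the $L_p$ setting, and then deduce continuity of trajectories and completeness from it. The only difference is one of presentation: the paper does not carry out the $\gamma=0$ case in detail but simply cites \cite[Theorem~3.4]{KK}, whereas you sketch the underlying Krylov-type argument (It\^o's formula for $\|u\|_{L_p}^p$, H\"older, BDG, absorption) that such a reference contains. Your sketch of that argument is accurate, and your handling of completeness and of the final displayed inequality matches what the paper indicates.
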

\begin{proof}
Since  the operator $(1-\phi(\Delta))^{\gamma/2}: \cH^{\phi,\gamma+2}_p(T)\to \cH^{\phi,2}_p(T)$ is an isometry, it suffices to prove the case $\gamma=0$. In this case $H^{\phi,\gamma}_p=L_p$, and therefore
(\ref{eqn 1.1}) is proved, for instance, in Theorem 3.4 of \cite{KK}. Also, the completeness of the space  $\cH^{\phi,\gamma+2}_{p}(T)$ can be proved using (\ref{eqn 1.1}) as in the proof of Theorem 3.4 of \cite{KK}.
\end{proof}

 The following maximal principle  will be used  to prove the uniqueness result of equation (\ref{eqn 0-1}).

\begin{lemma}
                          \label{maxip}
Let $\lambda >0$ be a constant. Suppose that $u$ is continuous in $[0,T]\times \bR^d$, $u(t,\cdot)\in C^2_b(\bR^d)$ for each $t>0$, $u_t, \phi(\Delta)u$ are continuous in $(0,T]\times \bR^d)$, $u_t - \phi(\Delta) u +\lambda u=0$ for $t\in (0,T]$, $u(t_n,x) \to u(t,x)$ as $t_n \to t$ uniformly for $x \in \bR^d$, $u(0,x)=0$ for all $ x \in \bR^d$, and for each $t$ $u(t,x) \to 0$ as $|x| \to \infty$. Then $u \equiv 0$ in $[0,T] \times \bR^d$.
\end{lemma}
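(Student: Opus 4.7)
The plan is to run the standard maximum principle argument adapted to the nonlocal operator $\phi(\Delta)$. Assume for contradiction that $u\not\equiv 0$. By considering $-u$ (which solves the same equation since it is linear) we may assume $M^*:=\sup_{[0,T]\times\bR^d} u(t,x)>0$. The first task is to show $M^*$ is attained at some $(t_0,x_0)$. The hypothesis that $u(t_n,\cdot)\to u(t,\cdot)$ uniformly in $x$ says precisely that $t\mapsto u(t,\cdot)$ is continuous as a map $[0,T]\to C_b(\bR^d)$ (sup norm). Combined with the pointwise condition $u(t,x)\to 0$ as $|x|\to\infty$ for each $t$, compactness of $[0,T]$ upgrades the decay at infinity to be uniform in $t$. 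Hence $u$ attains its supremum on $[0,T]\times\bR^d$, and since $u(0,\cdot)=0$ while $M^*>0$, the maximizing point $(t_0,x_0)$ satisfies $t_0>0$.

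The next step is to extract the two sign conditions at $(t_0,x_0)$. For the time derivative, the inequality $u(t,x_0)\le u(t_0,x_0)$ for every $t\le t_0$ together with continuity of $u_t$ on $(0,T]\times\bR^d$ forces $u_t(t_0,x_0)\ge 0$ (this is a one-sided inequality, which is all we need whether $t_0<T$ or $t_0=T$). For the nonlocal term I would use the representation in (\ref{eqn 18.2013}): since $x_0$ is a global spatial maximum, $u(t_0,x_0+y)-u(t_0,x_0)\le 0$ for every $y\in\bR^d$, so
\begin{equation*}
\phi(\Delta)u(t_0,x_0)=\lim_{\varepsilon\downarrow 0}\int_{|y|>\varepsilon}\bigl(u(t_0,x_0+y)-u(t_0,x_0)\bigr)\,j(|y|)\,dy\le 0.
\end{equation*}
Plugging these two inequalities into the PDE gives
\begin{equation*}
0=u_t(t_0,x_0)-\phi(\Delta)u(t_0,x_0)+\lambda u(t_0,x_0)\ge \lambda M^*>0,
\end{equation*}
the desired contradiction. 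The same argument applied to $-u$ rules out $\inf u<0$, so $u\equiv 0$.

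The only part that requires more than bookkeeping is the attainment of $M^*$, i.e.\ promoting the pointwise decay $u(t,x)\to 0$ as $|x|\to\infty$ to decay that is uniform in $t$. I expect this to be the main (modest) obstacle: one must extract from the hypothesis ``$u(t_n,x)\to u(t,x)$ uniformly in $x$ whenever $t_n\to t$'' the fact that $\{u(t,\cdot):t\in[0,T]\}$ is a compact subset of $(C_b(\bR^d),\|\cdot\|_\infty)$ whose members individually vanish at infinity, and conclude that for every $\eta>0$ there exists $R>0$ such that $|u(t,x)|<\eta$ whenever $|x|\ge R$, uniformly in $t\in[0,T]$. Once this is in hand, the rest of the proof is the routine parabolic maximum principle, with the single non-standard observation being that $\phi(\Delta)$ (which is given by an integral with nonnegative kernel $j$ against the differences $u(t_0,x_0+y)-u(t_0,x_0)$) is sign-nonpositive at a global spatial maximum, exactly as for the fractional Laplacian.
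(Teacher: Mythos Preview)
Your proposal is correct and follows essentially the same maximum-principle argument as the paper: show the supremum is attained at some $(t_0,x_0)$ with $t_0>0$, then derive a sign contradiction from $u_t\ge 0$, $\phi(\Delta)u\le 0$, and $\lambda u>0$ at that point. The paper handles attainment by a direct sequence argument (extract $t_{n_k}\to t_0$, rule out $|x_{n_k}|\to\infty$ via the uniform-in-$x$ convergence hypothesis), whereas you phrase it as compactness of $\{u(t,\cdot):t\in[0,T]\}$ in $C_0(\bR^d)$; these are equivalent, and your version is arguably cleaner since you avoid needing the strict inequality $\phi(\Delta)u(t_0,x_0)<0$ that the paper invokes.
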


\begin{proof}
Suppose $\sup_{(t,x) \in [0,T]\times \bR^d} u(t,x) >0$. Then we claim that there exists a $(t_0,x_0) \in [0,T] \times \bR^d$ such that
$ u (t_0,x_0) = \sup_{(t,x) \in [0,T]\times \bR^d} u(t,x)$. The explanation is as follows. Since there exists a sequence $(t_n,x_n)$ such that
 $u(t_n,x_n) \to \sup_{(t,x) \in [0,T]\times \bR^d} u(t,x)$, one can choose a subsequence $t_{n_k}$ such that $t_{n_k} \to t_0$ for some $t_0 \in [0,T]$ as $k \to \infty$. If $\{x_n\}$ is unbounded, then there exists a subsequence $x_{n_k}$ such that $|x_{n_k}| \to \infty$. Due to the assumption :
  $u(t_n,x) \to u(t,x)$ as $t_n \to t$ uniformly for $x \in \bR^d$,
 we have $u(t_{n_k},x_{n_k}) \to u(t_0, x_{n_k})$ as $k \to \infty$. But since $u(t_0, x_{n_k}) \to 0$ as $k \to \infty$ this is contradiction to the fact
 $u(t_{n_k}, x_{n_k}) \to \sup_{(t,x) \in [0,T]\times \bR^d} u(t,x) >0$. Therefore, $\{x_n\}$ is bounded and this means that $x_n$ also has a subsequence $x_{n_k}$ such that $x_{n_k} \to x_0$ for some $x_0 \in \bR^d$. So we know that
 our claim is true. Note that since $u(0,x)=0 ~ \forall x \in \bR^d$, $ t_0>0$ and if $t_0 \in (0,T)$ then $ u_t(t_0,x_0)=0$ otherwise $u_t(T,x_0) \geq 0$. Recall that
\begin{align*}
&\phi(\Delta) u(t,x) = \lim_{\eps \downarrow 0}\int_{\{y\in \bR^d: \, |y|>\eps\}} (u(t,x+y)-u(t,x)) j(|y|)\, dy
\end{align*}
and $j$ is strictly positive. Therefore, we get $(u_t -\phi(\Delta) u +\lambda u)(t_0,x_0) > 0$ and this is contradiction to our assumption. So we have $\sup_{(t,x) \in [0,T]\times \bR^d} u(t,x) \leq 0$. Similarly, we can easily show that $\inf_{(t,x) \in [0,T]\times \bR^d} u(t,x) \geq 0$. The lemma is proved.
\end{proof}

The following result will be used to estimate the deterministic part of (\ref{eqn 0-1}).

\begin{lemma}
\label{multi}
Let $m(\tau, \xi):=\frac{ \phi(|\xi|^2)}{i\tau + \phi(|\xi|^2)}$. Then, $m$ is a $L_p(\bR^{d+1})$-multiplier. In other words,
$$
\|\cF^{-1}( m  \cF f)\|_{L_p(\bR^{d+1})} \leq N \|f\|_{L_p(\bR^{d+1})}, \quad \forall f\in L_p(\bR^{d+1}),
$$
where $N$ depends only on $d$ and $p$.
\end{lemma}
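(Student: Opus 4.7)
The plan is to apply the Marcinkiewicz multiplier theorem on $\bR^{d+1} = \bR_\tau \times \bR^d_\xi$, which reduces the claim to checking that for every $(a,\beta) \in \{0,1\}^{d+1}$,
$$
\sup_{(\tau,\xi)\neq 0} |\tau|^a \prod_{i=1}^d |\xi_i|^{\beta_i}\, \bigl|\partial_\tau^a \partial_\xi^\beta m(\tau,\xi)\bigr| < \infty.
$$
Setting $A := \phi(|\xi|^2) \geq 0$ and $B := \tau$, so that $m = A/(A+iB)$ and $|A+iB|^2 = A^2 + B^2$, the working inequality throughout will be the AM-GM bound $|\tau|^a A^j \leq (A^2+\tau^2)^{(a+j)/2}$.

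The first task is a structural induction on $|\alpha| = a + |\beta|$. Expanding $\partial_\tau^a \partial_\xi^\beta m$ by the quotient/product rule \emph{without combining cancellations}, I plan to show that the result is a finite linear combination of terms
$$
C \cdot \frac{\prod_{l=1}^{j} \partial_\xi^{\gamma_l} A}{(A+iB)^{j+a}},
$$
where $j \geq 1$ and the multi-indices $\gamma_l \in \{0,1\}^d$ have pairwise disjoint supports whose union equals $\{i : \beta_i = 1\}$ (one $\gamma_l$ possibly being the zero multi-index, corresponding to the original $A$ in the numerator of $m$). The key observation driving the induction is that each $\partial_{\xi_i}$ either extends an existing $\gamma_l$ by the coordinate $i$ (leaving $j$ and the denominator exponent unchanged) or spawns a new factor $\partial_{\xi_i} A$ from differentiating $(A+iB)^{-k}$ (so $j \mapsto j+1$ and the denominator exponent also grows by one); while $\partial_\tau$ annihilates the numerator product and only raises the denominator exponent by one. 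Thus the invariant $k = j + a$ is preserved.

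Next I would quantify each such term. Because $\beta_i \in \{0,1\}$, the chain rule yields $\partial_\xi^{\gamma_l} \phi(|\xi|^2) = 2^{|\gamma_l|} \phi^{(|\gamma_l|)}(|\xi|^2) \prod_{i \in \mathrm{supp}(\gamma_l)} \xi_i$, and Lemma \ref{13131} (applied with $\lambda = |\xi|^2$ and $n = |\gamma_l|$) delivers
$$
\prod_{i \in \mathrm{supp}(\gamma_l)} |\xi_i|\cdot |\partial_\xi^{\gamma_l} A| \leq 2^{|\gamma_l|} |\xi|^{2|\gamma_l|} \bigl|\phi^{(|\gamma_l|)}(|\xi|^2)\bigr| \leq N A.
$$
Since the supports of the $\gamma_l$'s partition $\mathrm{supp}(\beta)$, multiplying each term by $|\tau|^a \prod_i |\xi_i|^{\beta_i}$ distributes the $|\xi_i|$-weights across the product factor-by-factor, producing the uniform estimate
$$
|\tau|^a \prod_i |\xi_i|^{\beta_i} \cdot \Bigl|\frac{C \prod_l \partial_\xi^{\gamma_l} A}{(A+iB)^{j+a}}\Bigr| \leq N \frac{|\tau|^a A^j}{(A^2+\tau^2)^{(j+a)/2}} \leq N,
$$
which gives the required Marcinkiewicz bound.

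The main obstacle will be the combinatorial bookkeeping in the structural induction: crisply tracking how $\partial_\tau$ and each $\partial_{\xi_i}$ act on each term and verifying that the invariant $k = j + a$ is indeed preserved, especially in handling the ``original $A$''-factor versus newly spawned $\partial_\xi A$-factors. Once this invariant is in place, Lemma \ref{13131} (which encapsulates all the needed derivative estimates on $\phi$) together with the elementary AM-GM bound on $|\tau|^a A^j / (A^2+\tau^2)^{(j+a)/2}$ closes the argument term-by-term.
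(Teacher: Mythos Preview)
Your proposal is correct and follows essentially the same strategy as the paper: both verify the Marcinkiewicz multiplier condition on $\bR^{d+1}$ by using Lemma~\ref{13131} to control all $\xi$-derivatives of $\phi(|\xi|^2)$, then combining these with the elementary bound $|\tau|^a A^j \le (A^2+\tau^2)^{(j+a)/2}$. The only difference is organizational --- the paper first proves the intermediate estimates $|D^\beta_\xi \phi(|\xi|^2)|\le N\phi(|\xi|^2)|\xi|^{-|\beta|}$ and $|D^\gamma_\xi(i\tau+\phi(|\xi|^2))^{-1}|\le N(\tau^2+\phi^2)^{-1/2}|\xi|^{-|\gamma|}$ and then combines them (appealing to the integral form of Marcinkiewicz, Theorem~4.6$'$ in \cite{Ste2}), whereas you track the explicit product structure of each term via a structural induction and apply the pointwise form; these are equivalent routes to the same conclusion.
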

\begin{proof}
First we estimate derivatives of $m$.
Let $\alpha=(\alpha_1,\cdots, \alpha_{d})\neq 0$ be a $d$-dimensional multi-index with  $\alpha_i = 0$ or $1$ for $i =1, \ldots,d$. Assume $\beta,\gamma$ are multi-indices so that $\beta+\gamma=\alpha$.
%
Then from Lemma \ref{13131} we can easily get
\begin{eqnarray}
\label{2012}|D^{\beta} (\phi(|\xi|^2)) | \leq N \phi(|\xi|^2) |\xi|^{-|\beta|}.
\end{eqnarray}
Suppose $\gamma \neq 0$. Without loss of generality  assume $\gamma_1 = 1$.
Then by Leibniz's rule and \eqref{2012}, we get
\begin{eqnarray}
\notag &&|D^\gamma (i\tau +\phi(|\xi|^2))^{-1} |\\
\notag &=&|D^{\gamma'} D^{\gamma_1} (i\tau +\phi(|\xi|^2))^{-1} |\\
\notag &=&|D^{\gamma'}(i\tau +\phi(|\xi|^2))^{-2} D_{\xi^1}(\phi(|\xi|^2)) |\\
\notag &\leq&N \sum_{ \bar \gamma'+ \hat \gamma'=\gamma'} |(i\tau +\phi(|\xi|^2))^{-(|\bar \gamma'|+2)} \\
\notag &&\quad \quad \quad \quad \quad
\times [D_{\xi^1}(\phi(|\xi|^2))]^{ \bar \gamma_1'}\cdots [D_{\xi^d}(\phi(|\xi|^2))]^{ \bar \gamma_d'}D^{ \hat \gamma'} D_{\xi^1}(\phi(|\xi|^2)) |\\
\notag &\leq&N\sum_{\bar \gamma' + \hat \gamma' =\gamma'} |\tau^2+ \phi(|\xi|^2)^2|^{-(|\bar \gamma' |+2)/2} \phi(|\xi|^2)^{|\bar \gamma'|}|\xi|^{-|\bar \gamma'|} \phi(|\xi|^2) |\xi|^{-(|\hat \gamma'|+1)}\\
\label{201}&\leq&N\   |\tau^2+ \phi(|\xi|^2)^2|^{-1/2}   |\xi|^{-|\gamma|}.
\end{eqnarray}
Obviously even if $\gamma=0$, \eqref{201} holds. Therefore from \eqref{2012} and \eqref{201}, we get
\begin{eqnarray}
\label{202}|D^\alpha m(\tau,\xi) | \leq N \frac{ \phi(|\xi|^2)}{ |\tau| + \phi(|\xi|^2)} |\xi|^{-|\alpha|}.
\end{eqnarray}

Next let $\hat{\alpha} = (\alpha_0, \alpha_1, \ldots, \alpha_{d})=(\alpha_0, \alpha)$ be a $(d+1)$-dimensional multi-index with $\alpha_0 \neq 0$
 and $\alpha_i=0$ or $1$ for $i=1, \ldots,d$.
 Then from \eqref{202} we get
\begin{eqnarray}
\notag |D^{\hat{\alpha}} m(\tau,\xi)|
\notag &\leq& N(|\tau| +  \phi(|\xi|^2))^{-\alpha_0} |D^{\alpha} m(\tau,\xi)|\\
\label{203} &\leq& N \frac{ \phi(|\xi|^2)}{ (|\tau| + \phi(|\xi|^2))^{\alpha_0 +1}} |\xi|^{-|\alpha|}.
\end{eqnarray}
Now to conclude that $m$ is a multiplier, we use Theorem 4.6' in p 109 of \cite{Ste2}. Due to \eqref{203}, we see that for each $0 < k \leq d+1$
\begin{eqnarray*}
|\frac{\partial^k m}{\partial \tau  \partial \xi_1  \cdots \partial \xi_{k-1}}|
\leq N \frac{ \phi(|\xi|^2)}{ (|\tau| + \phi(|\xi|^2))^{2}} |\xi|^{-|k-1|}.
\end{eqnarray*}
Therefore, for any dyadic rectangles $A= \Pi_{1 \leq i \leq k} [2^{k_i}, 2^{k_i+1}]$, we have
\begin{eqnarray*}
\int_{\Pi} |\frac{\partial^k m}{\partial x_1 \partial x_2  \cdots \partial x_k}|  d\tau d\xi_1 \cdots \xi_{k-1}
\leq N.
\end{eqnarray*}
We can easily check that from \eqref{202} and \eqref{203}, the above statement is also valid for every one of the $n!$ permutations of the variables $\tau,\xi_1,\ldots,\xi_d$. The lemma is proved.
\end{proof}
Here is our $L_p$-theory.

\begin{thm}
            \label{Lptheory}
 For any $f \in\bH^{\phi,\gamma}_{p}(T)$ and
 $g=(g^1,g^2,\cdots)\in \bH^{\phi,\gamma+1}_{p}(T,\ell_2)$,  equation (\ref{eqn 0-1})
      has a unique solution $u\in \cH^{\phi,\gamma+2}_p(T)$, and for this solution
\begin{equation}
                            \label{eqn 1.3}
\|u\|_{\cH^{\phi,\gamma+2}_p(T)}\leq N\|f\|_{\bH^{\phi,\gamma}_p(T)}+N\|g\|_{\bH^{\phi,\gamma+1}_p(T,\ell_2)}.
\end{equation}
 \end{thm}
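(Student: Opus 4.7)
The plan is to reduce to $\gamma=0$ and then handle the deterministic and stochastic parts separately. The isometry $(1-\phi(\Delta))^{\gamma/2}:H^{\phi,\gamma+2}_p\to H^{\phi,2}_p$ of Lemma \ref{lemma banach}(i) commutes with $\phi(\Delta)$, with the stochastic integral, and with the operators $\bD,\bS$, so it suffices to treat $f\in\bL_p(T)$ and $g\in\bH^{\phi,1}_p(T,\ell_2)$. Decompose the candidate solution as $u=v+w$ with the deterministic and stochastic mild parts
\[
v(t,x)=\int_0^t T_{t-s}f(s,\cdot)(x)\,ds,\qquad w(t,x)=\sum_k\int_0^t T_{t-s}g^k(s,\cdot)(x)\,dw^k_s.
\]
A direct check shows $u$ solves \eqref{eqn 0-1} in the sense of \eqref{distribution} with $\bD u=\phi(\Delta)u+f$ and $\bS u=g$; the task is to bound $\|\phi(\Delta)u\|_{\bL_p(T)}$ and $\|u\|_{\bL_p(T)}$ by the right side of \eqref{eqn 1.3}, since the remaining norms then follow from Lemma \ref{lemma banach}(iv) and trivially for $\bS u$.

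For the deterministic piece $v$, I would extend $f$ by zero outside $[0,T]$ and, to place everything in $L_p(\bR^{d+1})$ despite $v$ not decaying for $t>T$, pass to $v_\lambda(t):=e^{-\lambda t}v(t)$, which solves $(\partial_t-\phi(\Delta)+\lambda)v_\lambda=e^{-\lambda t}f$ on $\bR\times\bR^d$. Taking the full Fourier transform in $(t,x)$ gives the operator identity
\[
\cF(\phi(\Delta)v_\lambda)(\tau,\xi)=-\frac{\phi(|\xi|^2)}{i\tau+\lambda+\phi(|\xi|^2)}\cF(e^{-\lambda\cdot}f)(\tau,\xi),
\]
and Lemma \ref{multi} (whose Marcinkiewicz bounds are uniform in $\lambda\ge0$, as the key estimates $|D^\beta\phi(|\xi|^2)|\le N\phi(|\xi|^2)|\xi|^{-|\beta|}$ do not see $\lambda$) yields $\|\phi(\Delta)v\|_{\bL_p(T)}\le N\|f\|_{\bL_p(T)}$. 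The companion bound $\|v\|_{\bL_p(T)}\le NT\|f\|_{\bL_p(T)}$ is immediate from the $L_p$-contractivity of $T_t$ (since $p(t,\cdot)$ is a probability density) and Minkowski's inequality.

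For the stochastic piece $w$, set $G^k:=\phi(\Delta)^{1/2}g^k$. Because $\phi(\Delta)^{1/2}$ commutes with $T_t$,
\[
\phi(\Delta)w(t,x)=\sum_k\int_0^t\phi(\Delta)^{1/2}T_{t-s}G^k(s,\cdot)(x)\,dw^k_s.
\]
The Burkholder-Davis-Gundy inequality applied pointwise in $(t,x)$ together with Theorem \ref{main theorem} applied to $G$ with Hilbert space $H=\ell_2$ gives
\[
\E\int_0^T\!\!\int_{\bR^d}|\phi(\Delta)w|^p\,dxdt\le N\,\E\int_0^T\!\!\int_{\bR^d}|G|^p_{\ell_2}\,dxdt\le N\|g\|^p_{\bH^{\phi,1}_p(T,\ell_2)},
\]
the last step by Lemma \ref{lemma banach}(iv). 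Feeding this and the stochastic BDG/Littlewood-Paley bound on $\|\phi(\Delta)^{1/2}w\|_{\bL_p(T,\ell_2)}$ (equivalently $w\in\bH^{\phi,1}_p$) into Theorem \ref{thm banach} controls $\|w\|_{\bL_p(T)}$. Summing the estimates for $v$ and $w$ produces \eqref{eqn 1.3}.

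Uniqueness reduces to showing that a solution with $f=0$, $g=0$ vanishes. I would mollify $u$ in $x$ by a standard $\zeta_\eps\in C_0^\infty(\bR^d)$ to obtain $u^\eps$ which is spatially smooth and solves the same noiseless equation $du^\eps=\phi(\Delta)u^\eps\,dt$ with $u^\eps(0)=0$; hence $u^\eps$ is deterministic. After an additional spatial cutoff (or by combining with Sobolev embedding to get the needed decay and continuity hypotheses of Lemma \ref{maxip}), the maximum principle forces $u^\eps\equiv0$, and then $\eps\downarrow0$ gives $u\equiv0$. The main obstacle is the Fourier-multiplier argument for $v$: because $v$ does not live in $L_p(\bR^{d+1})$ on its own, the weight/extension trick is essential, and one must verify that Lemma \ref{multi}'s constants are stable as $\lambda\to 0$ so the bound descends to the original $v$ on $[0,T]$.
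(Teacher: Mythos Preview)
Your approach is essentially the same as the paper's: reduce to $\gamma=0$ by the isometry, split into the deterministic mild solution and the stochastic mild solution, treat the former via the Fourier multiplier Lemma~\ref{multi} and the latter via BDG together with Theorem~\ref{main theorem}, and prove uniqueness through mollification plus the maximum principle Lemma~\ref{maxip}.

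Two points where the paper is more careful than your sketch. First, for the stochastic piece the paper does not work with general $g$ directly: it begins with step-function data of the form \eqref{eqn 03.17.11}, builds $u$ via the explicit formula \eqref{eqn 03-22-1}, and uses stochastic Fubini to identify $u=\sum_k\int_0^t T_{t-s}g^k\,dw^k_s$; only then are the a~priori estimates derived and the approximation to general $g$ carried out. This matters because your appeal to Theorem~\ref{thm banach} to control $\|w\|_{\bL_p(T)}$ presupposes $w\in\cH^{\phi,2}_p(T)$, which is exactly what you are proving---the step-function reduction is what breaks that circularity. Second, for uniqueness Lemma~\ref{maxip} is stated for $u_t-\phi(\Delta)u+\lambda u=0$ with $\lambda>0$, so the paper first sets $w=e^{-\lambda t}u$ and then mollifies; your mollification of $u$ alone yields $u^\eps_t-\phi(\Delta)u^\eps=0$, to which Lemma~\ref{maxip} does not apply as written. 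Your $e^{-\lambda t}$ damping in the deterministic multiplier step is a reasonable variant of the paper's direct extension $\bar u=\bar p*\bar f$; both routes land on the same symbol $\phi(|\xi|^2)/(i\tau+\phi(|\xi|^2))$.
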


\begin{proof}
Due to the isometry, we may assume $\gamma=0$.  Note that if $u$ is a solution of \eqref{eqn 0-1} in $\cH^{\phi, 2}_p(T)$, then we have $u\in C([0,T], L_p)$ (a.s.) by Theorem \ref{thm banach}.

{\bf{Step 1}}. First we prove the uniqueness of Equation \eqref{eqn 0-1}. Let $u_1$, $u_2$ be solutions of equation \eqref{eqn 0-1}. Then putting $v:= u_1- u_2$, we see that
$v$ satisfies \eqref{eqn 0-1} with $f=g^k=0$.

Take a non-negative smooth function $\varphi\in C^{\infty}_0$ with unit integral. For $\varepsilon>0$, define $\varphi_{\varepsilon}(x)=\varepsilon^{-d}\varphi(x/\varepsilon)$. Also denote $w(t,x):= e^{-\lambda t}u(t,x)$ and $w^{\varepsilon}=w * \varphi_{\varepsilon}$.
Then by plugging $\varphi_{\varepsilon}(\cdot-x)$ in (\ref{distribution}) in place of $\varphi$, we have $w^{\varepsilon}_t-\phi(\Delta)w^{\varepsilon}+\lambda w^{\varepsilon}=0$. Also one can easily check that $w^{\varepsilon}$ satisfies the conditions in Lemma \ref{maxip} and concludes that $w^{\varepsilon}\equiv 0$. This certainly proves $v\equiv 0$ (a.s.).

{\bf{Step 2}}. We consider the case $g=0$. By  approximation argument (see the next step for the detail), to prove the existence and (\ref{eqn 1.3}), we may assume that
$f$ is sufficiently smooth in $x$ and vanishes if $|x|$ is sufficiently large. In this case, one can easily check that for each  $\omega$,
\begin{equation}
                               \label{eqn 1.3.5}
u(t,x)=\int^t_0 T_{t-s} f(s,x)ds
\end{equation}
satisfies $u_t=\phi(\Delta)u+f$. In addition to it, denoting $\bar p (t,x) = I_{0 \leq t } p(t,x)$ and $\bar f = I_{0 \leq t \leq T} f(t,x)$, we see that for $(t,x) \in [0,T] \times \bR^d$
 \begin{eqnarray}
\label{01312}
u(t,x)= \bar p( \cdot , \cdot ) \ast \bar f(\cdot, \cdot) (t,x).
\end{eqnarray}

 We use notation $\cF_d$ and $\cF_{d+1}$ to denote the Fourier transform for $x$ and $(t,x)$, respectively.  Moreover for convenience we put $\cF_d u(t,x) = \cF_d (u(t, \cdot))(x)$. Under this setting, observe that
\begin{eqnarray*}
\cF_{d+1} (\bar p) (\tau, \xi)
 = \int_{\bR} e^{-i\tau t} \cF_d( \bar p) (t,\xi) dt
=\int_{0}^\infty e^{-i\tau t} e^{-t \phi(|\xi|^2)} dt
=\frac{1}{i\tau + \phi(|\xi|^2)}.
\end{eqnarray*}
So denoting $\bar u = \bar p( \cdot , \cdot ) \ast \bar f(\cdot, \cdot) (t,x)$ we see
\begin{eqnarray}
\notag \cF_{d+1}^{-1} [(1+ \phi(|\xi|^2)) \cF_{d+1} \bar u](t,x)
\notag &=& \bar u +\cF_{d+1}^{-1} [ \phi(|\xi|^2) \cF_{d+1} (\bar p )  \cF_{d+1} (\bar f )]\\
\label{01315}&=& \bar u+ \cF_{d+1}^{-1} [\frac{ \phi(|\xi|^2)}{i\tau + \phi(|\xi|^2)}   \cF_{d+1} (\bar f )].
\end{eqnarray}
Due to generalized Minkowski's inequality, we can easily check that $\| \bar u\|_{L_p(\bR^{d+1})}\leq N\| f\|_{L_p([0,T]\times \bR^d)}$. Moreover we know that
$\frac{\phi(|\xi|^2)}{i\tau + \phi(|\xi|^2)}$ is a $L_p(\bR^{d+1})$-multiplier from Lemma \ref{multi}. Therefore, from \eqref{01312} we conclude that
\begin{eqnarray*}
\|u\|_{\bH^{\phi,2}_p(T)} \leq \| f \|_{\bL_p(T)}.
\end{eqnarray*}

{\bf{Step 3}}. We consider the case $f=0$.
First, assume that $g^k=0$ for all sufficiently large $k$ (say for all $k\geq N_0$), and each $g^k$ is of the type
\begin{equation}
                       \label{eqn 03.17.11}
g^k(t,x)=\sum_{i=0}^{m_k}{\bf 1}_{(\tau^k_i,\tau^k_{i+1}]}(t)g^{k_i}(x) \qquad \text{for } k \le N_0,
\end{equation}
where $\tau^k_i$ are bounded stopping times and $g^{k_i}(x)\in
 C^{\infty}_0 (\bR^d)$. Define
$$
v(t,x):=\sum_{k=1}^{N_0} \int^t_0
 g^k(s, x) dw^k_s
=\sum_{k=1}^{N_0}\sum_{i=1}^{m_k} g^{k_i}(x)(w^k_{t\wedge
\tau^k_{i+1}}-w^k_{t\wedge \tau^k_{i}})
$$
and
\begin{equation}
                    \label{eqn 03-22-1}
u(t,x):=v(t,x)+\int^t_0 \phi(\Delta)T_{t-s} v (s,x)\,
ds=v(t,x)+\int^t_0 T_{t-s}\phi(\Delta)v (s,x)\, ds.
\end{equation}
Then $u-v=\int^t_0 T_{t-s}\phi(\Delta)v (s,x)ds$, and
therefore (see (\ref{eqn 1.3.5})) we have
$$(u-v)_t=\phi(\Delta)(u-v)+ \phi(\Delta)v=\phi(\Delta)u,
$$
and
$$
du=d(u-v) +dv=\phi(\Delta) u dt + \sum_{k=1}^{N_0} g^kdw^k_t.
$$
Also by (\ref{eqn 03-22-1}) and stochastic Fubini theorem (\cite[Theorem 64]{P}), almost
surely,
\begin{eqnarray*}
u(t,x)&=&v(t,x)+\sum_{k=1}^{N_0}\int^t_0 \int^s_0
\phi(\Delta)T_{t-s}
g^k (r, x)dw^k_r ds \\
&=&v(t,x)-\sum_{k=1}^{N_0}\int^t_0\int^t_r\frac{\partial}{\partial
s}T_{t-s}g^k (r, x) ds dw^k_r \\
&=&\sum_{k=1}^{N_0} \int^t_0 T_{t-s}g^k (s,x) dw^k_s.
\end{eqnarray*}
Hence,
$$
\phi(\Delta)u(t,x)=\sum_{k=1}^{N_0}\int^t_0
\phi(\Delta)^{1/2}T_{t-s}\phi(\Delta)^{1/2}g^k (s,\cdot) (x)dw^k_s,
$$
and by
 Burkholder-Davis-Gundy's
inequality, we have
$$
\E \left[  \big|\phi(\Delta)u(t,x)\big|^p \right] \leq c(p)\E\left[
\left(\int^t_0 \sum_{k=1}^{N_0}
|\phi(\Delta)^{1/2}T_{t-s}\phi(\Delta)^{1/2}g^k(s,\cdot) (x)|^2ds\right)^{p/2}\right].
$$
Also, similarly we get
$$
\E \left[  \big|u(t,x)\big|^p \right] \leq c(p)\E\left[
\left(\int^t_0 \sum_{k=1}^{N_0}
|T_{t-s}g^k(s,\cdot) (x)|^2ds\right)^{p/2}\right].
$$
Now it is enough to  use Theorem \ref{main theorem} and Lemma \ref{lemma banach} to conclude
\begin{equation}
               \label{eqn 1.3.6}
\|u\|_{\bH^{\phi,2}_p(T)}\leq N\|g\|_{\bH^{\phi,1}_p(T,\ell_2)}.
\end{equation}

For general $g$, take a sequence $g_n$ so that $g_n \to g$ in $\bH^{\phi,1}_p(T,\ell_2)$ and  each $g_n$ satisfies above described conditions.
Then, by the above result, for $u_n:=\int^t_0 T_{t-s} g^k_n dw^k_s$, we have $du_n=\phi(\Delta)u_n dt +g^k_n dw^k_t$, and
$$
\|u_n\|_{\bH^{\phi,2}_p(T)}\leq N\|g_n\|_{\bH^{\phi,1}_p(T,\ell_2)},
$$
$$
\|u_n-u_m\|_{\bH^{\phi,2}_p(T)}\leq N\|g_n-g_m\|_{\bH^{\phi,1}_p(T,\ell_2)}.
$$
Thus $u_n$ is a Cauchy sequence in $\cH^{\phi,2}_p(T)$ and converges to a certain function $u\in  \cH^{\phi,2}_2(T)$.
One easily gets (\ref{eqn 1.3.6}) by letting $n\to \infty$, and by Theorem  \ref{thm banach} it also follows $\E \sup_{t\leq T}\|u_n-u_m\|^p_{L_p} \to 0$ as $n,m\to \infty$. Finally by taking the limit from
$$
(u_n(t), \varphi)=\int^t_0 (\phi(\Delta)u_n, \varphi)ds+\sum_k \int^t_0 (g^k_n,\varphi)dw^k_s, \quad \forall \,t\leq T\, (a.s.)
$$
and remembering  $\E \sup_{t\leq T}\|u_n-u\|^p_{L_p} \to 0$, we prove that $u$ satisfies
$$
(u(t), \varphi)=\int^t_0 (\phi(\Delta)u, \varphi)ds+\sum_k \int^t_0 (g^k,\varphi)dw^k_s, \quad \forall \,t\leq T\, (a.s.)
$$

{\bf{Step 4}}. General case. The uniqueness follows from Step 1. For the existence and the estimate it is enough to add the solutions in Steps 2 and 3.
The theorem is proved.

\end{proof}

\end{document}